\newtheorem{theorem}{Theorem}[section]
\newtheorem{proposition}[theorem]{Proposition}
\newtheorem{lemma}[theorem]{Lemma}
\newtheorem{corollary}[theorem]{Corollary}
\theoremstyle{definition}
\newtheorem{definition}[theorem]{Definition}
\newtheorem{notation}[theorem]{Notation}
\newtheorem{example}[theorem]{Example}
\newtheorem{observation}[theorem]{Observation}
\newtheorem{remark}[theorem]{Remark}
\numberwithin{equation}{section}
\newcounter{alphcount}
\newenvironment{alphlist}%
{\begin{list}
{(\alph{alphcount})}
{\usecounter{alphcount} \setlength{\labelwidth}{3em}}}%
{\end{list}}
\newcounter{numcount}
\newenvironment{numlist}%
{\begin{list}
{(\arabic{numcount})}
{\usecounter{numcount} \setlength{\labelwidth}{3em}}}%
{\end{list}}
\newenvironment{lnumlist}%
{\begin{list}
{(\arabic{numcount})\hfill}
{\usecounter{numcount} \setlength{\labelwidth}{2em}}}%
{\end{list}}
\newcounter{romcount}
\newenvironment{romlist}%
{\begin{list}
{(\roman{romcount})}
{\usecounter{romcount} \setlength{\labelwidth}{3em}}}%
{\end{list}}
\newcommand\caa{\mathcal {A}}
\newcommand\cbb{\mathcal {B}}
\newcommand\ccc{\mathcal {C}}
\newcommand\cff{\mathcal {F}}
\newcommand\cee{\mathcal {E}}
\newcommand\cgg{\mathcal {G}}
\newcommand\chh{\mathcal {H}}
\newcommand\ckk{\mathcal {K}}
\newcommand\cll{\mathcal {L}}
\newcommand\cnn{\mathcal {N}}
\newcommand\crr{\mathcal {R}}
\newcommand\cww{\mathcal {W}}
\newcommand\cxx{\mathcal {X}}
\let\ideal=\mathfrak
\let\df=\textbf
\let\iso=\cong
\let\adjoint=\dashv
\let\tensor=\otimes
\let\union=\cup
\let\intersection=\cap
\let\Union=\bigcup
\let\htop=\sim
\let\htopt=\approx
\let\inclusion=\hookrightarrow
\DeclareMathOperator\image{im}
\DeclareMathOperator\dom{dom}
\DeclareMathOperator\cod{cod}
\DeclareMathOperator\colim{colim}
\DeclareMathOperator\cell{cell}
\DeclareMathOperator\Hom{Hom}
\newcommand\two{\mathbf {2}}
\newcommand\Set{\mathbf {Set}}
\newcommand\Cat{\mathbf {Cat}}
\newcommand\Mod{\mathbf {Mod}}
\newcommand\Ord{\mathbf {Ord}}
\newcommand\PrOrd{\mathbf {PrOrd}}
\newcommand\rsRel{\mathbf {rsRel}}
\newcommand\eqRel{\mathbf {eqRel}}
\newcommand\Mono{\mathrm {Mono}}
\newcommand\StrEpi{\mathrm {StrEpi}}
\newcommand\cyl{\mathrm {C}}
\newcommand\Ob{\mathrm {Ob}}
\newcommand\komma[2]{(#1{\downarrow} #2)}
\newcommand\follows{\;\Longrightarrow \;}
\newcommand\boxrel{\mathrel{\Box}}
\newcommand\lbox[1]{{}^\Box #1}
\newcommand\rbox[1]{#1^\Box}
\newcommand\natarrow{\overset{\cdot}{\rightarrow}}
\newcommand\map[3]{{#1\colon#2\rightarrow #3}}
\newcommand\natmap[3]{{#1\colon#2\natarrow #3}}
\newcommand\pushout[3]{#1\mathop{+}\limits_{#2}^{\phantom{#2}} #3}
\newcommand\pullback[3]{#1\mathop{\times}\limits_{#2}^{\phantom{#2}} #3}
\def\edge[#1]{\ar@{-}@<-0.2pt>[#1] \ar@{-}[#1] \ar@{-}@<+0.2pt>[#1]}
\begin{document}

% ---------------------- inserting blank first page for 2up
% \thispagestyle{empty}
% \quad\newpage
% \setcounter{page}{1}
% ---------------------- 

\title[Model structures for locally presentable categories]%
{Left determined model structures for locally presentable categories}

\author{Marc Olschok}
\address{%
Department of Mathematics\\
Masaryk University\\
Kotl\'a\v{r}sk\'a 2\\
611~37~Brno\\
Czech Republic}
%\curraddr{}
\email{xolschok@math.muni.cz}
%\thanks{}

\subjclass[2000]{18C35, 18G55, 55U35}

\keywords{weak factorization system, Quillen model category, homotopy}

\date{\today}

%\dedicatory{}

\begin{abstract}
We extend a result of Cisinski on the construction of
cofibrantly generated model structures from (Grothendieck) toposes
to locally presentable categories and from monomorphism to more
general cofibrations.
As in the original case, under additional conditions, the resulting model
structures are ''left determined'' in the sense of Rosick\'y and Tholen.
\end{abstract}

\maketitle

\section{Introduction}

\nobreak\noindent
Given a Quillen model structure on a category, any two of the
three classes of maps involved (cofibrations, fibrations
and weak equivalences) determine the remaining one and hence
the whole model structure. Going one step further, one can ask
for model structures where already one of the classes
determine the other two.

Rosick\'y and Tholen~\cite{ROTH03} introduced the notion of
a left determined model category, where the class $\cww$ of
weak equivalences is determined by the class $\ccc$ of cofibrations
as the smallest class of maps satisfying some closure conditions.
For such a model category, $\cww$ is then the smallest possible
class of weak equivalences for which $\ccc$ and $\cww$ yield a
model structure.

Independently, Cisinski~\cite{CISI02} considered classes of maps
(under the name localizer) that satisfy (almost) the same closure
conditions for the case where the underlying category is a
(Grothendieck) topos and $\ccc$ is the class of monomorphisms.
Moreover, he gave an explicit construction of model structures
for this case, and showed that under suitable conditions the resulting
class of weak equivalences is a smallest localizer (w.r.t.~monomorphisms).
This model structure is then left determined.

Our aim is to extend this construction and the corresponding results
to a more general context, where the class of cofibrations may not be
the monomorphisms and where the underlying category is not necessarily
a topos. The necessary assumptions for such a generalization to work
fall into three sorts:

\begin{itemize}
\item general conditions on the underlying category. We assume, that the
underlying category is locally presentable. Since every Grothendieck
topos is locally presentable, this will include the original examples.

\item conditions on the class of cofibrations in spe.
We assume, that these are already part of a cofibrantly generated
weak factorization system and that every object is cofibrant.

\item conditions on the cylinder used for the construction.
These will be discussed later.
\end{itemize}

The remaining sections of this paper are as follows: Section~2
contains the needed definitions and facts about accessible categories,
weak factorization systems and model structures (mostly without proofs).
In Section~3 we show that, given a cofibrantly generated weak
factorization system together with a cylinder satisfying suitable
assumptions, Cisinski's construction produces a cofibrantly generated model
structure. In Section~4 we compare the weak equivalences produced by
that construction with smallest localizers and identify conditions under
which these coincide. Finally, Section~5 contains some well known examples
in order to illustrate the construction. For the case of module categories
we also describe the used cylinders in terms of pure submodules.

Notation is almost standard; but we write composition in reading order
and denote identity morphisms by the name of their objects.

\section{Accessible categories and model structures}

\nobreak\noindent
We first turn to accessible and locally presentable categories.
The main source for this material is the book
of Ad\'amek and Rosick\'y~\cite{ADRO94}.

\begin{definition} 
Let $\lambda$ be a regular cardinal.
\begin{alphlist}
\item
an object $X$ in a category $\ckk$ is \df{$\lambda$-presentable}
if the functor $\map{\ckk(X,-)}\ckk\Set$ preserves
$\lambda$-directed colimits.

\item
A category $\ckk$ is \df{$\lambda$-accessible} if it satisfies
the following two conditions:
\begin{numlist}
\item
$\ckk$ has $\lambda$-directed colimits.

\item
there is a set $\caa$ of $\lambda$-presentable objects of $\ckk$ such
that every object of $\ckk$ is a $\lambda$-directed colimit of objects
from $\caa$.
\end{numlist}
It is \df{accessible} if it is $\lambda$-accessible for
some regular cardinal $\lambda$.

\item
A category $\ckk$ is \df{$\lambda$-locally presentable} if it is
$\lambda$-accessible and cocomplete. It then follows that it is
also complete, see e.g.~\cite{ADRO94}*{Corollary~1.28}.
It is \df{locally presentable} if it is $\lambda$-presentable for
some regular cardinal $\lambda$.

\item
A functor $\map F\ckk\cll$ is $\lambda$-accessible if both
$\ckk$ and $\cll$ are $\lambda$-accessible and $F$ preserves
$\lambda$-directed colimits.
It is \df{accessible} if it is $\lambda$-accessible for
some regular cardinal $\lambda$.

\item
A full subcategory $\ckk$ of $\cll$ is \df{accessibly embedded}
if it is closed under $\lambda$-directed colimits for some
regular cardinal $\lambda$.
\end{alphlist}
\end{definition}

\begin{notation}
Let $\map F\caa\cbb$ be any functor.
We write $F\caa$ for the \df{full image} of $\caa$ under $F$,
i.e.~the full subcategory of $\cbb$ determined by all
objects $FX$ ($X\in\caa)$.
If $\ckk$ is a full subcategory of $\cbb$, we write
$F^{-1}\ckk$ for its \df{full preimage} under $F$,
i.e.~the full subcategory of $\caa$ determined by all
those objects $X\in\caa$ with $FX\in \ckk$.
\end{notation}

\begin{lemma}
\label{lem:preimage_of_replete_reductive}
Let $\map F\caa\ccc$ be an accessible functor and let $\ckk$ be
a full subcategory of $\ccc$.
\begin{alphlist}

\item
If $\ckk$ is accessible and accessibly embedded in $\ccc$ then
$F^{-1}\ckk$ is also accessible and accessibly embedded in $\caa$.

\item
If $\ckk$ is the full image of an accessible functor and also
isomorph\-ism-closed in $\ccc$ then the same holds for $F^{-1}\ckk$.
\end{alphlist}
\end{lemma}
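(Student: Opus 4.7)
The plan is to realize $F^{-1}\ckk$ (up to equivalence) as a pseudo-pullback in the 2-category of accessible categories and accessible functors, then invoke the standard result that this 2-category is closed under pseudo-pullbacks (see \cite{ADRO94}, around Theorem~2.43 and the surrounding material on 2-limits of accessible categories). In both parts the pseudo-pullback $P$ will consist of triples $(A,K,\phi)$ with $A\in\caa$, $K\in\ckk$, and $\phi\colon FA\iso K$, and the projection $\pi\colon P\to\caa$ will automatically be accessible; the technical task in each case is to identify the (full) image of $\pi$ with $F^{-1}\ckk$ itself, rather than with a strictly larger subcategory of $\caa$.

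For part~(a), I would first note that the fully faithful inclusion $\ckk\inclusion\ccc$ is accessible, because $\ckk$ is accessible and is closed under $\lambda$-directed colimits in $\ccc$ for some $\lambda$. Forming the pseudo-pullback $P$ of $F$ with this inclusion then gives an accessible category and an accessible projection $\pi\colon P\to\caa$. Fullness of $\ckk\inclusion\ccc$ forces a morphism in $P$ to be determined uniquely by its $\caa$-component, so $\pi$ is fully faithful; its essential image is the isomorph\-ism-closure $\widetilde{F^{-1}\ckk}$ of $F^{-1}\ckk$ in $\caa$. Hence $P\simeq\widetilde{F^{-1}\ckk}$ is accessible, and since $F^{-1}\ckk\inclusion\widetilde{F^{-1}\ckk}$ is tautologically an equivalence, $F^{-1}\ckk$ is accessible as well. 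For the accessible-embedding statement, I would pick a regular cardinal $\mu$ large enough that $F$ preserves $\mu$-directed colimits and $\ckk$ is closed under $\mu$-directed colimits in $\ccc$; then for any $\mu$-directed $D$ in $F^{-1}\ckk$ with $\caa$-colimit $A$, one has $FA=\colim FD$ in $\ccc$, which is a $\mu$-directed colimit of a diagram in $\ckk$, hence lies in $\ckk$.

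For part~(b), isomorph\-ism-closure of $F^{-1}\ckk$ in $\caa$ is immediate: an isomorphism $A\iso A'$ in $\caa$ induces $FA\iso FA'$, and isomorph\-ism-closure of $\ckk$ transfers the membership $FA\in\ckk$ to $FA'\in\ckk$. Writing $\ckk=H\cbb$ for some accessible $H\colon\cbb\to\ccc$, I would again form the pseudo-pullback $P$ of $F$ with $H$, obtaining an accessible projection $\pi\colon P\to\caa$. Its full image consists of exactly those $A\in\caa$ for which $FA\iso HB$ for some $B\in\cbb$, which is the isomorph\-ism-closure of $F^{-1}\ckk$; by the preceding observation this closure equals $F^{-1}\ckk$ itself, exhibiting $F^{-1}\ckk$ as the full image of the accessible functor $\pi$.

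I expect the only real subtlety to be conceptual: a pseudo-pullback naturally produces an iso-comma object whose objects carry a specified isomorphism, so it cuts out the isomorph\-ism-closure of $F^{-1}\ckk$ rather than $F^{-1}\ckk$ on the nose. The two distinct hypotheses of the lemma -- fullness of $\ckk$ in $\ccc$ in~(a), and isomorph\-ism-closure of $\ckk$ in~(b) -- are precisely what is needed to close this gap in each case. The remaining cardinal bookkeeping for the accessible-embedding part of~(a) is then routine.
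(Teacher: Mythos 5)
Your argument is correct and lands very close to the paper's, but it is organized differently. For part (a) the paper gives no proof at all: it simply cites \cite{ADRO94}*{Remark~2.50}, whereas you reprove the statement via the pseudo-pullback of $F$ along the (accessible, fully faithful) inclusion $\ckk\inclusion\ccc$, plus a direct cardinal argument for the accessible embedding; that is a perfectly good self-contained substitute. For part (b) the two proofs manufacture the same category: your pseudo-pullback of $F$ and $H$ is exactly the paper's $\mathrm{Iso}(F,G)$. The difference is how its accessibility is obtained. The paper first forms the comma category $\komma FG$, accessible with accessible projection to $\caa$ by \cite{ADRO94}*{Theorem~2.43}, then realizes $\mathrm{Iso}(F,G)$ as the preimage, under the induced functor $H\colon\komma FG\to\ccc^\two$, of the (accessible, accessibly embedded) subcategory of isomorphisms in $\ccc^\two$, so that part (a) applies; you instead invoke closure of accessible categories under pseudo-pullbacks directly. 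Your route is shorter and does not route (b) through (a); the paper's route uses only comma categories together with part (a), i.e.\ exactly the ingredients it has already put on the table, and makes the dependence of (b) on (a) explicit. Both proofs finish identically: isomorphism-closedness of $\ckk$ is what makes the full image of the projection equal to $F^{-1}\ckk$ on the nose, and it also transfers isomorphism-closedness to $F^{-1}\ckk$.

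One small point of care in your part (a): fullness of $\ckk$ in $\ccc$ does give full faithfulness of $\pi$, but identifying the essential image of $\pi$ with the isomorphism-closure of $F^{-1}\ckk$ in $\caa$ is not a consequence of fullness alone --- a priori the pseudo-pullback picks out every $A$ with $FA$ merely isomorphic to some object of $\ckk$, and such an $A$ need not be isomorphic in $\caa$ to any object of $F^{-1}\ckk$. What closes this gap is that $\ckk$, being closed under $\lambda$-directed colimits in $\ccc$ (read as in \cite{ADRO94}), is in particular closed under isomorphisms; then the essential image of $\pi$ is $F^{-1}\ckk$ itself and your equivalence step becomes immediate. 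It is worth saying this explicitly rather than attributing it to fullness, since fullness and repleteness play separate roles there.
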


\begin{proof}
Part (a) is \cite{ADRO94}*{Remark~2.50}.
For part (b), let $\map G\cbb\ccc$ be an accessible functor with
$\ckk=G\cbb$.
\begin{numlist}
\item
The comma category $\komma FG$ is accessible and the projection
$\komma FG \rightarrow \caa$ is accessible by \cite{ADRO94}*{Theorem~2.43}.

\item
$F$ and $G$ induce an accessible functor $\map H{\komma FG}{\ccc^\two}$ via
\newline
$H(A,B,\map u{FA}{GB}) = u$.
Since the full subcategory of $\ccc^\two$ given by isomorphisms
is accessible and accessibly embedded in $\ccc^\two$, the same
holds for its preimage under $H$, by part (a). This preimage
is the full subcategory $\mathrm{Iso}(F,G)$ of $\komma FG$ whose objects
are those $(A,B,\map u{FA}{GB})$ for which $u$ is an isomorphism.

\item
$F^{-1}(G\cbb)$ is the full image of the composite
\newline
$\mathrm{Iso}(F,G) \inclusion \komma FG \rightarrow \caa$.\qedhere
\end{numlist}
\end{proof}

We now turn to model structures. We follow Ad\'amek, Herrlich, Rosick\'y,
Tholen~\cite{AHRT02b} in introducing these via the notion of a weak
factorization system. Other sources include the article of Beke~ \cite{BEKE00}
and the books of Hirschhorn~\cite{HIRS03} and Hovey~\cite{HOVE99}.
Most definitions do not need the underlying category to be complete
and cocomplete as is usually assumed when working with model structures.
For now we tacitly assume that the relevant limits and colimits exist
for the various statements to make sense.

\begin{notation}
For two maps $f$ and $g$ in a category $\ckk$ we write
$f\boxrel g$ if for every solid square
$$
\xymatrix{
{\bullet} \ar[r] \ar[d]_f & {\bullet} \ar[d]^g \\
{\bullet} \ar[r] \ar@{.>}[ru] & {\bullet}
}
$$
the (dotted) diagonal exists. For a class $\chh$ of maps we set
\newline
$\rbox\chh = \{ g\in\ckk \mid \forall h\in\chh\colon h\boxrel g \}$
and
$\lbox\chh = \{ f\in\ckk \mid \forall h\in\chh\colon f\boxrel h \}$.
\end{notation}

\begin{remark}
\begin{lnumlist}
\item \hspace*{-9pt}Any class of the form  $\lbox\chh$ is
stable under pushouts, retracts in $\ckk^\two$ and transfinite
compositions of smooth chains,
where a smooth chain is a colimit preserving functor $\map D\alpha\ckk$
from some ordinal and its transfinite composition is the induced map
from $D_0$ to $\colim_{\beta<\alpha}D_{\beta}$.
The dual results hold for classes of the form $\rbox\chh$.
We write $\cell(\chh)$ for the class of those maps that are
transfinite compositions of pushouts of maps from $\chh$.
Hence the above observation in particular gives
$\cell(\chh) \subseteq \lbox{(\rbox\chh)}$.
\end{lnumlist}
\begin{numlist}
\setcounter{numcount}{1}
\item
Suppose $f=xy$.
If $f\boxrel y$, then by redrawing
$$
\xymatrix{
{\bullet} \ar[r]^x \ar[d]_f & {\bullet} \ar[d]^y \\
{\bullet} \ar@{=}[r] \ar[ru]^d & {\bullet}
}
\xymatrix{
{\quad\quad\quad}\ar@{}[d]|{\mbox{as}}\\ { }
}
\xymatrix{
{\bullet} \ar[d]_f \ar@{=}[r]
& {\bullet} \ar[d]^x \ar@{=}[r] & {\bullet} \ar[d]^f \\
{\bullet} \ar[r]_d & {\bullet} \ar[r]_y & {\bullet}
}
$$
one obtains $f$ as a retract of $x$.
Dually, if $x\boxrel f$ then $f$ is a retract of $y$.

\item
The relation $\Box$ gives a Galois-connection on classes of maps,
i.e.~one always has $\cll\subseteq\lbox\crr \iff \rbox\cll\supseteq\crr$.
\end{numlist}
\end{remark}

\begin{definition}
\label{def:wfs}
A \df{weak factorization system} in a category $\ckk$
is a pair $(\cll,\crr)$ of classes of maps such that the
following two conditions are satisfied:
\begin{numlist}
\item
$\cll=\lbox\crr$ and $\rbox\cll=\crr$.

\item
Every map $f$ has a factorization as $f=\ell r$ with
$\ell\in\cll$ and $r\in\crr$.
\end{numlist}
The weak factorization system $(\cll,\crr)$ is \df{cofibrantly generated}
if $\cll=\lbox{(\rbox I)}$ for some subset $I\subseteq\cll$.
It is \df{functorial} if there is a functor $\map F{\ckk^\two}\ckk$
together with natural maps $\natmap \lambda\dom{F}$ and $\natmap \rho F\cod$
such that $\lambda_f\in\cll$, $\rho_f\in\crr$ and $f=\lambda_f\rho_f$
for all $f\in\ckk^\two$.
\end{definition}

\begin{definition}
\label{def:mods}
A \df{model structure} $(\ccc,\cww,\cff)$ on a category
$\ckk$ consists of three classes $\ccc$ (cofibrations),
$\cff$ (fibrations) and $\cww$ (weak equivalences) such that
the following conditions are satisfied:
\begin{numlist}
\item
$\cww$ is closed under retracts in $\ckk^\two$ and has the
\df{2-3 property}: if in $f=gh$ two of the maps
lie in $\cww$ then so does the third.

\item
Both $(\ccc,\cww\intersection\cff)$
and $(\ccc\intersection\cww,\cff)$
are weak factorization systems.
\end{numlist}
The classes $\ccc\intersection\cww$ and $\cww\intersection\cff$
are called trivial cofibrations and trivial fibrations respectively.
The model structure is \df{cofibrantly generated} or \df{functorial}
if the two weak factorization systems in (2) are.
An object $X$ is called $\df{cofibrant}$ if the map $(0\to X)$
from the initial object is a cofibration and $\df{fibrant}$ if
the map $(X\to 1)$ to the terminal object is a fibration.
For a functorial model structure, one obtains the
\df{cofibrant replacement functor} and the
\df{fibrant replacement functor} by restricting the two functorial
factorizations to $\komma0{\ckk}$ and $\komma{\ckk}1$ respectively.
\end{definition}

\begin{remark}
\label{rem:trivial}
Any weak factorization system $(\cll,\crr)$ in $\ckk$ gives a model
structure with $\ccc=\cll$, $\cff=\crr$ and $\cww=\ckk$ for which
Definition~\ref{def:wfs} and Definition~\ref{def:mods} produce the same
notions of ''cofibrantly generated'' and ''functorial''.
Any notion about model structures in general
(like e.g.~''(co)fibrant objects'' or ''(co)fibrant replacement functor''
from above) can be applied to weak factorization systems by considering
this special model structure.
\end{remark}

\begin{definition}
\label{def:htop}
Let $(\ccc,\cww,\cff)$ be a model structure in a category $\ckk$.
\begin{alphlist}
\item
For an object $X$, a \df{cylinder object} $\cyl X$ for $X$ is
given by a $(\ccc,\cww)$-factorization of the codiagonal
$\map {(X|X)}{X+X}X$ as
$\xymatrix{ X+X \ar[r]|-{\gamma_X} & {\cyl X} \ar[r]|-{\sigma_X}& X}$.
The cylinder object $\cyl X$ is \df{final}
if $\sigma_X\in \rbox\ccc$.
Given two cylinder objects $\cyl X$ and $\cyl' X$, we call
$\cyl' X$ \df{finer than} $\cyl X$ if there is a
$\map {\varphi_X}{\cyl X}{\cyl' X}$ making the following
diagram commutative:
$$
\xymatrix{
& X+X \ar[ld]_{\gamma_X} \ar[rd]^{\gamma'_X} & \\
{\cyl X} \ar[rr]_{\varphi_X} \ar[rd]_{\sigma_X}
& &
{\cyl' X} \ar[ld]^{\sigma'_X} \\
& X &
}
$$

\item
A \df{(functorial) cylinder} $(\cyl,\gamma,\sigma)$ is a functor
$\map\cyl\ckk\ckk$ together with natural maps $\gamma$ and $\sigma$
whose $X$-components make $\cyl X$ into a cylinder object as in (a).
Together with the (natural) coproduct inclusions one then obtains
natural maps with $X$-components as in the diagram below:
\begin{equation}
\label{dia:cyl}
\xymatrix{
X \ar[r]^-{\iota^0_X} \ar[dr]|{\gamma^0_X} \ar@{=}[ddr]
& X+X \ar[d]|{\gamma_X}
& X \ar[l]_-{\iota^1_X} \ar[dl]|{\gamma^1_X} \ar@{=}[ddl]
\\
& {\cyl X} \ar[d]|{\sigma_X} & \\
& X & 
}
\end{equation}
The cylinder is \df{final} if all $\sigma_X$ are in $\rbox\ccc$.
A cylinder $(\cyl',\gamma',\sigma')$ is \df{finer than}
$(\cyl,\gamma,\sigma)$ iff $\cyl X$ is finer than $\cyl' X$
for all $X$.

\item
Given a cylinder $(\cyl,\gamma,\sigma)$, two maps $\map {f,g}XY$ are
\df{homotopic} if the induced map
$\map {(f|g)}{X+X}Y$ from the coproduct factors through
$\map {\gamma_X}{X+X}{\cyl X}$. This will be written as
$f \htop g$ or sometimes as $f \htop g \pmod\cyl$.

\item
The symmetric transitive closure of $\htop$ is written as $\htopt$.
Since $\htop$ is reflexive and compatible with composition,
$\htopt$ is a congruence relation.
The quotient category will be denoted by $\ckk/{\htopt}$.
A map $\map fXY$ is a \df{homotopy equivalence}, if its
image in $\ckk/{\htopt}$ is an isomorphism, or equivalently,
if there exists a $\map gYX$ with $fg\htopt X$ and $gf\htopt Y$.
\end{alphlist}

\noindent
For a weak factorization system $(\cll,\crr)$, cylinder objects,
functorial cylinders and homotopy are defined as those for the
trivial model structure $(\cll,\ckk,\crr)$.
\end{definition}

\begin{observation}
\label{obs:cylinders}
\label{rem:coinj_implies_cof} % (a)
Let $(\cll,\crr)$ be a weak factorization system (similar observations
apply to model structures).
\begin{alphlist}

\item
Suppose that in part (a) of Definition~\ref{def:htop} the object
$X$ is cofibrant. Then the coproduct injections $\iota^0_X$ and $\iota^1_X$
are in $\cll$, being pushouts of the map $(0\to X)$.
Consequently, not only $\gamma_X$, but also $\gamma^0_X$ and
$\gamma^1_X$ are in $\cll$.

\item
The $(\cll,\crr)$-factorizations of codiagonals provide enough final
cylinder objects and every cylinder object $\cyl X$ can be refined
to a final one by a $(\cll,\crr)$-factorization of $\map {\sigma_X}{\cyl X}X$.
Also every final cylinder object is a finest one: if a $\cyl X$ is final and
$\cyl' X$ is any other cylinder object, then $\gamma'_X\boxrel\sigma_X$
will give a diagonal in
$$
\xymatrix{
{X+X} \ar[d]_{\gamma'_X} \ar[r]^{\gamma_X} & {\cyl X} \ar[d]^{\sigma_X} \\
{\cyl' X} \ar[r]_{\sigma'_X} \ar[ru] & X
}
$$
so that $\cyl X$ is finer than $\cyl' X$.

\item
Suppose that $(\cll,\crr)$ is functorial. Then one always has enough
final cylinders and every cylinder $(\cyl,\gamma,\sigma)$ can be refined
to a final cylinder by a functorial factorization of $\sigma$.

\item
If $(\cyl',\gamma',\sigma')$ is finer than $(\cyl,\gamma,\sigma)$
then the implication
$$ f\htop g \pmod{\cyl'} \follows  f\htop g \pmod\cyl $$
holds for any two maps $\map {f,g}XY$.
In particular any two final cylinders determine the same homotopy relation.
\end{alphlist}
\end{observation}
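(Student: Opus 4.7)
The plan is to handle each of the four parts using only (i) closure of $\cll$ under pushouts and composition, (ii) the lifting property $\cll\boxrel\crr$, and (iii) naturality of $\lambda,\rho$ in the functorial case.

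Parts (a) and (b) are purely object-wise. For (a), I would use that $\cll=\lbox\crr$ is closed under pushouts and finite composition (from the remark after the Galois connection): cofibrancy of $X$ gives $(0\to X)\in\cll$, each coproduct injection $\iota^i_X\colon X\to X+X$ is a pushout of $(0\to X)$ along itself and so in $\cll$, and $\gamma^i_X=\iota^i_X\gamma_X$ is then a composite of two $\cll$-maps. For (b), I would treat the three claims in turn. A $(\cll,\crr)$-factorization of the codiagonal $(X|X)\colon X+X\to X$ is by construction a final cylinder object on $X$. To refine an arbitrary $(\gamma_X,\sigma_X)$ to a final one, factor $\sigma_X$ as $\mu\in\cll$ followed by $\sigma'_X\in\crr$, set $\gamma'_X:=\gamma_X\mu$, and use $\mu\colon\cyl X\to\cyl'X$ itself as the comparison witnessing that $\cyl'X$ is finer than $\cyl X$ (the two required triangles are $\gamma_X\mu=\gamma'_X$ by construction and $\mu\sigma'_X=\sigma_X$ by the factorization). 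For the ``finest'' clause I would follow the displayed diagram: $\gamma'_X\in\cll$ and $\sigma_X\in\crr$ yield a diagonal $\cyl'X\to\cyl X$ whose two triangles are exactly the commutativities required of a comparison map, showing that $\cyl X$ is finer than $\cyl'X$.

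For (c), I would produce enough final functorial cylinders by applying the functorial factorization $F$ to the natural transformation $\nabla\colon(-)+(-)\to\mathrm{id}$; the resulting $\cyl=F\nabla$, $\gamma=\lambda_\nabla$, $\sigma=\rho_\nabla$ is a functorial cylinder by functoriality of $F$ and naturality of $\lambda,\rho$, and final because each $\rho_{\nabla_X}\in\crr$. To refine a given $(\cyl,\gamma,\sigma)$, I would apply $F$ to the natural transformation $\sigma$ itself, obtaining $\cyl'=F\sigma$, $\gamma'=\gamma\cdot\lambda_\sigma$ and $\sigma'=\rho_\sigma$, with $\lambda_\sigma$ a natural comparison. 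For (d), a $\cyl'$-homotopy $h'\colon\cyl'X\to Y$ between $f$ and $g$ composes with the comparison map $\varphi_X\colon\cyl X\to\cyl'X$ furnished by ``$\cyl'$ finer than $\cyl$'' to yield a $\cyl$-homotopy $\varphi_X h'$, since $\gamma_X\varphi_X h'=\gamma'_X h'=(f|g)$; the ``in particular'' clause then follows from (b), as any two final cylinders are mutually finer.

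The only real subtlety is bookkeeping the direction of the comparison map between cylinders: ``finer than'' is asymmetric, and one must identify which triangle lies in $\cll$ and which in $\crr$ when invoking the lifting property in (b), and in which direction $\varphi$ runs when transporting a homotopy in (d). Beyond that, the argument is just the closure properties of $\cll,\crr$ together with naturality of the functorial factorization.
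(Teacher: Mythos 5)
Your proposal is correct and follows essentially the same route as the paper, which presents these facts as an observation with the same inline justifications: pushout-and-composition closure of $\cll$ for (a), the lifting property $\gamma'_X\boxrel\sigma_X$ and refactoring of $\sigma_X$ for (b), the functorial factorization applied to the codiagonal and to $\sigma$ for (c), and composition with the comparison map for (d). You also correctly track the asymmetric direction of ``finer than'' throughout, which is the only place one could slip.
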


\begin{remark}
When functorial factorizations are not available, one
can still define homotopy as in Part (c) of Definition~\ref{def:htop}
for a nonfunctorial choice of cylinder objects $\cyl X$ without any
naturality condition on the maps $\gamma_X$ or $\sigma_X$.

One can also relax the definition by not fixing a choice for a cylinder
object: two maps $\map {f,g}XY$ are homotopic if $(f|g)$ factors through
some $\map {\gamma_X}{X+X}{\cyl X}$ of a cylinder object.
This is known as ''left homotopy''  in the literature on model categories
(see e.g.~\cite{HIRS03}*{Definition~7.3.2} or
\cite{HOVE99}*{Definition~1.2.4}).
But the resulting homotopy relation is not necessarily compatible
with precomposition.

An alternative approach is to use a fixed choice of final cylinder
objects. The existence of certain diagonals then works as a substitute
for the missing naturality. The homotopy relation with respect to such
a choice will always be symmetric and compatible with composition.
Moreover (by an argument as in Observation~\ref{obs:cylinders})
it does not depend on the choice of cylinder objects.
This approach was introduced by Kurz and Rosick\'y~\cite{KURO05}.

Since we will only meet situations where functorial factorizations are
available, we will not need this added generality.
\end{remark}

We now turn to weak factorization systems in locally presentable
categories. The following theorem should indicate, why these
categories are a convenient setting.

\begin{theorem}
\label{thm:lp_is_wfs} % (a)
\label{thm:funcfact_is_accessible} % (b)
\label{thm:heq_is_reductive} % (c)
Let $\ckk$ be a locally presentable category and $I$ a set of maps
in $\ckk$.
\begin{alphlist}
\item
Every map $f$ can be factored as $f=xy$ with $x\in\cell(I)$
and $y\in\rbox I$. Moreover this factorization can be made
functorial. In particular $(\lbox{(\rbox I)},\rbox I)$ is a
functorial factorization system.

\item
In the situation of (a), the factorization functor $\ckk^\two\to\ckk$
is accessible.

\item
The full subcategory of $\ckk^\two$ given by the homotopy equivalences
with respect to a final cylinder is the full image of an accessible
functor.

\end{alphlist}
\end{theorem}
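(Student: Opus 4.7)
For part (a), I would apply Quillen's small object argument in the locally presentable setting. Since $\ckk$ is $\mu$-locally presentable for some $\mu$, I would choose a regular $\lambda\geq\mu$ with every domain of a map in $I$ being $\lambda$-presentable. The standard $\lambda$-indexed transfinite iteration of the ``pushout of coproducts of generators over all lifting squares'' construction then factors every $f$ as $f=xy$ with $x\in\cell(I)\subseteq\lbox{(\rbox{I})}$ and $y\in\rbox{I}$, functorially in $f$. For part (b), the factorization functor $\ckk^\two\to\ckk$ so constructed is built from hom-functors out of the $\lambda$-presentable domains of maps in $I$, coproducts and pushouts of those maps, and $\lambda$-directed colimits; each operation preserves $\lambda$-directed colimits in the argument, so the composite factorization functor is $\lambda$-accessible.

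For part (c), the plan is first to apply (a) and (b) to $I$ and factor the codiagonals $(1_X|1_X)\colon X+X\to X$ functorially, yielding an accessible functorial final cylinder $(\cyl,\gamma,\sigma)$. By Observation~\ref{obs:cylinders}(d), the homotopy relation induced by any final cylinder agrees with that of this $\cyl$, so it suffices to treat this case. I would then consider the category $\cee$ whose objects are sextuples $(X,Y,f,g,H,K)$ consisting of maps $f\colon X\to Y$, $g\colon Y\to X$, $H\colon \cyl X\to X$ satisfying $\gamma^0_X H=fg$ and $\gamma^1_X H=1_X$, and $K\colon \cyl Y\to Y$ satisfying the analogous equations with $gf$ and $1_Y$, with morphisms being pairs $(u,v)$ compatible with all six pieces of data. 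The forgetful functor $U\colon\cee\to\ckk^\two$ sending $(X,Y,f,g,H,K)\mapsto f$ then has full image exactly the class of homotopy equivalences (for this final cylinder, symmetry and transitivity of $\htop$ can be verified by lifting against $\sigma_X$, so a single pair $(H,K)$ suffices to witness $\htopt$) and is plainly isomorphism-closed.

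The main obstacle will be showing $\cee$ is accessible; this would be done in stages. Starting from the functor category on the quiver $\bullet\rightleftarrows\bullet$, which is locally presentable and parameterizes the pairs $(f,g)$, I would adjoin $H$ as the $2$-pullback of the domain-codomain functor $\ckk^\two\to\ckk\times\ckk$ along the accessible functor $(X,Y,f,g)\mapsto(\cyl X,X)$ (accessible by (b)), and similarly adjoin $K$; such pullbacks of accessible categories along accessible functors are again accessible by standard results around \cite{ADRO94}*{Theorem~2.43}. Finally I would enforce each of the four equations ($\gamma^i_X H=\ldots$, $\gamma^i_Y K=\ldots$) as the preimage under an accessible functor into the functor category of parallel pairs of the ``diagonal'' subcategory of equal pairs; this diagonal is the image of the fully-faithful diagonal embedding $\ckk^\two\inclusion[\bullet\rightrightarrows\bullet,\ckk]$, hence accessible and accessibly embedded, so Lemma~\ref{lem:preimage_of_replete_reductive}(a) preserves accessibility at each stage. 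Since $U$ is then accessible, the full image of $U$ is a full subcategory of $\ckk^\two$ of the required form.
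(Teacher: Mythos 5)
Your parts (a) and (b) are essentially correct and reconstruct what the paper simply cites (\cite{BEKE00}*{Proposition~1.3} and \cite{ROSI05}*{Proposition~3.1}): the small object argument with a regular $\lambda$ for which the domains -- and, a small omission in your sketch, also the codomains -- of the maps in $I$ are $\lambda$-presentable gives the functorial $(\cell(I),\rbox I)$-factorization, and each stage, hence the $\lambda$-indexed composite, preserves $\lambda$-directed colimits. The genuine problem is in (c), in the identification of the full image of $\map U\cee{\ckk^\two}$ with the homotopy equivalences.

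By Definition~\ref{def:htop}(d) a homotopy equivalence is an isomorphism in $\ckk/{\htopt}$, where $\htopt$ is the symmetric \emph{transitive} closure of $\htop$, whereas your $\cee$ only records maps $f$ admitting $g$ with single one-step homotopies $fg\htop X$ and $gf\htop Y$. Your justification -- that transitivity of $\htop$ for a final cylinder follows ``by lifting against $\sigma_X$'' -- fails: lifting $\gamma_X$ against $\sigma_X$ does give symmetry (compose a lift of the twist map with a given homotopy), but concatenating two homotopies would require lifting $\gamma_X$ against the induced map $\cyl X+_X\cyl X\to X$, which need not lie in $\rbox{\cll}$. Indeed, the paper proves transitivity only on hom-sets into $\Lambda$-fibrant objects, using the cartesian structure (Lemma~\ref{lem:htop_is_trans}), and its remark on the Kurz--Rosick\'y approach claims only symmetry and compatibility with composition for final cylinders; in the generality of Theorem~\ref{thm:lp_is_wfs} no fibrancy or cartesianness is available. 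Concretely, in $\rsRel$ with the final cartesian cylinder $(-)\times K_2$ the paper notes that $\htop$ is not transitive; if $X$ is the reflexive symmetric path on four vertices, then $\map fX1$ is a homotopy equivalence (a zigzag of three homotopies contracts $X$ onto an inner vertex), but for no $\map g1X$ is $fg\htop X$ by a single homotopy, since that would force $g(\ast)$ to be adjacent to all four vertices; so $f$ lies outside the full image of your $U$. The accessibility machinery you set up is fine and the argument can be repaired: encode zigzags of homotopies of each finite length $(m,n)$ by categories $\cee_{m,n}$ (accessible by exactly your stage-wise construction) and use that a small coproduct of accessible categories is accessible, so that the induced functor $\coprod_{m,n}\cee_{m,n}\to\ckk^\two$ is accessible with full image the homotopy equivalences. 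This is in substance what the cited result \cite{ROSI07}*{Proposition~3.8} supplies; note that the paper itself proves all three parts of this theorem purely by citation.
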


\begin{proof}
Part (a) is shown e.g.~in \cite{BEKE00}*{Proposition~1.3}.
Part (b) is due to J.H.~Smith; for a published proof see
e.g.~Rosick\'y~\cite{ROSI05}*{Proposition~3.1}. The statements therein are
phrased for model structures but apply to weak factorization
systems via Remark~\ref{rem:trivial}.
Part (c) is \cite{ROSI07}*{Proposition~3.8}. 
\end{proof}

The last ingredient will be a theorem of Smith which describes
conditions under which two classes $\ccc$ and $\cww$ of maps
in a locally presentable category are part of a cofibrantly
generated model structure.

\begin{definition} A functor $\map F\caa\cbb$ satisfies
\begin{alphlist}
\item
the \df{solution set condition at an object} $B$ of $\cbb$
if there is a set of maps $\{\map {f_i}B{FA_i} \mid i\in I\}$
such that every map $\map fB{FA}$ factors as $f=f_i(Fu)$
for some $f_i$ and $\map u{A_i}{A}$.

\item
the \df{solution set condition at a class of objects}, if it satisfies
the solution set condition at every element of that class.

\item
the \df{solution set condition}, if it satisfies the
solution set condition at all objects of $\cbb$.
\end{alphlist}
\noindent
A full subcategory $\ckk$ of $\cbb$ satisfies the conditions above
if its inclusion functor does.
\end{definition}

\begin{lemma}
\label{lem:reductive_implies_solution-set}
Every accessible functor $\map F\ckk\cll$ (and hence its full
image) satisfies the solution set condition.
\end{lemma}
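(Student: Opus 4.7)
The plan is to exhibit, for each object $B\in\cll$, a small set of maps out of $B$ into the image of $F$ through which every map $B\to FA$ ($A\in\ckk$) factors. The central idea is that accessibility lets us replace an arbitrary target $A$ by a $\mu$-presentable piece of $A$, and the collection of all $\mu$-presentable objects forms a set up to isomorphism.

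First, by hypothesis $F$ is $\lambda$-accessible for some regular $\lambda$. I would enlarge $\lambda$ to a regular cardinal $\mu$ satisfying two things: (i) $B$ is $\mu$-presentable in $\cll$ and (ii) $F$ is still $\mu$-accessible. Both are possible: every object of an accessible category is $\mu$-presentable for all sufficiently large $\mu$, and a $\lambda$-accessible functor remains $\mu$-accessible for all $\mu$ sharply above $\lambda$ (both standard facts from \cite{ADRO94}). Once $\mu$ is fixed, the $\mu$-accessibility of $\ckk$ gives a set $\caa$ of representatives for the $\mu$-presentable objects of $\ckk$ up to isomorphism. I then take the proposed solution set to be
$$
S_B \;=\; \bigl\{\, f\colon B\to FA \,\bigm|\, A\in\caa \,\bigr\},
$$
which is a set because $\caa$ is a set and each $\cll(B,FA)$ is a set.

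To verify the factorization property, take an arbitrary $f\colon B\to FA$ with $A\in\ckk$. Since $\ckk$ is $\mu$-accessible, $A$ is a $\mu$-directed colimit $A=\colim A_j$ of objects $A_j\in\caa$, with colimit injections $u_j\colon A_j\to A$. Because $F$ is $\mu$-accessible, $FA=\colim FA_j$, and since $B$ is $\mu$-presentable, $f$ factors through some colimit injection: there is an index $j$ and a map $f_j\colon B\to FA_j$ with $f=f_j(Fu_j)$ (in reading-order composition). As $A_j$ is isomorphic to some representative in $\caa$, the map $f_j$ lies in $S_B$, giving the required factorization.

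The parenthetical claim about the full image is immediate from the above: the inclusion $F\ckk\hookrightarrow\cll$ satisfies the solution set condition at $B$ via the same $S_B$, because any map $B\to FA$ with $FA\in F\ckk$ factors as $f_j(Fu_j)$, and $Fu_j$ is in particular a morphism of $F\ckk$. I do not anticipate a main obstacle here; the only technical care required is the choice of $\mu$, which is a routine application of the cited accessibility results.
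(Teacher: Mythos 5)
Your argument is correct. Note that the paper offers no proof of this lemma at all: it simply cites \cite{ADRO94}*{Corollary~2.45}, so what you have written is a self-contained proof of the quoted result rather than a variant of an argument in the paper. Your route is the standard direct one: pick a regular $\mu$ sharply above $\lambda$ (so that $F$ stays $\mu$-accessible) and large enough that $B$ is $\mu$-presentable, take the canonical $\mu$-directed colimit presentation of $A$ by objects of the small family $\caa$, and use $\mu$-presentability of $B$ to factor $\map fB{FA}$ through some $FA_j$; this is essentially how Ad\'amek--Rosick\'y obtain the corollary (they package the same idea through accessibility of the comma category $\komma BF$, which yields a small dense family of objects $(B\to FA_i)$ serving as the solution set). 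The only genuinely delicate point is the choice of $\mu$ --- raising the accessibility index of $F$ needs the sharp inequality, not merely $\mu\geq\lambda$ --- and you flag this correctly; the rest (smallness of $\caa$ up to isomorphism, and the observation that fullness of $F\ckk$ makes $Fu_j$ a morphism of the full image) is routine and handled adequately.
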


\begin{proof}
\cite{ADRO94}*{Corollary~2.45}
\end{proof}

\begin{theorem}[Smith's Theorem]
\label{thm:smith}
Let $\ckk$ be a locally presentable category, $I$ a set of maps
and $\cww$ a class of maps in $\ckk$. Suppose that the following
conditions are satisfied:
\begin{numlist}

\item
$\cww$ has the 2-3 property and is closed under
retracts in $\ckk^\two$.

\item
$\rbox I \subseteq \cww$

\item
$\lbox{(\rbox I)} \intersection \cww$ is closed under pushouts
and transfinite composition.

\item
$\cww$ satisfies the solution set condition at $I$.
\end{numlist}

\noindent
Then setting
$\ccc:=\lbox{(\rbox I)}$ and $\cff:=\rbox{(\ccc\intersection\cww)}$
gives a cofibrantly generated model structure $(\ccc,\cww,\cff)$ on $\ckk$.
\end{theorem}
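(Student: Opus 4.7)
The plan is to produce two cofibrantly generated weak factorization systems in $\ckk$, with left classes $\ccc$ and $\ccc\intersection\cww$ respectively, which together with the identity $\rbox I=\cww\intersection\cff$ assemble into the desired model structure. First, Theorem~\ref{thm:lp_is_wfs}(a) applied to $I$ yields the functorial cofibrantly generated wfs $(\ccc,\rbox I)$. The identity $\cww\intersection\cff=\rbox I$ is then routine: hypothesis~(2) combined with $\ccc\intersection\cww\subseteq\ccc$ gives $\rbox I\subseteq\cww\intersection\cff$, and for the converse, a $(\ccc,\rbox I)$-factorization $f=ca$ of any $f\in\cww\intersection\cff$ has $a\in\rbox I\subseteq\cww$, forcing $c\in\ccc\intersection\cww$ by 2-3, so $c\boxrel f$ and the retract argument from the remark after Definition~\ref{def:wfs} exhibits $f$ as a retract of $a\in\rbox I$.

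The real work is producing a set $J\subseteq\ccc\intersection\cww$ with $\rbox J=\cff$. Here I would exploit the solution set condition at $I$: for each $\map i A B$ in $I$ there is a set $\{\map{(p_\alpha,q_\alpha)}{i}{w_\alpha}\}_\alpha$ of morphisms in $\ckk^\two$ with $w_\alpha\in\cww$ through which every such square with trivial-target factors. For each such datum, form the pushout $P_\alpha=\pushout B A{C_\alpha}$, obtaining $\map{j_\alpha}{C_\alpha}{P_\alpha}$ (a pushout of $i$, hence in $\ccc$) and the canonical $\map{k_\alpha}{P_\alpha}{D_\alpha}$ with $j_\alpha k_\alpha=w_\alpha$. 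A further $(\ccc,\rbox I)$-factorization $k_\alpha=c_\alpha r_\alpha$ produces $c_\alpha\in\ccc$ and $r_\alpha\in\rbox I\subseteq\cww$, so 2-3 applied to $w_\alpha=j_\alpha c_\alpha r_\alpha$ forces $j_\alpha c_\alpha\in\cww$. Setting $J:=\{j_\alpha c_\alpha\}_{i,\alpha}$ thus gives a set of maps in $\ccc\intersection\cww$.

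The crux, and what I expect to be the main obstacle, is verifying $\rbox J\intersection\cww=\rbox I$. One inclusion is immediate from $J\subseteq\ccc$ together with hypothesis~(2). For the other, take $g\in\rbox J\intersection\cww$ and a lifting problem against some $i\in I$. The solution set condition factors the representing square in $\ckk^\two$ through some $f_\alpha\colon i\to w_\alpha$, and the universal property of $P_\alpha$ together with the factorization $k_\alpha=c_\alpha r_\alpha$ organizes the remaining data into a square $j_\alpha c_\alpha\to g$ (with top the $C_\alpha$-component of the factored square and bottom given by post-composing with $r_\alpha$). A lift of this square exists because $g\in\rbox J$, and composing it with the pushout coprojection $B\to P_\alpha$ followed by $c_\alpha$ recovers a diagonal for the original square, showing $g\in\rbox I$.

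With this identity established, Theorem~\ref{thm:lp_is_wfs}(a) applied to $J$ gives a cofibrantly generated wfs $(\lbox{(\rbox J)},\rbox J)$. The inclusion $\lbox{(\rbox J)}\subseteq\ccc\intersection\cww$ follows from hypothesis~(3) together with the retract-closure of $\ccc$ (as left class of a wfs) and of $\cww$ (hypothesis~(1)); for the reverse, factor any $c\in\ccc\intersection\cww$ as $c=\ell r$ with $\ell\in\lbox{(\rbox J)}\subseteq\ccc\intersection\cww$ and $r\in\rbox J$, use 2-3 to conclude $r\in\rbox J\intersection\cww=\rbox I$, and observe that $c\boxrel r$ (since $c\in\ccc$) together with the retract argument exhibits $c$ as a retract of $\ell$, hence $c\in\lbox{(\rbox J)}$. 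The resulting wfs $(\ccc\intersection\cww,\cff)$, combined with $(\ccc,\rbox I)$ and the identity $\rbox I=\cww\intersection\cff$, delivers the cofibrantly generated model structure $(\ccc,\cww,\cff)$.
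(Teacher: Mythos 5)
Your argument is correct, and it is essentially the proof the paper delegates to: the paper proves Smith's Theorem only by citing Beke's Theorem~1.7, and your construction of the generating set $J$ of trivial cofibrations (pushing out each $i\in I$ along the solution-set maps into $\cww$, factoring the comparison map $k_\alpha$ by the small object argument, and then verifying $\rbox{J}\intersection\cww=\rbox{I}$ and $\lbox{(\rbox{J})}=\ccc\intersection\cww$ via the retract trick) is precisely that standard argument. The diagram chases you sketch (commutativity of the square $j_\alpha c_\alpha\to g$ and recovery of the diagonal through $B\to P_\alpha\to E_\alpha$) all check out.
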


\begin{proof}
\cite{BEKE00}*{Theorem~1.7}
\end{proof}

\begin{remark}
Conditions (1)--(3) in the above Theorem are necessary for any
cofibrantly generated model structure $(\ccc,\cww,\cff)$ with $I$
being the set of generating cofibrations.
Rosick\'y \cite{ROSI07}*{Theorem~4.3} has recently shown that
condition (4) is also necessary.
\end{remark}

\section{Cisinski's construction}

\nobreak\noindent
We now present the construction of a cofibrantly generated model
structure from a suitable cofibrantly generated weak factorization system
and cylinder. As in the original case, we need additional conditions on
the cylinder used. Our conditions in Definition~\ref{def:cart} are different
from those of Cisinski~\cite{CISI02}*{D\'efinition~2.3}.
Nevertheless, they are equivalent in the case of $(\Mono,\rbox\Mono)$
in a Grothendieck topos.

Before turning to the actual construction, we first look at
one particular ingredient in a more general setting.

\begin{definition}
\label{def:star}
Let $\caa$ be a category with pushouts. Given a natural map
$\alpha\colon F\natarrow F'\colon \cxx\rightarrow\caa$ and
a map $\map fXY$ let $f\star\alpha$ be the connecting map
in the diagram below:
$$
\xymatrix{
FX \ar[r]^{\alpha_X} \ar[d]_{Ff} & F'X \ar[d] \ar@/^/[ddr]^{F'f} & \\
FY \ar[r] \ar@/_/[drr]_{\alpha_Y} &
{\pushout{FY}{FX}{F'X}} \ar[dr]|-{f\star\alpha} & \\
 & & F'Y
}
$$
Dually, let $\cxx$ be a category with pullbacks. Given a natural map
$\beta\colon G'\natarrow G\colon \caa\rightarrow\cxx$ and
a map $\map gAB$ let $\beta\star g$ be the connecting map
in the diagram below:
$$
\xymatrix{
G'A
\ar@/_/[ddr]_{G'g} \ar[dr]|-{\beta\star g} \ar@/^/[rrd]^{\beta_A} && \\
& {\pullback{G'B}{GB}{GA}} \ar[r] \ar[d] & GA \ar[d]^{Gg}  \\
& G'B \ar[r]_{\beta_B}& GB
}
$$
For a class $I$ of maps,
we write $I\star\alpha$ for $\{f\star\alpha\mid f\in I\}$
and $\beta\star I$ for $\{\beta\star f\mid f\in I\}$.
\end{definition}

For the next Lemma, recall the notion of a conjugate pair of
natural maps between two adjunctions from e.g.~Mac~Lane\cite{MACL98}*{IV-7}:
given two adjunctions $F:\cxx\rightleftarrows\caa:G$ and
$F':\cxx\rightleftarrows\caa:G'$, two natural maps $\natmap\alpha{F}{F'}$
and $\natmap\beta{G'}{G}$ are conjugate if the diagram
$$
\xymatrix{
\caa(F'X,A) \ar[r]^{\iso} \ar[d]_{\caa(\alpha_X,A)}
& \cxx(X,G'A) \ar[d]^{\cxx(X,\beta_A)} \\
\caa(FX,A) \ar[r]^{\iso} & \cxx(X,GA)
}
$$
commutes for all $X\in\cxx$ and $A\in\caa$.

\begin{lemma}
\label{lem:conjugate}
Suppose $\natmap\alpha{F}{F'}$ and $\natmap\beta{G'}{G}$ are two
conjugate natural maps. Then for all $\map fXY$ and $\map gAB$
one has
$$ (f\star\alpha) \boxrel g \quad\iff\quad f \boxrel (\beta\star g) $$
\end{lemma}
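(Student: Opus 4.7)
The plan is to exhibit an explicit bijection between lifting problems on the two sides, sending diagonals to diagonals, by transposing each arrow across the adjunction under which it is naturally indexed. First, decode a commutative square
$$
\xymatrix{
{\pushout{FY}{FX}{F'X}} \ar[r] \ar[d]_{f\star\alpha} & A \ar[d]^g \\
F'Y \ar[r] & B
}
$$
into three maps $u\colon FY\to A$, $v\colon F'X\to A$, $w\colon F'Y\to B$ subject to the pushout compatibility $Ff\cdot u = \alpha_X\cdot v$ together with the two commutativities $u\cdot g = \alpha_Y\cdot w$ and $v\cdot g = F'f\cdot w$. Then transpose $u$ under $F\adjoint G$ to $\tilde u\colon Y\to GA$, and $v,w$ under $F'\adjoint G'$ to $\tilde v\colon X\to G'A$ and $\tilde w\colon Y\to G'B$.

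Next, I rewrite each of the three equations after transposition. Naturality of each adjunction immediately handles the parts that involve only $F$ (respectively only $F'$); the two genuinely mixed equations each contain a single $\alpha$-component, and it is precisely here that the conjugacy square for $(\alpha,\beta)$ is applied to replace $\alpha_X$ and $\alpha_Y$ by $\beta_A$ and $\beta_B$. A direct bookkeeping identifies the three transposed equations as exactly the data of a commutative square
$$
\xymatrix{
X \ar[r]^{\tilde v} \ar[d]_f & G'A \ar[d]^{\beta\star g} \\
Y \ar[r] & {\pullback{G'B}{GB}{GA}}
}
$$
with $\tilde w$ and $\tilde u$ serving as the two components of the bottom map into the pullback (the pullback compatibility $\tilde w\cdot\beta_B = \tilde u\cdot Gg$ being the transpose of the commutativity $u\cdot g = \alpha_Y\cdot w$).

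Finally, a candidate diagonal $d\colon F'Y\to A$ transposes under $F'\adjoint G'$ to $\tilde d\colon Y\to G'A$. Its upper-triangle conditions $\alpha_Y\cdot d = u$ and $F'f\cdot d = v$ become, after transposition (by conjugacy and by naturality, respectively), $\tilde d\cdot\beta_A = \tilde u$ and $f\cdot\tilde d = \tilde v$, while the lower triangle $d\cdot g = w$ transposes to $\tilde d\cdot G'g = \tilde w$; together these are precisely the conditions that $\tilde d$ be a diagonal for the right-hand square. Hence diagonals correspond bijectively and the equivalence follows. The main obstacle is purely bookkeeping---choosing consistently which adjunction to transpose each arrow under and orienting the conjugacy square in the correct direction---rather than anything conceptual; the content of the lemma is the observation that conjugacy of $(\alpha,\beta)$ is exactly what is needed to translate the mixed commutativities from one side to the other.
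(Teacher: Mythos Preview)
Your proof is correct and follows essentially the same approach as the paper: both transpose the data of a lifting square across the adjunctions, using the conjugacy of $(\alpha,\beta)$ to convert the commutativities involving $\alpha$-components into ones involving $\beta$-components. The only presentational difference is that you set up an explicit bijection between squares (and between their diagonals) on the two sides, whereas the paper proves the direction ``$\Rightarrow$'' by transposing one given square and its diagonal, and then invokes duality for the converse.
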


\begin{proof}
We will show the direction ''$\Rightarrow$''. The opposite direction
then follows by duality. So assume $(f\star\alpha) \boxrel g$ and
consider any diagram
$$
\xymatrix{
X \ar[r]^u \ar[d]_f & G'A \ar[d]^{\beta\star g} \ar[dr]^{\beta_A}& \\
Y \ar[r]_v \ar[dr]_{vp} & P \ar[d]^p \ar[r]_q & GA \ar[d]^{Gg} \\
& G'B \ar[r]_{\beta_B} & GB
}
$$
where $P$ is the pullback of $\beta_B$ and $Gg$.
We need a diagonal for the left upper square.
Switching via the adjunctions (indicated by $\widehat{(\phantom{-})}$ in both
directions) gives the solid arrows of the diagram
$$
\xymatrix{
FX \ar[r]^{\alpha_X} \ar[d]_{Ff} & F'X \ar[d]_j \ar[dr]^{\widehat{u}}& \\
FY \ar[r]^i \ar[dr]_{\alpha_Y} \ar@/_1pc/[rr]|-(0.7){\widehat{vq}}
 & Q \ar[d]|-{f\star\alpha} \ar@{.>}[r]^r
 & A \ar[d]^g \\
& F'Y \ar[r]_{\widehat{vp}} \ar@{.>}[ru]_d & B
}
$$
where $Q$ is the pushout of $Ff$ and $\alpha_X$.
Now $\map rQA$ is induced by $\widehat{vq}$ and $\widehat{u}$.
Testing against $i$ and $j$ yields the commutativity of the right
lower square (i.e. $rg=(f\star\alpha)\widehat{vp}$), which therefore
has a diagonal $\map d{F'Y}A$.
Switching back via the adjunction gives
$$
\xymatrix{
X \ar[r]^u \ar[d]_f & G'A \ar[d]^{\beta\star g} \ar[dr]^{\beta_A}& \\
Y \ar[r]_v \ar[dr]_{vp} \ar[ru]^{\widehat{d}}& P \ar[d]^p \ar[r]_q
 & GA \ar[d]^{Gg} \\
& G'B \ar[r]_{\beta_B} & GB
}
$$
where the equality $\widehat{d}(\beta\star g)=v$ can
be verified by testing against $p$ and $q$.
Hence $\map{\widehat{d}}Y{G'A}$ is the desired diagonal.
\end{proof}

\begin{corollary}
\label{cor:starstable}
In the situation of the previous Lemma, let $I$ be a class of maps in $\cxx$
and $J$ be a class of maps in $\caa$.
Then
$$
I\star\alpha \subseteq J
\follows
(\lbox{(\rbox{I})})\star\alpha
\subseteq \lbox{(\rbox{J})}
$$
\end{corollary}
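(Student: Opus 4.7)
The plan is to prove this purely formally by applying Lemma~\ref{lem:conjugate} twice, using the Galois connection $\cll \subseteq \lbox\crr \iff \rbox\cll \supseteq \crr$ to shuttle back and forth between $\star\alpha$ on the left and $\beta\star(-)$ on the right.

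First I would unfold the goal. To show $f\star\alpha\in\lbox{(\rbox J)}$ for a given $f\in\lbox{(\rbox I)}$, I need $(f\star\alpha)\boxrel g$ for every $g\in\rbox J$. By Lemma~\ref{lem:conjugate}, this is equivalent to $f\boxrel(\beta\star g)$. Since $f\in\lbox{(\rbox I)}$, it therefore suffices to establish the auxiliary inclusion
\begin{equation*}
  g\in\rbox J \follows \beta\star g\in\rbox I.
\end{equation*}

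Now I would prove this auxiliary inclusion, again using Lemma~\ref{lem:conjugate}. Fix $g\in\rbox J$. For any $i\in I$, Lemma~\ref{lem:conjugate} gives $i\boxrel(\beta\star g)$ iff $(i\star\alpha)\boxrel g$. But the hypothesis $I\star\alpha\subseteq J$ yields $i\star\alpha\in J$, and $g\in\rbox J$ then supplies the required lifting $(i\star\alpha)\boxrel g$. Hence $\beta\star g\in\rbox I$, which together with the previous paragraph completes the proof.

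No genuine obstacle arises here beyond keeping the two applications of the conjugate lifting correspondence straight: the argument is a double use of the Galois connection, with Lemma~\ref{lem:conjugate} acting as the bridge that converts each $\star\alpha$ on the lifting's left-hand side into a $\beta\star(-)$ on its right-hand side.
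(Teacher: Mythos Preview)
Your proof is correct and is essentially the same argument as the paper's: both use Lemma~\ref{lem:conjugate} twice together with the Galois connection, first passing from $I\star\alpha\subseteq J$ to $\beta\star(\rbox J)\subseteq\rbox I$, then from this to $(\lbox{(\rbox I)})\star\alpha\subseteq\lbox{(\rbox J)}$. The only difference is presentational: the paper writes the proof as a terse chain of class-level implications, while you unfold the same steps elementwise.
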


\begin{proof}
\begin{align*}
I\star\alpha \subseteq J
& \follows I\star\alpha \subseteq  \lbox{(\rbox{J})} \\
& \follows \rbox{I} \supseteq \beta\star(\rbox{J}) \\
& \follows \rbox{(\lbox{(\rbox{I})})}
    \supseteq \beta\star(\rbox{J}) \\
& \follows (\lbox{(\rbox{I})})\star\alpha
\subseteq \lbox{(\rbox{J})}\qedhere
\end{align*}
\end{proof}

\begin{remark}
Corollary \ref{cor:starstable} applies to any natural map between
left adjoints (assuming that the necessary pushouts and pullbacks exist)
because any such map determines a conjugate map between the respective
right adjoints.
\end{remark}

\begin{definition}
\label{def:lambda}
Let $(\cll,\crr)$ be a cofibrantly generated weak factorization system
in a locally presentable category $\ckk$. For a functorial cylinder
$(\cyl,\gamma,\sigma)$, a generating set $I$ and a subset 
$S\subseteq \lbox{(\rbox I)}$ define $\Lambda(\cyl,S,I)$
via the following construction:
\begin{align}
&\Lambda^0(\cyl,S,I) := S \union (I\star\gamma^0) \union (I\star\gamma^1) \\
&\Lambda^{n+1}(\cyl,S,I) := \Lambda^n(\cyl,S,I)\star\gamma \\
&\Lambda(\cyl,S,I) := \Union_{n\geq0}\Lambda^n(\cyl,S,I)
\end{align}
\end{definition}

\begin{lemma}
\label{lem:anodext}
Suppose a cylinder functor $\cyl$ for $(\cll,\crr)$ is a left adjoint.
Then for any two generating subsets $I,J \subseteq\cll$ one has
$$\lbox{(\rbox{\Lambda(\cyl,S,I)})} = \lbox{(\rbox{\Lambda(\cyl,S,J)})}$$
\end{lemma}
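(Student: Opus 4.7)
The plan is to exploit symmetry and reduce to a one-sided inclusion, then use Corollary~\ref{cor:starstable} to push maps through the $\star$-construction. Write $L_I=\lbox{(\rbox{\Lambda(\cyl,S,I)})}$ and $L_J=\lbox{(\rbox{\Lambda(\cyl,S,J)})}$. Since $L_J$ is already of the form $\lbox{(\rbox -)}$, one has $L_J=\lbox{(\rbox{L_J})}$, so it suffices to show $\Lambda(\cyl,S,I)\subseteq L_J$ (and then invoke symmetry). I would prove this by induction on the level $n$ in the construction of $\Lambda^n(\cyl,S,I)$.

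The preparatory observation is that because $\cyl$ is a left adjoint, each of $\gamma^0,\gamma^1\colon\mathrm{id}\natarrow\cyl$ and $\gamma\colon(-)+(-)\natarrow\cyl$ is a natural map between left adjoints (the coproduct functor $X\mapsto X+X$ is a left adjoint to the diagonal composed with the product), so the remark following Corollary~\ref{cor:starstable} lets us apply that corollary with any of these as $\alpha$. Using this, I would first establish a key closure property: $L_J$ is stable under the operation $(-)\star\gamma$. Indeed, by the very definition of $\Lambda$, $\Lambda(\cyl,S,J)\star\gamma=\bigcup_n\Lambda^{n+1}(\cyl,S,J)\subseteq\Lambda(\cyl,S,J)\subseteq L_J$, and Corollary~\ref{cor:starstable} applied to $\Lambda(\cyl,S,J)$ upgrades this to $L_J\star\gamma=(\lbox{(\rbox{\Lambda(\cyl,S,J)})})\star\gamma\subseteq\lbox{(\rbox{L_J})}=L_J$.

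For the induction, the base case $n=0$ splits into three parts. The inclusion $S\subseteq L_J$ is immediate since $S\subseteq\Lambda^0(\cyl,S,J)$. For $I\star\gamma^0\subseteq L_J$: the set $J\star\gamma^0\subseteq\Lambda^0(\cyl,S,J)\subseteq L_J$, so by Corollary~\ref{cor:starstable} we get $(\lbox{(\rbox J)})\star\gamma^0\subseteq L_J$; since $J$ is a generating set for $(\cll,\crr)$, $I\subseteq\cll=\lbox{(\rbox J)}$, hence $I\star\gamma^0\subseteq L_J$. The argument for $I\star\gamma^1$ is identical. The inductive step is then exactly the closure of $L_J$ under $(-)\star\gamma$ established above: if $\Lambda^n(\cyl,S,I)\subseteq L_J$, then $\Lambda^{n+1}(\cyl,S,I)=\Lambda^n(\cyl,S,I)\star\gamma\subseteq L_J\star\gamma\subseteq L_J$.

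Taking the union over $n$ gives $\Lambda(\cyl,S,I)\subseteq L_J$, and applying $\lbox{(\rbox -)}$ to both sides yields $L_I\subseteq L_J$; the reverse inclusion is obtained by swapping the roles of $I$ and $J$. The main technical hurdle is really just the verification that Corollary~\ref{cor:starstable} is applicable to $\gamma$ itself (rather than only to $\gamma^0,\gamma^1$, which are manifestly between left adjoints via the identity functor), and this is handled by observing that $(-)+(-)$ is a left adjoint.
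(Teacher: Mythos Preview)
Your proof is correct and follows essentially the same approach as the paper: both argue by induction that $\Lambda^n(\cyl,S,I)\subseteq \lbox{(\rbox{\Lambda(\cyl,S,J)})}$, using Corollary~\ref{cor:starstable} for the base case (via $J\star\gamma^k\subseteq\Lambda(\cyl,S,J)$, hence $\cll\star\gamma^k\subseteq L_J$) and for the inductive step (via $\Lambda(\cyl,S,J)\star\gamma\subseteq\Lambda(\cyl,S,J)$, hence $L_J\star\gamma\subseteq L_J$). You are simply more explicit than the paper in isolating the closure property $L_J\star\gamma\subseteq L_J$ as a separate step and in spelling out why Corollary~\ref{cor:starstable} applies to $\gamma$ (namely that $X\mapsto X+X$ is a left adjoint), which the paper leaves to the remark following that corollary.
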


\begin{proof}
We will drop $\cyl$ and $S$ from the notation for $\Lambda$ and show
\newline
$\Lambda^n(I) \subseteq \lbox{(\rbox{\Lambda(J)})}$
for all $n\geq0$.

\begin{numlist}
\item
We have $J\star\gamma^k\subseteq\Lambda(J)$ (for $k=0,1$).
Corollary~\ref{cor:starstable} then gives
$\cll\star\gamma^k\subseteq \lbox{(\rbox{\Lambda(J)})}$.
So in particular $\Lambda^0(I)\subseteq \lbox{(\rbox{\Lambda(J)})}$.

\item
Assume $\Lambda^n(I) \subseteq \lbox{(\rbox{\Lambda(J)})}$.
\newline
Corollary~\ref{cor:starstable} then gives
$
\Lambda^{n+1}(I)
=\Lambda^n(I)\star\gamma \subseteq \lbox{(\rbox{\Lambda(J)})}
$\qedhere
\end{numlist}
\end{proof}

\begin{remark}
In general one cannot expect $\Lambda(\cyl,S,I)\subseteq\cll$ without any
further assumptions. However, if $\cyl$ is a left adjoint,
Lemma \ref{lem:anodext} shows, that this property does not depend on
the choice of the generating subset.
This motivates the following definition.
\end{remark}

\begin{definition}
\label{def:cart}
Let $(\cll,\crr)$ be weak factorization system in a category $\ckk$.
A functorial cylinder $(\cyl,\gamma,\sigma)$ is \df{cartesian} if 
\begin{alphlist}
\item
The cylinder functor $\map{\cyl}{\ckk}{\ckk}$ is a left adjoint
\item $\cll\star\gamma\subseteq\cll$ and
$\cll\star\gamma^k\subseteq\cll$ \ ($k=0,1$)
\end{alphlist}
\end{definition}

\begin{remark}
Condition (a) allows using Lemma~\ref{lem:conjugate}
and Corollary~\ref{cor:starstable}. In particular, if $(\cll,\crr)$
is cofibrantly generated by some subset $I\subseteq\cll$, Condition (b)
already holds whenever $I\star\gamma^0$, $I\star\gamma^1$ and $I\star\gamma$
lie in $\cll$. Also for any $f\in\cll$ we have $\cyl f=f'(f\star\gamma^0)$
where $f'$ is a pushout of $f$, so that $\cyl f$ is again in $\cll$.
\end{remark}

\noindent
We now insert a comparison of Definition~\ref{def:cart}
with \cite{CISI02}*{D\'efinition~2.3}.
Let $\cee$ be a Grothendieck topos. We recall the following properties:
\begin{numlist}

\item
Colimits in $\cee$ are universal: given a colimit cocone
$\map{x_i}{X_i}X$ and a map $\map fYX$, the induced maps
$\map{f^*(x_i)}{f^*(X_i)}Y$ obtained from pulling back the $x_i$
along $f$ again form a colimit cocone.
This is \cite{JOHN77}*{Lemma~1.51}.

\item
$\cee$ is locally presentable. This follows from \cite{ADRO94}*{Theorem~1.46}
together with the fact that the sheaves with respect to a site form a
small orthogonality class (in the sense of \cite{ADRO94}*{Definition~1.35})
inside the respective presheaf topos.

\item
Whenever one has a diagram
\begin{equation}
\label{dia:eff}
\xymatrix{
P \ar[d]_a \ar[r]^b & B \ar[d] \ar@/^/[rdd]^y & \\
A \ar[r] \ar@/_/[rrd]_x & Q \ar[rd]|{x\vee y} & \\
&& X
}
\end{equation}
where $x$ and $y$ are monomorphisms, $P$ is the pullback of $x$ and $y$,
and $Q$ is the pushout of $a$ and $b$, then the induced map
$\map {x\vee y}QX$ is also a monomorphism.
This follows from \cite{JOHN77}*{Proposition~1.55}.

\item
Monomorphisms are closed under transfinite composition.
This follows from repeated application of \cite{ADRO94}*{Corollary~1.60}.
\end{numlist}

\noindent
From the last three items above, it follows by
\cite{BEKE00}*{Proposition~1.12} that $(\Mono,\rbox\Mono)$ is a
cofibrantly generated weak factorization system.
Now suppose $(\cyl,\gamma,\sigma)$ is a cylinder for $(\Mono,\rbox\Mono)$
and consider the following conditions:
\begin{description}

\item[DH1]
The functor $\cyl$ preserves monomorphisms and all colimits.

\item[DH2]
If $\map fXY$ is a monomorphism then
\begin{equation}
\label{dia:dh2}
\xymatrix{
X \ar[r]^{\gamma^k_X} \ar[d]_f & {\cyl X} \ar[d]^{\cyl f} \\
Y \ar[r]_{\gamma^k_Y} & {\cyl Y}
}
\end{equation}
are pullback squares ($k=0,1$).

\item[DH3]
If $\map fXY$ is a monomorphism then
\begin{equation}
\label{dia:dh3}
\xymatrix{
X+X \ar[r]^-{\gamma_X} \ar[d]_{f+f} & {\cyl X} \ar[d]^{\cyl f} \\
Y+Y \ar[r]_-{\gamma_Y} & {\cyl Y}
}
\end{equation}
is a pullback square.
\end{description}

\noindent
Conditions DH1 and DH2 were introduced by
Cisinski~ \cite{CISI02}*{D\'efinition~2.3}.
We first observe, that it is enough to restrict attention to DH1:

\begin{lemma}
Given a cylinder $(\cyl,\gamma,\sigma)$ for $(\Mono,\rbox\Mono)$,
one has the implications
$\text{DH1} \follows \text{DH2} \follows \text{DH3}$.
\end{lemma}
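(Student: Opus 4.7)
The plan is to verify the two implications separately. For DH1 $\Rightarrow$ DH2 I will exploit the topos-theoretic fact that a pushout of a monomorphism along itself is also a pullback (with both coprojections again monic), together with the colimit- and mono-preservation of $\cyl$ supplied by DH1. For DH2 $\Rightarrow$ DH3 I will decompose along the coproduct $Y+Y$ using the universality and disjointness of coproducts in the topos $\cee$, reducing to two applications of DH2.

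For DH1 $\Rightarrow$ DH2, fix a monomorphism $\map fXY$ and form the pushout $Z := Y \cup_X Y$ with coprojections $\map{j_0,j_1}YZ$. In $\cee$ this square is simultaneously a pushout and a pullback, with $j_0$ and $j_1$ again monic. Applying $\cyl$ preserves the pushout (colimit preservation) and keeps the coprojections monic (mono preservation), so the resulting square of $\cyl f$ along itself is likewise a pushout, and by the same topos fact a pullback. To test the DH2 condition, I take $\map aTY$ and $\map bT{\cyl X}$ with $\gamma^k_Y a = \cyl f b$. Naturality of $\gamma^k$ and the equation $j_0 f = j_1 f$ give
$\gamma^k_Z j_0 a = \cyl j_0\,\gamma^k_Y a = \cyl j_0\,\cyl f\, b = \cyl j_1\,\cyl f\, b = \gamma^k_Z j_1 a$,
and since $\gamma^k_Z$ is split monic (retraction $\sigma_Z$), $j_0 a = j_1 a$. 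The pullback property of the original square then produces a unique $\map tTX$ with $f t = a$, while $\cyl f\,\gamma^k_X t = \gamma^k_Y f t = \gamma^k_Y a = \cyl f\, b$ together with $\cyl f$ being monic forces $\gamma^k_X t = b$; uniqueness of $t$ is automatic from $f$ being monic.

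For DH2 $\Rightarrow$ DH3, fix a monomorphism $f$ and a test cone $\map aT{Y+Y}$, $\map bT{\cyl X}$ with $\gamma_Y a = \cyl f b$. Universality and disjointness of coproducts supply a decomposition $T \iso T_0 + T_1$ in which $T_k$ is the pullback of $a$ along $\iota^k_Y$ and $a|_{T_k} = \iota^k_Y a_k$ for some $\map{a_k}{T_k}Y$. Restricting $b$ gives $\cyl f(b|_{T_k}) = \gamma^k_Y a_k$, so DH2 produces unique $\map{c_k}{T_k}X$ with $f c_k = a_k$ and $\gamma^k_X c_k = b|_{T_k}$. These assemble into a map $\map c T{X+X}$ satisfying $(f+f) c = a$ and $\gamma_X c = b$. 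For uniqueness I will pull any competitor $c'$ back along the pullback square
$$
\xymatrix{
X \ar[r]^{\iota^k_X} \ar[d]_f & X+X \ar[d]^{f+f} \\
Y \ar[r]_{\iota^k_Y} & Y+Y
}
$$
(a pullback by universal and disjoint coproducts), so that $c'|_{T_k}$ factors as $\iota^k_X c'_k$ with $f c'_k = a_k$, and then invoke the uniqueness clause of DH2 componentwise to identify $c'_k$ with $c_k$.

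The substantive obstacle is DH1 $\Rightarrow$ DH2: moving from the preservation properties of $\cyl$ to a pullback assertion about $\gamma^k$ requires introducing the auxiliary pushout $Y \cup_X Y$ and leveraging the topos's van Kampen-type behaviour for pushouts of monomorphisms; the key trick is to detect the pullback indirectly via the split-monic $\gamma^k_Z$. By contrast, DH2 $\Rightarrow$ DH3 is essentially bookkeeping once the coproduct decomposition of $T$ is in place.
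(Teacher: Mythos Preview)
Your argument is correct, but for DH1 $\Rightarrow$ DH2 you take a noticeably heavier route than the paper. The paper simply pastes the naturality square of $\gamma^k$ with that of $\sigma$: since $\gamma^k_X\sigma_X=X$ and $\gamma^k_Y\sigma_Y=Y$, the outer rectangle is trivially a pullback, and because $\cyl f$ is a monomorphism (this is the only consequence of DH1 that is used) the left square is forced to be a pullback by the elementary lemma that in a commutative rectangle with outer pullback and monic middle vertical, the left square is a pullback. No topos-specific exactness is invoked. Your proof instead introduces the cokernel pair $Y\cup_X Y$ and relies on the topos fact that every monomorphism is the equalizer of its cokernel pair; this is valid, but the detour through $Z$, the preservation of that pushout by $\cyl$, and the observation that the $\cyl$-image square is again a pullback are all unnecessary (indeed you never actually use the last of these). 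What your approach buys is a certain conceptual symmetry with the van Kampen viewpoint; what the paper's approach buys is a two-line proof that works in any category, using only the retraction $\sigma$.

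For DH2 $\Rightarrow$ DH3 the two arguments are essentially the same: both reduce to universality of coproducts. The paper forms the pullback $P$ of $\gamma_Y$ and $\cyl f$ once and for all and identifies it with $X+X$ via pullback pasting against the DH2 squares; you instead verify the pullback universal property elementwise by decomposing each test object $T$ along the coproduct $Y+Y$. Your bookkeeping with the auxiliary pullback square for $\iota^k_X$ and $f+f$ (an extensivity fact) handles uniqueness correctly.
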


\begin{proof}
Assume that the cylinder satisfies DH1. For every $\map fXY$, the outer
rectangle in the diagram
$$
\xymatrix{
X \ar[r]^{\gamma^k_X} \ar[d]_f
 & {\cyl X} \ar[r]^{\sigma_X} \ar[d]^{\cyl f} & X \ar[d]^f \\
Y \ar[r]_{\gamma^k_Y} & {\cyl Y} \ar[r]_{\sigma_Y} & Y
}
$$
is always a pullback. If $f$ is a monomorphism then so is $\cyl f$
and hence the left square is also a pullback.
So the cylinder satisfies DH2.

Assume that the cylinder satisfies DH2. Given a monomorphism $\map fXY$,
consider for $k=0,1$ the diagrams
$$
\xymatrix{
X \ar[d]_f \ar[r]^{p^k} & P \ar[d]^h \ar[r]^g & {\cyl X} \ar[d]^{\cyl f} \\
Y \ar[r]_{\iota^k_Y} & Y+Y \ar[r]_{\gamma_Y} & {\cyl Y}
}
$$
where the right square is a pullback and $p^k$ is induced by the maps
$f\iota^k_Y$ and $\gamma^k_X$. By DH2 the outer rectangle is also
pullback and hence the left square is a pullback too.
Because coproducts are universal, the maps $p^0$ and $p^1$ make $P$
into a coproduct of $X$ and $X$.
The canonical isomorphism $\map u{X+X}P$ with $\iota^k_X u=p^k$ then
satisfies $uh=f+f$ and $ug=\gamma_X$. So the cylinder satisfies DH3.
\end{proof}

\begin{corollary}
\label{cor:adm_equal_dh1}
In a Grothendieck topos a cylinder for $(\Mono,\rbox\Mono)$
is cartesian iff it satisfies DH1 (and hence DH2 and DH3) above.
\end{corollary}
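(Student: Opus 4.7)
The plan is to prove each direction of the biconditional separately, leveraging the groundwork already laid (the implications DH1 $\Rightarrow$ DH2 $\Rightarrow$ DH3 have been established, and the four listed properties of Grothendieck toposes are available).

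\textbf{DH1 $\Rightarrow$ cartesian.} Assume the cylinder satisfies DH1. Then $\cyl$ is cocontinuous between locally presentable categories (property~(2) of $\cee$), so by the adjoint functor theorem for locally presentable categories (\cite{ADRO94}*{Theorem~1.66}) it is a left adjoint, giving condition~(a) of Definition~\ref{def:cart}. For condition~(b), fix a monomorphism $\map fXY$. Since $(\Mono,\rbox\Mono)$ is a weak factorization system, every object is cofibrant, so by Observation~\ref{obs:cylinders}(a) each $\gamma^k_Z$ and $\gamma_Z$ is a monomorphism; by DH1 the map $\cyl f$ is also mono. Applying DH2 and property~(3) of $\cee$ (stability of monomorphisms under the pushout--pullback interchange) to the square with monos $\gamma^k_Y$ and $\cyl f$ identifies $f\star\gamma^k$ with the induced map from the pushout and shows it is mono; the same argument using DH3 in place of DH2 shows $f\star\gamma\in\Mono$.

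\textbf{Cartesian $\Rightarrow$ DH1.} Assume $(\cyl,\gamma,\sigma)$ is cartesian. Condition~(a) makes $\cyl$ a left adjoint, so it preserves all colimits. To see that $\cyl$ preserves monomorphisms, I will use the factorization
$$\cyl f \;=\; f' \cdot (f\star\gamma^0)$$
noted in the Remark after Definition~\ref{def:cart}, where $f'$ is a pushout of $f$. Since $\Mono = \lbox{(\rbox\Mono)}$ is closed under pushouts, $f'$ is a monomorphism, and by cartesianness $f\star\gamma^0$ is a monomorphism; hence their composite $\cyl f$ is a monomorphism.

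The only place where care is needed is invoking property~(3) correctly in the first direction: one must check that the pushout--pullback interchange diagram obtained from DH2 (respectively DH3) matches the shape of \eqref{dia:eff}, with the two specified monomorphisms being $\gamma^k_Y$ and $\cyl f$ (respectively $\gamma_Y$ and $\cyl f$), so that the induced map from the pushout is exactly $f\star\gamma^k$ (respectively $f\star\gamma$). Everything else is bookkeeping, and no step appears to present a genuine obstacle.
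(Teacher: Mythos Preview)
Your proof is correct and follows essentially the same approach as the paper's. The only cosmetic difference is that for the adjoint functor step you cite \cite{ADRO94}*{Theorem~1.66} directly, whereas the paper explicitly verifies the hypotheses of the dual Special Adjoint Functor Theorem (cocompleteness, co-wellpoweredness, existence of a generator); both routes yield that the colimit-preserving $\cyl$ is a left adjoint, and the treatment of condition~(b) via matching DH2/DH3 with diagram~\eqref{dia:eff} is identical to the paper's.
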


\begin{proof}
Let $(\cyl,\gamma,\sigma)$ be a cylinder.

Suppose it is cartesian. Then the left adjoint $\cyl$ preserves all
colimits and we already remarked before that $f\in\Mono$ implies
$\cyl f\in\Mono$. Therefore condition DH1 is satisfied, as well as
conditions DH2 and DH3.

Conversely, suppose that condition DH1 is satisfied.
Now, any locally presentable category is cocomplete (by definition),
co-wellpowered (by \cite{ADRO94}*{Theorem~1.58}) and has a (small)
generator (by \cite{ADRO94}*{Theorem~1.20}).
Therefore it satisfies the dual form of the conditions in Freyd's
Special Adjoint Functor Theorem, and the colimit preserving functor
$\cyl$ is indeed a left adjoint.

To check that $\Mono$ is stable under the $(-)\star\gamma^k$ and
$(-)\star\gamma$, match diagram~\eqref{dia:eff} above with
the diagrams~\eqref{dia:dh2} and \eqref{dia:dh3}.
More precisely, for a monomorphism $\map fXY$ let
$a=f$, $b=\gamma^k_X$, $x=\gamma^k_Y$, $y=\cyl f$ in diagram~\eqref{dia:eff}.
Then $f\star\gamma^k$ coincides (up to isomorphism) with $x\vee y$
and because condition DH2 is satisfied, $x\vee y$ is a monomorphism.
Similarly, conditions DH3 gives that $f\star\gamma$ is a monomorphism.
\end{proof}

We now resume the description of the construction.

\begin{definition}
\label{def:weq}
Let $(\cll,\crr)$ be a weak factorization system, cofibrantly generated
by a subset $I\subseteq\cll$. Let $(\cyl,\gamma,\sigma)$ be a functorial
cylinder and $S\subseteq\cll$ be any subset. Define $\cww(\cyl,S,I)$ as
the class of all those maps $\map fXY$ such that for all objects
$T$ with $(T\to1)\in \rbox{\Lambda(\cyl,S,I)}$ the induced map
$\map{f^\ast}{\ckk(Y,T)/{\htopt}}{\ckk(X,T)/{\htopt}}$
is bijective.
\end{definition}

\begin{remark}
\label{rem:smith1}
Clearly $\cww(\cyl,S,I)$ contains all isomorphisms, has the
2-3 property and is closed under retracts in $\ckk^\two$.
Furthermore, whenever $fg$ and $gf$ lie in $\cww(\cyl,S,I)$, then so do
$f$ and $g$. All these properties follow from the corresponding
properties of bijections. Also note, that for $f\htop g$, one has
$f\in \cww(\cyl,S,I) \iff g\in \cww(\cyl,S,I)$ because the induced maps
$\map{f^\ast,g^\ast}{\ckk(Y,T)/{\htopt}}{\ckk(X,T)/{\htopt}}$ coincide.
\end{remark}

Besides being cofibrantly generated, the weak factorization system
$(\Mono,\rbox\Mono)$ in a Grothendieck topos has the property that
each object is cofibrant, i.e. that each map $(0\to X)$ is in $\cll$.
For convenience, we combine these two properties into one definition:

\begin{definition}
A model structure (weak factorization system) is \df{cofibrant} if it is
cofibrantly generated and every object is cofibrant.
\end{definition}

\begin{lemma}
\label{lem:gamma_is_anext}
Let $(\cll,\crr)$ be a cofibrant weak factorization system,
let $(\cyl,\gamma,\sigma)$ be a cartesian  cylinder and
let $\Lambda:=\Lambda(\cyl,S,I)$ as in Definition~\ref{def:lambda}.
Then the natural maps $\gamma^0$ and $\gamma^1$ have their
components in $\lbox{(\rbox\Lambda)}$.
\end{lemma}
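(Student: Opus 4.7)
The plan is to reduce $\gamma^k_X$ to a map of the form $f\star\gamma^k$ for a cofibration $f$, and then exploit Corollary~\ref{cor:starstable} to place it in $\lbox{(\rbox\Lambda)}$.

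First I would observe that because the cylinder is cartesian, $\cyl$ is a left adjoint and therefore preserves the initial object, so $\cyl\,0 = 0$. Applying the $(-)\star\gamma^k$ construction to the unique map $f=(0\to X)$, the pushout defining the source of $f\star\gamma^k$ is $X +_0 \cyl\,0 = X + 0 \cong X$, and the resulting map $X\to \cyl X$ is (by inspection against the defining universal property) exactly $\gamma^k_X$. So $(0\to X)\star\gamma^k = \gamma^k_X$ up to canonical isomorphism; this reduction is the crux and also the only spot where the ``every object is cofibrant'' hypothesis actively contributes (through the next step).

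Next, by the cofibrancy assumption, the map $(0\to X)$ lies in $\cll = \lbox{(\rbox I)}$. By construction of $\Lambda^0$ we have the inclusion $I\star\gamma^k \subseteq \Lambda^0 \subseteq \Lambda$, so Corollary~\ref{cor:starstable} (applied with $J := \Lambda$ and $\alpha := \gamma^k$, using that $\cyl$ is a left adjoint so that the required conjugate right adjoint exists) yields
\[
\cll \star \gamma^k \;=\; \lbox{(\rbox I)} \star \gamma^k \;\subseteq\; \lbox{(\rbox \Lambda)}.
\]
Combining this with the identification of the previous paragraph gives $\gamma^k_X = (0\to X)\star\gamma^k \in \lbox{(\rbox\Lambda)}$, as required, for $k=0,1$.

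The main technical point to be careful about is the naturality/initial-object identification: I would verify in one short sentence that the pushout $X +_0 \cyl\,0 \to \cyl X$ really agrees with $\gamma^k_X$ by matching universal properties (the two legs being $\gamma^k_X$ itself on $X$ and $\cyl(0\to X)$ on $\cyl\,0 = 0$, which collapses trivially). Everything else in the argument is a direct application of results established earlier in the excerpt (Corollary~\ref{cor:starstable}, the characterization of cartesian cylinders, and the cofibrancy of $X$), so no new machinery is required.
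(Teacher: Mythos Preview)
Your proposal is correct and follows essentially the same approach as the paper's own proof: apply Corollary~\ref{cor:starstable} to the inclusion $I\star\gamma^k\subseteq\Lambda$ to obtain $\cll\star\gamma^k\subseteq\lbox{(\rbox\Lambda)}$, then identify $\gamma^k_X$ (up to isomorphism) with $(0\to X)\star\gamma^k$ using that the left adjoint $\cyl$ preserves the initial object, and conclude via cofibrancy of $X$. The paper is slightly more cautious in phrasing the identification as ``differs only by composition with some isomorphism'' rather than equality, but this is the same argument.
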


\begin{proof}
Application of Corollary~\ref{cor:starstable} to
$I\star\gamma^k\subseteq\Lambda$ gives 
$\cll\star\gamma^k\subseteq\lbox{(\rbox\Lambda)}$.
Because the left adjoint $\cyl$ must preserve the initial
object, $\gamma^k_X$ differs from $(0\to X)\star\gamma^k$
only by composition with some isomorphism (due to the choice involved
in Definition~\ref{def:star}).
Hence $\gamma^k_X\in\lbox{(\rbox\Lambda)}$.
\end{proof}

We are now ready to state the main result of the section.

\begin{theorem}
\label{thm:3}
Let $\ckk$ be a locally presentable category and $(\cll,\crr)$
a cofibrant weak factorization system generated by a set $I\subseteq\cll$.
Let $(\cyl,\gamma,\sigma)$ be a cartesian cylinder and $S\subseteq\cll$
an arbitrary subset. Then, setting
\begin{align}
\ccc:=\cll &\quad& \cww:=\cww(\cyl,S,I) &\quad&
\cff:=\rbox{(\ccc\intersection\cww)}
\end{align}
gives a cofibrant model structure $(\ccc,\cww,\cff)$ on $\ckk$.
Moreover, $(\cyl,\gamma,\sigma)$ is also a cylinder for this model structure.
\end{theorem}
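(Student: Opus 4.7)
The plan is to apply Smith's Theorem~\ref{thm:smith} to the set $I$ and class $\cww:=\cww(\cyl,S,I)$. Hypothesis~(1) of that theorem is already recorded in Remark~\ref{rem:smith1}. Setting $\Lambda:=\Lambda(\cyl,S,I)$, iterated use of Definition~\ref{def:cart}(b) makes $\Lambda$ a set of cofibrations, so Theorem~\ref{thm:lp_is_wfs} yields a functorial weak factorization system $(\lbox{(\rbox\Lambda)},\rbox\Lambda)$ whose right-hand side defines the ``fibrant'' objects $T$ appearing in Definition~\ref{def:weq}; the accompanying fibrant replacement functor $R$ is accessible by Theorem~\ref{thm:funcfact_is_accessible}(b).

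Hypothesis~(4) I would establish by expressing $\cww$ as an accessible preimage: check that $f\in\cww$ exactly when $Rf$ is a homotopy equivalence between fibrants, with respect to the (final) cylinder induced by $\cyl$ after passing to the $\Lambda$-fibrant subcategory. Then Theorem~\ref{thm:heq_is_reductive}(c) together with Lemma~\ref{lem:preimage_of_replete_reductive}(b) shows $\cww$ is the full image of an accessible functor, and Lemma~\ref{lem:reductive_implies_solution-set} supplies the solution set condition at~$I$.

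Hypothesis~(2) is verified directly. Given $f\colon X\to Y$ in $\rbox I=\crr$, cofibrancy of $Y$ yields a section $s\colon Y\to X$ with $sf=Y$ by lifting $\mathrm{id}_Y$ against $f$. The square with top $(X|fs)\colon X+X\to X$, left $\gamma_X\in\cll$, bottom $\sigma_X f\colon\cyl X\to Y$ and right $f\in\crr$ commutes (the common composite is the codiagonal $(f|f)$), and its diagonal gives $fs\htop X$. Hence $f$ is a homotopy equivalence, so $f^\ast$ is bijective on $\htopt$-classes into every target and $f\in\cww$.

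The main obstacle is hypothesis~(3), and my strategy is to prove the identification $\cll\cap\cww=\lbox{(\rbox\Lambda)}$; any class of this form is automatically closed under pushouts and transfinite composition. For ``$\supseteq$'' I first show $\lbox{(\rbox\Lambda)}\subseteq\cww$: given $\ell\colon A\to B$ in $\lbox{(\rbox\Lambda)}$ and a fibrant $T$, surjectivity of $\ell^\ast$ on hom-sets comes from $\ell\boxrel(T\to 1)$; for injectivity up to $\htopt$, a homotopy $H\colon\cyl A\to T$ between $\ell\alpha$ and $\ell\beta$ glues with $(\alpha|\beta)\colon B+B\to T$ along the pushout defining $\ell\star\gamma$, and since the inclusion $\Lambda\star\gamma\subseteq\Lambda$ combined with Corollary~\ref{cor:starstable} gives $\ell\star\gamma\in\lbox{(\rbox\Lambda)}$, lifting against $(T\to 1)$ produces the sought-after homotopy $\cyl B\to T$ showing $\alpha\htop\beta$. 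For ``$\subseteq$'', factor $f\in\cll\cap\cww$ as $f=\ell r$ with $\ell\in\lbox{(\rbox\Lambda)}$ and $r\in\rbox\Lambda\subseteq\crr$; the 2-3 property forces $r\in\cww$, and since $f\in\cll$ yields $f\boxrel r$, the retract observation in the remark following Definition~\ref{def:wfs} realises $f$ as a retract of $\ell$, so $f\in\lbox{(\rbox\Lambda)}$. Finally, $(\cyl,\gamma,\sigma)$ remains a cylinder for the new model structure because $\ccc=\cll$ is unchanged and $\sigma_X\in\crr\subseteq\cww$. The crucial point is the injectivity step, which is exactly where the cartesian condition $\Lambda\star\gamma\subseteq\Lambda$ enters.
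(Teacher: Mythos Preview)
Your argument for hypothesis~(3) contains a genuine error. You write ``$r\in\rbox\Lambda\subseteq\crr$'', but this inclusion points the wrong way: since $\Lambda\subseteq\cll$, one has $\crr=\rbox\cll\subseteq\rbox\Lambda$, not the reverse. Consequently, from $f\in\cll$ and $r\in\rbox\Lambda$ you cannot conclude $f\boxrel r$, and the retract argument collapses. In fact the paper never claims the identity $\cll\cap\cww=\lbox{(\rbox\Lambda)}$; what it proves (Lemma~\ref{lem:nfib-object_implies_fib}) is the weaker $\ccc\cap\cww=\ccc\cap\lbox\cnn$, where $\cnn\subseteq\rbox\Lambda$ is the subclass of maps with \emph{fibrant codomain}. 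The passage through $\cnn$ is not cosmetic: it is exactly what allows one to show that a map $p\in\rbox\Lambda\cap\cww$ with fibrant codomain lies in $\rbox\ccc$ (Lemma~\ref{lem:nfib+weq_implies_tfib}), after which the retract argument goes through. Reaching that lemma requires the rather delicate detour through dual strong deformation retracts (Lemma~\ref{lem:nfib+dsdr_implies_tfib}) and Corollary~\ref{cor:cof+fibcod+weq_implies_anext}; there is no shortcut here.

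Two smaller points. First, your injectivity argument in the ``$\supseteq$'' direction only treats a single $\htop$-step, whereas $\cww$ is defined via the transitive closure $\htopt$; you need the fact that $\htop$ is already an equivalence relation on maps into fibrant objects (Lemma~\ref{lem:htop_is_trans}) before that argument suffices. Second, the concluding claim ``$\sigma_X\in\crr\subseteq\cww$'' is unjustified: the given cylinder is not assumed final, so $\sigma_X$ need not lie in $\crr$. One instead obtains $\sigma_X\in\cww$ from $\gamma^k_X\in\cww$ and the 2-3 property, as in Corollary~\ref{cor:sigma_is_weq}.
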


\begin{remark}
Theorem~\ref{thm:3} does not remain valid if ''cofibrant'' is
weakened to ''cofibrantly generated'' in its statement.
Let $\cgg$ be a (small) generator in $\ckk$ and
consider the set of codiagonal maps
$I:= \{\map{(G|G)}{G+G}G\mid G\in\cgg\}$.

\begin{numlist}
\item
$\rbox I$ is the class $\Mono$ of monomorphisms and
$\lbox{(\rbox I)}$ is the class $\StrEpi$ of strong epimorphisms.

\item
The $(\StrEpi,\Mono)$-factorization of every codiagonal $(X|X)$ as
$$
\xymatrix{X+X\ar[r]^-{(X|X)}&X\ar[r]^X&X}
$$
gives a cylinder $(\cyl,\gamma,\sigma)$ where $\cyl$ and $\sigma$ are
the identity and $\gamma_X=(X|X)$. In particular, $\cyl$ is a left
adjoint and the homotopy relation is equality.

\item
If $\map fXY$ is a strong epimorphism, then $f\star\gamma^0$,
$f\star\gamma^1$ and $f\star\gamma$ are also strong epimorphisms.
This is clear for $\gamma^0$ and $\gamma^1$ because they are
identity transformations.
In the case of $\gamma$, it is enough to observe that
$f=g(f\star\gamma)$, where $g$ is the pushout of $f+f$ along $\gamma_X$.
(Alternatively one can check that $\gamma^\ast\star(-)$ preserves
monomorphisms and apply Lemma~\ref{lem:conjugate}).
\end{numlist}

\noindent
Altogether, $(\StrEpi,\Mono)$ is cofibrantly generated and
$(\cyl,\gamma,\sigma)$ is cartesian.
Going through the construction of
$\Lambda=\Lambda(\emptyset,I)$ in this case, one obtains that
$\Lambda^0$ consists only of isomorphisms and therefore all
$\Lambda^n$ consist only of isomorphisms.
Consequently, every object $X$ satisfies $(X\to1)\in\rbox\Lambda$
and $\cww(\emptyset,I)$ is the class of isomorphisms.
In particular $\rbox\StrEpi$ is not included
in $\cww(\emptyset,I)$.
\end{remark}

The rest of this section will consist of the proof of Theorem~\ref{thm:3}
via Smith's Theorem~\ref{thm:smith}. It turns out that almost
all steps in the proof of \cite{CISI02}*{Th\'eor\`eme~2.13} can be
reused with only minor modifications to verify conditions (1)--(3)
of Theorem~\ref{thm:smith}. However, in verifying condition~(4) we
will depart from \cite{CISI02} and use Part (c) of \ref{thm:heq_is_reductive}
(i.e.~\cite{ROSI07}*{Proposition~3.8}).
Condition (1) already already holds by Remark~\ref{rem:smith1}.
We now turn to condition~(2)

By Lemma~\ref{lem:anodext}, $\rbox{\Lambda(\cyl,S,I)}$ and hence
$\cww(\cyl,S,I)$ do not depend on $I$.
While they do depend on $\cyl$ and $S$ (it will turn out that $S$ is
contained in $\ccc\intersection\cww$ and the components of $\sigma$
lie in $\cww$), the particular choices of $\cyl$ and $S$ do not play
any role in the proof.
Therefore we will simply write $\Lambda$ for $\Lambda(\cyl,S,I)$
and $\cww$ for $\cww(\cyl.S,I)$.
We call an object $X$ \df{fibrant} if $(X\to1)\in\rbox\Lambda$.
In Lemma~\ref{lem:nfib-object_implies_fib} we will show
that these objects coincide with the fibrant objects of the
resulting model structure, so that the terminology is justified.

\begin{definition}[\cite{CISI02}*{D\'efinition~2.15}]
A map $\map fXY$ is a \df{dual strong deformation retract} if there exist
maps  $\map gYX$ and
\newline
$\map h{\cyl{X}}X$ such that the following diagram commutes
\begin{equation}
\label{dia:dsdr}
\xymatrix{
X+X \ar[rr]^{(X|fg)} \ar[d]_{\gamma_X} && X \ar[d]^f
  & Y \ar[l]_g \ar@{=}[dl]\\
{\cyl{X}} \ar[r]_{\sigma_X} \ar[rru]^h& X \ar[r]_f & Y& 
}
\end{equation}
\end{definition}

\begin{lemma}
\label{lem:tfib_implies_dsdr}
Every element of $\rbox{\ccc}$ is a dual strong deformation retract.
\end{lemma}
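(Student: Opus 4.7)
The plan is to prove the lemma by two applications of the defining lifting property of the weak factorization system $(\cll,\crr)$.

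First I would construct the section $g$. Since the weak factorization system is cofibrant, the object $Y$ is cofibrant, i.e., the map $(0\to Y)$ belongs to $\cll = \ccc$. Given $f \in \rbox{\ccc}$, the square
$$
\xymatrix{
0 \ar[r] \ar[d] & X \ar[d]^f \\
Y \ar@{=}[r] \ar@{.>}[ru]^g & Y
}
$$
admits a diagonal $\map gYX$ with $gf = Y$ (in reading order, so $g$ is a section of $f$).

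Next I would construct the homotopy $h$. Because $\cyl X$ arises from a $(\cll,\crr)$-factorization of the codiagonal, the map $\gamma_X$ lies in $\cll$. I want to lift in the square
$$
\xymatrix{
X+X \ar[r]^{(X|fg)} \ar[d]_{\gamma_X} & X \ar[d]^f \\
{\cyl X} \ar[r]_{\sigma_X f} \ar@{.>}[ru]^h & Y
}
$$
The small verification that this square commutes is the only calculation: on the one hand $\gamma_X \sigma_X = (X|X)$, so composing with $f$ gives the codiagonal $(f|f)\colon X+X\to Y$; on the other hand $(X|fg)f$ has components $(f,\,fgf) = (f,f)$ by the identity $gf = Y$. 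The two agree. Since $\gamma_X \in \cll$ and $f\in\rbox\cll = \crr$, the diagonal $\map h{\cyl X}X$ exists, and by construction it satisfies $\gamma_X h = (X|fg)$ and $hf = \sigma_X f$, exactly the conditions encoded by diagram~\eqref{dia:dsdr}.

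There is no real obstacle here: the whole argument is two uses of $\boxrel$, made possible by the two standing hypotheses (every object cofibrant, and the cofibrancy of $\gamma_X$ built into the cylinder). The only thing one has to be careful about is bookkeeping the reading-order composition convention so that the identity $gf = Y$ is correctly used to reduce $(X|fg)f$ to the codiagonal $(f|f)$.
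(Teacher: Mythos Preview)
Your proof is correct and essentially identical to the paper's: both first obtain the section $g$ from cofibrancy of $Y$ and then the homotopy $h$ from $\gamma_X \boxrel f$, with the same commutativity check $(X|fg)f = (f|f) = \gamma_X\sigma_X f$. One tiny terminological slip: $\gamma_X \in \cll$ holds directly by the definition of a cylinder (a $(\ccc,\cww)$-factorization with $\cww=\ckk$ for the trivial model structure on $(\cll,\crr)$), not because of an $(\cll,\crr)$-factorization, which would additionally require $\sigma_X\in\crr$; this does not affect the argument.
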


\begin{proof}
Let $\map fXY \in \rbox{\ccc}$.
Because every object is cofibrant, $f$ is a retraction,
so there is a $\map gYX$ such that the right triangle in
diagram~\eqref{dia:dsdr} commutes.
Because of $(X|fg)f = (f|f) = (X|X)f = \gamma_X\sigma_X f$
the left square of that diagram also commutes. Now $\gamma_X\boxrel f$
gives the desired diagonal $\map h{\cyl{X}}X$.
\end{proof}

\begin{corollary}
\label{cor:smith2}
$\rbox{\ccc}\subseteq\cww$.
\end{corollary}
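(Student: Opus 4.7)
The plan is to combine the previous Lemma \ref{lem:tfib_implies_dsdr} with Remark \ref{rem:smith1}: since every $f\in\rbox{\ccc}$ is a dual strong deformation retract, we already have the data of a two-sided homotopy inverse, which is exactly what is needed to invert $f^\ast$ on $\ckk(-,T)/{\htopt}$.

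In more detail, I would take $f\colon X\to Y$ in $\rbox{\ccc}$ and invoke Lemma \ref{lem:tfib_implies_dsdr} to obtain $g\colon Y\to X$ and $h\colon\cyl X\to X$ with $gf = Y$ and $\gamma_X h = (X\mid fg)$. The second equation unwraps (via diagram \eqref{dia:cyl}) to $\gamma^0_X h = X$ and $\gamma^1_X h = fg$, which is precisely the statement that $h$ exhibits a homotopy $X \htop fg$.

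Now let $T$ be any fibrant object, i.e.\ $(T\to 1)\in\rbox\Lambda$. Since composition is written in reading order, $g^\ast\colon\ckk(X,T)\to\ckk(Y,T)$ sends $\beta\mapsto g\beta$, and $f^\ast\colon\ckk(Y,T)\to\ckk(X,T)$ sends $\alpha\mapsto f\alpha$. From $gf=Y$ one gets $g^\ast f^\ast = (gf)^\ast = \mathrm{id}$ on $\ckk(Y,T)$, hence on $\ckk(Y,T)/{\htopt}$. On the other side, $f^\ast g^\ast = (fg)^\ast$, and because $fg \htop X$, Remark \ref{rem:smith1} tells us that $(fg)^\ast$ and $X^\ast = \mathrm{id}$ agree on the quotient $\ckk(X,T)/{\htopt}$. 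Thus $f^\ast$ is a bijection on quotients with inverse $g^\ast$, so $f\in\cww$.

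There is no real obstacle here: the work was done in Lemma \ref{lem:tfib_implies_dsdr} (producing the deformation retract) and in Remark \ref{rem:smith1} (seeing that homotopic endomorphisms induce the same map on $\ckk(-,T)/{\htopt}$). The present corollary is just the one-line assembly of these two facts.
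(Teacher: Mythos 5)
Your proof is correct and follows essentially the same route as the paper: both start from the dual strong deformation retract data supplied by Lemma~\ref{lem:tfib_implies_dsdr} and conclude via Remark~\ref{rem:smith1}, the paper by invoking the closure property (if $fg$ and $gf$ lie in $\cww$ then so does $f$), while you simply verify directly that $g^\ast$ inverts $f^\ast$ on $\ckk(-,T)/{\htopt}$. One cosmetic point: since the paper writes composition in reading order, the displayed identities should be $f^\ast g^\ast=(gf)^\ast$ and $g^\ast f^\ast=(fg)^\ast$; this does not affect the argument.
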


\begin{proof}
By the previous Lemma, it is enough to check that every
dual strong deformation retract is in $\cww$. If $f$ and
$g$ are as in Diagram~\eqref{dia:dsdr}, then
$X\htop fg$ and $Y = gf$. Using Remark~\ref{rem:smith1}, one
obtains that $fg$ and $gf$ are in $\cww$ and hence $f\in\cww$.
\end{proof}

\begin{remark}
In fact, one has
$\rbox{\ccc}=\rbox{(\ccc\intersection\cww)}\intersection\cww$.
For the direction not covered by the Corollary, factor a
given $f\in \rbox{(\ccc\intersection\cww)}\intersection\cww$
as $f=\ell r$ with $\ell\in \ccc$ and $r \in \rbox\ccc$.
Then $r\in\cww$ and hence $\ell\in\ccc\intersection\cww$.
Therefore $\ell\boxrel f$ and $f$ is a retract of $r$.
So in the language of model structures, the ''trivial fibrations
are indeed those fibrations that are trivial''.
\end{remark}

Condition~(2) holds by Corollary~\ref{cor:smith2}.
Verifying condition~(3) will occupy us until Corollary~\ref{cor:smith3}.

\begin{lemma}
\label{lem:htop_is_trans}
Let $X$ and $T$ be objects with $T$ fibrant.
Then the homotopy relation $\htop$ is an equivalence relation
on $\ckk(X,T)$.
\end{lemma}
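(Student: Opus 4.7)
The plan is to prove reflexivity immediately, and to handle symmetry and transitivity via a double-cylinder pushout construction, crucially using that $\gamma^0_X$ and $\gamma^1_X$ are anodyne (Lemma~\ref{lem:gamma_is_anext}) together with the fibrancy of $T$.

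Reflexivity is immediate: the constant homotopy $f\sigma_X\colon \cyl X\to T$ satisfies $(f\sigma_X)\gamma_X = f(X|X) = (f|f)$, so $f\htop f$ for every $f\colon X\to T$.

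For symmetry and transitivity, given two homotopies $H_0, H_1\colon \cyl X\to T$ sharing matching endpoint values (for symmetry $H_1$ is the constant $f\sigma_X$; for transitivity $H_1$ is the second given homotopy from $g$ to $h$), I form the pushout
\[
Z := \cyl X\cup_X \cyl X,
\]
identifying matching endpoints of the two copies of $\cyl X$: for symmetry, glue along $\gamma^0=\gamma^0$; for transitivity, glue the $\gamma^1$-end of the first copy to the $\gamma^0$-end of the second. By Lemma~\ref{lem:gamma_is_anext} and the pushout-stability of $\lbox{(\rbox\Lambda)}$, the two canonical inclusions $k^0, k^1\colon \cyl X\to Z$ both lie in $\lbox{(\rbox\Lambda)}$. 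The given homotopies agree at the glue point and hence combine to a single $\Phi\colon Z\to T$ whose restriction along the outer endpoint inclusion $X+X\to Z$ realises the pair of maps we want to connect (namely $(g,f)$ for symmetry, $(f,h)$ for transitivity).

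The main step---and the main obstacle---is to descend from $\Phi\colon Z\to T$ to an actual $L\colon \cyl X\to T$ realising the outer-endpoint pair as $L\gamma_X$. A direct lift of the pair through $\gamma_X$ against $T\to 1\in\rbox\Lambda$ is unavailable because $\gamma_X$ is merely a cofibration and need not itself lie in $\lbox{(\rbox\Lambda)}$. The way around this is to exploit that each component $X\to Z$ of the outer endpoint inclusion is the composite $k^i\circ\gamma^k_X$ of two maps in $\lbox{(\rbox\Lambda)}$, hence itself anodyne; by extending maps into $T$ along these anodyne components using the fibrancy $(T\to 1)\in\rbox\Lambda$ and combining the extensions with the data of $\Phi$, one produces the desired $L\colon \cyl X\to T$ with the prescribed boundary, finishing the proof.
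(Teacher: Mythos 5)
Your reflexivity argument and your diagnosis of the crux are both correct: the obstacle is precisely that $\gamma_X$ need not lie in $\lbox{(\rbox\Lambda)}$, so one cannot simply lift the pair of maps through $\gamma_X$ against $(T\to1)$. But the step you offer to overcome this is not a proof. Knowing that the outer endpoint inclusions $X\to Z$ into your glued object $Z=\cyl X\cup_X\cyl X$ are composites of maps in $\lbox{(\rbox\Lambda)}$ only lets you extend maps $X\to T$ to maps $Z\to T$ --- and you already have such a map, namely $\Phi$. Nothing in ``combining the extensions with the data of $\Phi$'' produces a map $\map L{\cyl X}T$ with $\gamma_X L$ equal to the prescribed pair, because there is no comparison map between $Z$ and $\cyl X$ compatible with the endpoints: a concatenation of two cylinders is not itself a cylinder object for $X$ in this generality, and that is exactly why homotopy along a fixed cylinder fails to be transitive without a further argument.

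The missing idea is the second cylinder direction. The paper's proof takes two homotopies $h,k$ with a common $\gamma^0$-endpoint, forms the pushout $Q$ of $\gamma^0_X+\gamma^0_X$ and $\gamma_X$ (three sides of the square $\cyl\cyl X$, the third side carrying the constant homotopy $\sigma_X v$), and uses that the comparison map $\map{\gamma^0_X\star\gamma}{Q}{\cyl\cyl X}$ lies in $\lbox{(\rbox\Lambda)}$: this follows from $\gamma^0_X\in\lbox{(\rbox\Lambda)}$ (Lemma~\ref{lem:gamma_is_anext}) together with Corollary~\ref{cor:starstable} applied to $\Lambda\star\gamma\subseteq\Lambda$ --- which is exactly where the clause $\Lambda^{n+1}=\Lambda^n\star\gamma$ in the construction of $\Lambda$, i.e.\ the cartesianness of the cylinder, is used. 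Fibrancy of $T$ then fills the square, and restricting the filler along $\cyl(\gamma^1_X)$ yields the homotopy between the two outer endpoints, giving symmetry and transitivity at once. Your proposal never invokes $(-)\star\gamma$ or $\cyl\cyl X$, and without some equivalent square-filling mechanism the descent from $\Phi$ to $L$ that your argument needs simply is not available; so the proof as written has a genuine gap at its main step.
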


\begin{proof}
The relation is clearly reflexive.
For symmetry and transitivity let $u,v,w\in \ckk(X,T)$ and suppose
$v\htop u$ and $v\htop w$ via maps $\map{h,k}{\cyl{X}}X$
with $\gamma_X h=(v|u)$ and $\gamma_X k=(v|w)$.
This gives the solid arrows in the following diagram
$$
\xymatrix{
X+X \ar[r]^{\gamma_X} \ar[d]_{\gamma^0_X+\gamma^0_X}
& {\cyl X} \ar[rr]^{\sigma_X} \ar[d]_p \ar[rd]^{\cyl(\gamma^0_X)} &
& X \ar[dd]^v \\
{\cyl X + \cyl X} \ar[r] \ar@/_/[drrr]_{(h|k)}
& Q \ar[drr]_t \ar[r]^-{\gamma^0_X\star\gamma}
& \relax\cyl\cyl X \ar@{.>}[dr]^d &\\
&&&T
}
$$
where $Q$ is the pushout of $\gamma^0_X+\gamma^0_X$ and $\gamma_X$
and where $t$ is induced by the commuting outer rectangle.
By Lemma~\ref{lem:gamma_is_anext} we have $\gamma^0_X\in\lbox{(\rbox\Lambda)}$.
Applying Corollary~\ref{cor:starstable} to
$\Lambda\star\gamma\subseteq\Lambda$ gives
$\gamma^0_X\star\gamma\in\lbox{(\rbox\Lambda)}$.
Hence $(\gamma^0_X\star\gamma)\boxrel(T\to1)$ and $\map d{\cyl\cyl X}T$ exists.
Therefore the following diagram commutes
$$
\xymatrix{
X+X \ar[d]_{\gamma_X} \ar[r]_{\gamma^1_X+\gamma^1_X}
& {\cyl X + \cyl X} \ar[d]^{\gamma_{\cyl X}} \ar[dr]^{(h|k)} &\\
\relax\cyl X \ar[r]_{\cyl(\gamma^1_X)} & \cyl\cyl K \ar[r]_d & T
}
$$
exhibiting a homotopy from $u$ to $w$.
\end{proof}

\begin{remark}
\label{rem:heq}
With the previous Lemma, the condition for $\map fXY$ to be in $\cww$
can be rephrased in terms of the homotopy relation instead of its
transitive closure: for any given $\map tXT$ with $T$ fibrant there is
a $\map uYT$ with $t\htop fu$ and such a $u$ is determined up to homotopy.
In particular one obtains the following description for maps
between fibrant objects:
\end{remark}

\begin{corollary}
\label{cor:heq}
Suppose $X$ and $Y$ are fibrant. Then  $\map fXY$ is in $\cww$
if and only if there exist a $\map gYX$ with $X\htop fg$ and $Y\htop gf$.
\end{corollary}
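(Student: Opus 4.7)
The plan is to reduce the corollary to a direct computation on hom-sets by combining Remark~\ref{rem:heq} with Lemma~\ref{lem:htop_is_trans}. Since the test object $T$ appearing in the definition of $\cww$ is always fibrant and, by hypothesis, so are $X$ and $Y$, Lemma~\ref{lem:htop_is_trans} guarantees that $\htop$ is already an equivalence relation on each of the hom-sets $\ckk(-,X)$, $\ckk(-,Y)$, and $\ckk(-,T)$ that will be inspected, so $\htop$ coincides with $\htopt$ there and one can work with $\htop$ throughout.

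For the direction $(\Rightarrow)$, assume $f\in\cww$. I would apply the reformulation of Remark~\ref{rem:heq} with $T:=X$ and $t:=X$: its existence clause produces a $g\colon Y\to X$ with $X\htop fg$. Post-composing this homotopy with $f$ and using that $\htop$ is compatible with composition (Definition~\ref{def:htop}(d)) yields $f\htop fgf$. Invoking Remark~\ref{rem:heq} once more, now with $T:=Y$ and $t:=f$, both candidates $u=Y$ and $u=gf$ satisfy $t\htop fu$, so the uniqueness-up-to-homotopy clause of the remark forces $Y\htop gf$.

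For the direction $(\Leftarrow)$, given such a $g$, I would check for an arbitrary fibrant $T$ that pre-composition with $g$ furnishes a two-sided inverse $g^\ast\colon\ckk(X,T)/{\htopt}\to\ckk(Y,T)/{\htopt}$ to $f^\ast$. Well-definedness of $g^\ast$ is immediate from the compatibility of $\htop$ with composition together with the identification $\htop=\htopt$ on the relevant hom-sets. For any $h\colon X\to T$, post-composing the homotopy $X\htop fg$ with $h$ gives $h\htop fgh=f^\ast g^\ast(h)$; symmetrically $Y\htop gf$ yields $k\htop g^\ast f^\ast(k)$ for every $k\colon Y\to T$. Hence $f^\ast$ is a bijection on these quotient hom-sets, so $f\in\cww$.

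The only nontrivial step is invoking the uniqueness part of Remark~\ref{rem:heq} in the forward direction to extract the second homotopy $Y\htop gf$; all the other manipulations are routine bookkeeping using that $\htop$ is a congruence and that $\htop=\htopt$ on hom-sets with fibrant codomain.
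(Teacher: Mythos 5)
Your proposal is correct and follows essentially the same route as the paper: the forward direction is exactly the paper's argument (apply Remark~\ref{rem:heq} with $t=X$ to get $g$ with $X\htop fg$, deduce $f\htop fgf$, then use the uniqueness clause with $t=f$ to get $Y\htop gf$). Your backward direction merely spells out, via the explicit two-sided inverse $g^\ast$ on the quotient hom-sets, the direction the paper dismisses as clear, which is fine.
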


\begin{proof}
One direction is clear. If $\map fXY$ is in $\cww$ then using the remark
with $t=\map XXX$ gives a $\map gYX$ with $X\htop fg$.
Therefore $f\htop fgf$ and using the remark with $t=\map fXY$
yields $gf\htop Y$.
\end{proof}

\begin{lemma}
\label{lem:anext_implies_weq}
$\lbox{(\rbox\Lambda)}\subseteq\cww$
\end{lemma}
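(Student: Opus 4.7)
The plan is to fix $f\colon X\to Y$ in $\lbox{(\rbox\Lambda)}$ and an arbitrary fibrant $T$, and show that the induced map $f^\ast\colon \ckk(Y,T)/{\htopt}\to\ckk(X,T)/{\htopt}$ is a bijection. The key preliminary simplification is that on hom-sets into a fibrant target, Lemma~\ref{lem:htop_is_trans} tells us $\htop$ is already an equivalence relation, so $\htop$ and $\htopt$ coincide on $\ckk(X,T)$ and $\ckk(Y,T)$. This lets us work with honest homotopies rather than their transitive closures.

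For \emph{surjectivity}, given $u\colon X\to T$, the lifting property $f\boxrel (T\to1)$ (which holds because $f\in\lbox{(\rbox\Lambda)}$ and $(T\to1)\in\rbox\Lambda$) produces a diagonal $v\colon Y\to T$ with $vf=u$, so $[v]\mapsto [u]$ on the nose.

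For \emph{injectivity}, suppose $v,v'\colon Y\to T$ satisfy $vf\htop v'f$ via a homotopy $h\colon\cyl X\to T$ with $\gamma_X h = (vf\mid v'f)$. Then $(v\mid v')\colon Y+Y\to T$ and $h\colon\cyl X\to T$ agree on $X+X$, so they factor through the pushout $P=(Y+Y)+_{X+X}\cyl X$ to give a map $\map{w}{P}{T}$. By Definition~\ref{def:star}, the canonical map out of $P$ is precisely $f\star\gamma\colon P\to\cyl Y$. Now $\Lambda\star\gamma\subseteq\Lambda$ by construction, so Corollary~\ref{cor:starstable} (applied with $I=J=\Lambda$ and $\alpha=\gamma$) yields $\lbox{(\rbox\Lambda)}\star\gamma\subseteq\lbox{(\rbox\Lambda)}$, giving $f\star\gamma\in\lbox{(\rbox\Lambda)}$. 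Lifting $w$ against $(T\to1)\in\rbox\Lambda$ produces $k\colon\cyl Y\to T$ with $(f\star\gamma)k=w$. Chasing around the pushout, the coproduct leg of $f\star\gamma$ is $\gamma_Y$, so $\gamma_Y k = (v\mid v')$; this exhibits $v\htop v'$, hence $[v]=[v']$.

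The main obstacle is the injectivity half: recognizing that the homotopy between $vf$ and $v'f$ can be transferred to a homotopy between $v$ and $v'$ by passing through the pushout described by $f\star\gamma$, and then exploiting the closure $\Lambda\star\gamma\subseteq\Lambda$ via Corollary~\ref{cor:starstable} to conclude that $f\star\gamma$ itself lies in $\lbox{(\rbox\Lambda)}$ and so lifts against $(T\to1)$. All other ingredients (surjectivity, the reduction from $\htopt$ to $\htop$) are immediate from the already-established machinery.
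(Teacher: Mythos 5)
Your proposal is correct and follows essentially the same route as the paper: surjectivity via lifting against $(T\to1)\in\rbox\Lambda$, and injectivity by factoring the combined map out of the pushout through $f\star\gamma$, which lies in $\lbox{(\rbox\Lambda)}$ by Corollary~\ref{cor:starstable} applied to $\Lambda\star\gamma\subseteq\Lambda$, with Lemma~\ref{lem:htop_is_trans} handling the passage from $\htopt$ to $\htop$. The only difference is cosmetic (composition-order notation), not mathematical.
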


\begin{proof}
Suppose $\map fXY$ is in $\lbox{(\rbox\Lambda)}$ and 
let $\map tXT$ be a map with $T$ fibrant.
\begin{numlist}
\item
\textsl{Existence:}
Because $f\boxrel (T\to1)$, there exists a $\map uYT$ with $t=fu$, so in
particular $t\htop fu$.

\item
\textsl{Uniqueness:}
Assume $\map{u,v}YT$ with $t\htop fu$ and $t\htop fv$.
By Lemma~\ref{lem:htop_is_trans}, $fu\htop fv$ and there is some
$\map h{\cyl X}X$ with $\gamma_X h=(fu|fv)=(f+f)(u|v)$.
Therefore one has the following diagram
$$
\xymatrix{
X+X \ar[r]^{\gamma_X} \ar[d]_{f+f} & {\cyl X} \ar[d] \ar@/^/[ddr]^h & \\
Y+Y \ar[r] \ar@/_/[drr]_{(u|v)} &
{\pushout{Y+Y}{X+X}{\cyl X}} \ar[dr]|-r & \\
 & & T
}
$$
where $r$ is the induced map from the pushout.
By Corollary~\ref{cor:starstable} $f\star\gamma\in\lbox{(\rbox\Lambda)}$
and hence $(f\star\gamma)\boxrel(T\to1)$, so that $r$ factors through
$f\star\gamma$ via some $\map d{\cyl Y}T$. Therefore
$(u|v) = \gamma_Y d$ and $u\htop v$.\qedhere
\end{numlist}
\end{proof}

\begin{corollary}
\label{cor:sigma_is_weq}
The natural maps $\gamma^0$ and $\gamma^1$ have their components
in $\ccc\intersection\cww$.
The natural map $\sigma$ has its components in $\cww$.
\end{corollary}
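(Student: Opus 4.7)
The plan is to read both halves of the statement directly off the two preceding lemmas, glued together by the elementary closure properties of $\cww$ recorded in Remark~\ref{rem:smith1}. Nothing new needs to be built; the only observation of substance is that the cartesian hypothesis forces $\Lambda\subseteq\cll$, which is what lets me upgrade ''in $\lbox{(\rbox\Lambda)}$'' to ''in $\ccc=\cll$.''

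For the components of $\gamma^0$ and $\gamma^1$: Lemma~\ref{lem:gamma_is_anext} already places them in $\lbox{(\rbox\Lambda)}$, and Lemma~\ref{lem:anext_implies_weq} then hands over the inclusion $\lbox{(\rbox\Lambda)}\subseteq\cww$, so $\gamma^k_X\in\cww$ for $k=0,1$. To get them into $\ccc$ as well, I would verify the inclusion $\Lambda\subseteq\cll$ by induction on the construction of Definition~\ref{def:lambda}. The base case $\Lambda^0=S\cup (I\star\gamma^0)\cup(I\star\gamma^1)$ is contained in $\cll$ because $S\subseteq\cll$ by assumption and $I\star\gamma^k\subseteq\cll\star\gamma^k\subseteq\cll$ by part (b) of Definition~\ref{def:cart}. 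Inductively, if $\Lambda^n\subseteq\cll$, then $\Lambda^{n+1}=\Lambda^n\star\gamma\subseteq\cll\star\gamma\subseteq\cll$, again by cartesianness. Taking unions gives $\Lambda\subseteq\cll$, and the Galois connection yields $\lbox{(\rbox\Lambda)}\subseteq\lbox{(\rbox\cll)}=\cll=\ccc$. Combined with the previous step, this puts each $\gamma^k_X$ into $\ccc\cap\cww$.

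For the components of $\sigma$: diagram~\eqref{dia:cyl} gives $\sigma_X\gamma^k_X=X$ (the identity on $X$). The identity is an isomorphism, hence in $\cww$ by Remark~\ref{rem:smith1}, and we have just shown $\gamma^k_X\in\cww$; the 2-3 property of $\cww$ (also recorded in Remark~\ref{rem:smith1}) then forces $\sigma_X\in\cww$.

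No step in this plan looks like an obstacle. The only point where care is needed is the induction $\Lambda^n\subseteq\cll$, where one must be sure to invoke both clauses of the cartesian definition (the base case uses $\cll\star\gamma^k\subseteq\cll$, the inductive step uses $\cll\star\gamma\subseteq\cll$); everything else is an application of results already in place.
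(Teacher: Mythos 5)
Your proposal is correct and follows essentially the same route as the paper: Lemma~\ref{lem:gamma_is_anext} plus Lemma~\ref{lem:anext_implies_weq} give $\gamma^k_X\in\lbox{(\rbox\Lambda)}\subseteq\ccc\intersection\cww$, and the 2-3 property applied to the factorization of the identity through $\sigma_X$ gives $\sigma_X\in\cww$. Your induction showing $\Lambda\subseteq\cll$ (hence $\lbox{(\rbox\Lambda)}\subseteq\ccc$) is just a spelled-out version of what the paper attributes to condition (b) of Definition~\ref{def:cart}; the only cosmetic point is that in the paper's reading-order convention the identity factorization reads $\gamma^k_X\sigma_X=X$.
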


\begin{proof}
Let $X$ be any object. Lemma~\ref{lem:gamma_is_anext} and
Lemma~\ref{lem:anext_implies_weq} together give
$\gamma^k_X\in \lbox{(\rbox\Lambda)} \subseteq \ccc\intersection\cww$.
The 2-3 property of $\cww$ then implies $\sigma_X\in\cww$.
\end{proof}

The two implications obtained in Lemma~\ref{lem:tfib_implies_dsdr} and in
Corollary~\ref{cor:smith2} can be strengthened to
equivalences under some conditions.

\begin{lemma}
\label{lem:nfib+dsdr_implies_tfib}
Suppose $f\in\rbox\Lambda$. Then
$$f\in\rbox{\ccc} \iff f \text{\ is a dual strong deformation retract}$$
\end{lemma}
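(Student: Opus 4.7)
The direction ``$\Rightarrow$'' is already covered by Lemma~\ref{lem:tfib_implies_dsdr}, which does not require the hypothesis $f\in\rbox\Lambda$ at all; so I concentrate on ``$\Leftarrow$''. Assume $f\colon X\to Y$ lies in $\rbox\Lambda$ and is a dual strong deformation retract witnessed by $g\colon Y\to X$ and $h\colon\cyl X\to X$ with $gf=Y$, $\gamma_X h=(X|fg)$, and $hf=\sigma_X f$. The goal is to show $\ell\boxrel f$ for every cofibration $\ell\colon A\to B$ in $\ccc=\cll$, so given an arbitrary lifting square $uf=\ell v$ with $u\colon A\to X$ and $v\colon B\to Y$, I must produce a diagonal $d\colon B\to X$.

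The plan is to transfer the problem to a lifting problem against the auxiliary cofibration $\ell\star\gamma^1$. By Corollary~\ref{cor:starstable} applied to $I\star\gamma^1\subseteq\Lambda$ one obtains $\cll\star\gamma^1\subseteq\lbox{(\rbox\Lambda)}$, and combined with $f\in\rbox\Lambda$ this yields $\ell\star\gamma^1\boxrel f$. I set up a square against $\ell\star\gamma^1$ over $f$ using the ``homotopy lift'' $H:=(\cyl u)h\colon\cyl A\to X$. The identities $\gamma^0_X h=X$ and $\gamma^1_X h=fg$ (obtained from $\gamma_X h=(X|fg)$ by composing with $\iota^0_X$ and $\iota^1_X$) give $\gamma^0_A H=u$ and $\gamma^1_A H=u\cdot fg=\ell vg$, the last equality using the square condition $uf=\ell v$. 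Hence $H$ and $vg\colon B\to X$ are compatible on $A$ along $\gamma^1_A$ and $\ell$, inducing a map $[H,vg]$ from the pushout $\cyl A\cup_A B$ to $X$. The bottom edge $\sigma_B v\colon\cyl B\to Y$ makes the resulting square commute: on the $\cyl A$ side by $Hf=\sigma_A\ell v=\cyl\ell\cdot\sigma_B v$ (using $hf=\sigma_X f$ and naturality of $\sigma$), and on the $B$ side by $vgf=v$.

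Extracting the lift $k\colon\cyl B\to X$, I set $d:=\gamma^0_B k\colon B\to X$. The lower triangle is immediate: $df=\gamma^0_B(kf)=\gamma^0_B\sigma_B v=v$, since $\gamma^0_B\sigma_B=B$. For the upper triangle, naturality of $\gamma^0$ gives $\ell d=\ell\gamma^0_B k=\gamma^0_A(\cyl\ell)k$; the pushout part of the upper-triangle condition on $k$ supplies $(\cyl\ell)k=H$, and this reduces $\ell d$ to $\gamma^0_A H=u$, as already computed.

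The only conceptual pitfall is the pairing of the two ends of the cylinder. The pushout-product must be formed with $\gamma^1$ (the ``$fg$-end'' of the homotopy $h$), while the diagonal is read off at the $\gamma^0$ end (the ``identity end''), so that the homotopy $h$ precisely corrects the na\"{\i}ve guess $vg$---which restricts only to $ufg$, not $u$---into a genuine diagonal satisfying $\ell d=u$ on the nose.
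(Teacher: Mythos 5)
Your proof is correct and follows essentially the same route as the paper: you form the pushout of the cofibration along $\gamma^1$, map it to $X$ via $(\cyl u)h$ and $vg$, lift against $f$ using $\cll\star\gamma^1\subseteq\lbox{(\rbox\Lambda)}$ (from Corollary~\ref{cor:starstable}) together with $f\in\rbox\Lambda$, and read off the diagonal at the $\gamma^0$ end. The only difference is cosmetic (notation and the explicit spelling-out of why $\ell\star\gamma^1\boxrel f$, which the paper leaves terse).
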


\begin{proof}
The direction ''$\Rightarrow$''  is Lemma~\ref{lem:tfib_implies_dsdr}.
For the direction ''$\Leftarrow$'', assume $\map fXY$ to be
a strong dual deformation retract with maps $\map gYX$ and $\map h{CX}X$
as in diagram~\eqref{dia:dsdr}, i.e.~$gf=X$, $(X|fg)=\gamma_X h$
and $hf=\sigma_X f$ . Any commutative square
$$
\xymatrix{
K \ar[r]^u \ar[d]_c & X \ar[d]^f \\
L \ar[r]_v  & Y
}
$$
with $c\in\ccc$ gives rise to the following diagram
$$
\xymatrix{
& X \ar[r]^{\gamma^1_X} & {\cyl X} \ar[ddr]^h & &\\
K \ar[ru]^u \ar[r]^{\gamma^1_K} \ar[d]_c
& {\cyl K} \ar[d]^p \ar[ru]_{\cyl(u)} &&& \\
L \ar[r]_q \ar[dr]_{\gamma^1_L}
& P \ar[rr]^x &&X \ar[dr]^f &  \\
&{\cyl L} \ar[r]_{\sigma_L} & L \ar[r]_v & Y \ar[u]^g \ar@{=}[r] & Y
}
$$
where $P$ is the pushout of $c$ and $\gamma^1_K$ and $\map xPX$ is
induced by $\gamma^1_K \cyl(u) h = u\gamma^1_X h = ufg = cvg$.
Testing against $p$ and $q$ gives the commutativity of the lower right
square in
$$
\xymatrix{
K \ar[r]^u \ar[rd]^{\gamma^0_K} \ar[dd]_c
& X \ar[r]^{\gamma^0_X} & {\cyl X} \ar[drr]^h && \\
& {\cyl K} \ar[r]_p \ar[ru]_{\cyl(u)}
& P \ar[rr]^x \ar[d]|-{c\star\gamma^1} && X \ar[d]^f \\
L \ar[rr]_{\gamma^0_L} && {\cyl L} \ar[r]_{\sigma_L} \ar@{.>}[rru]^d
& L \ar[r]_v & Y
}
$$
and hence $(c\star\gamma^1)\boxrel f$ gives a diagonal $\map d{\cyl L}X$.
The outer diagram then shows that $d':=\map{\gamma^0_L d}LY$
is the desired diagonal.
\end{proof}

\begin{lemma}
\label{lem:nfib+weq_implies_tfib}
Suppose $f\in\rbox\Lambda$ with fibrant codomain.
Then
$$f\in\rbox{\ccc} \iff f\in\cww$$
\end{lemma}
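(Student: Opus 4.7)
The forward direction follows immediately from Corollary~\ref{cor:smith2}. For the converse, suppose $f\colon X\to Y$ lies in $\rbox\Lambda\cap\cww$ with $Y$ fibrant; the plan is to exhibit $f$ as a dual strong deformation retract and then invoke Lemma~\ref{lem:nfib+dsdr_implies_tfib}. Concretely this calls for producing a section $g\colon Y\to X$ with $gf=Y$ together with a map $h\colon\cyl X\to X$ satisfying $\gamma^0_X h=X$, $\gamma^1_X h=fg$ and $hf=\sigma_X f$. Note first that $X$ is automatically fibrant, since $(X\to 1)=f\cdot(Y\to 1)$ is a composite of maps in $\rbox\Lambda$.

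For the section I would apply Corollary~\ref{cor:heq} to the fibrant objects $X$ and $Y$ to obtain $g_0\colon Y\to X$ together with homotopies $H\colon\cyl Y\to Y$ witnessing $Y\htop g_0 f$ and $J\colon\cyl X\to X$ witnessing $fg_0\htop X$. To straighten the first homotopy into an equality I would lift
$$\xymatrix@R=14pt@C=20pt{
Y\ar[r]^{g_0}\ar[d]_{\gamma^1_Y} & X\ar[d]^f\\
\cyl Y\ar[r]_H\ar@{.>}[ru]^K & Y
}$$
along $\gamma^1_Y\in\lbox{(\rbox\Lambda)}$ (Lemma~\ref{lem:gamma_is_anext}) and set $g:=\gamma^0_Y K$; then $gf=\gamma^0_Y H=Y$ on the nose.

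The hard part will be producing $h$ satisfying all three of its required properties, since $\gamma_X$ itself need not lie in $\lbox{(\rbox\Lambda)}$ and hence the naive lift of $(X|fg)$ along $\gamma_X$ against $f$ is unavailable. The crucial observation is the naturality of $\sigma$: running an analogous lift with $\gamma^0_Y$ on the left, top edge $g$ and bottom edge $\sigma_Y$ (in place of $H$) yields $L\colon\cyl Y\to X$ with $\gamma^0_Y L=g$ and $Lf=\sigma_Y$, whence $(\cyl f\cdot L)\cdot f=\cyl f\cdot\sigma_Y=\sigma_X\cdot f$ by naturality, so that $\cyl f\cdot L\colon\cyl X\to X$ is already a homotopy in $X$ with the desired $f$-projection, albeit between $fg$ and $f(\gamma^1_Y L)$ rather than between $X$ and $fg$. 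I would bridge to the correct endpoints by feeding $\cyl f\cdot L$ together with $J$ into the double-cylinder construction from the proof of Lemma~\ref{lem:htop_is_trans}: the map $\gamma^0_X\star\gamma$ lies in $\lbox{(\rbox\Lambda)}$ by Corollary~\ref{cor:starstable} applied to $\Lambda\star\gamma\subseteq\Lambda$, so an appropriate square with $\gamma^0_X\star\gamma$ on the left, $f$ on the right and bottom edge built from $\sigma_{\cyl X}\cdot\sigma_X f$ can be lifted against $f$; a suitable face of the resulting map $\cyl\cyl X\to X$ then serves as $h$, with the condition $hf=\sigma_X f$ maintained throughout by repeated use of naturality of $\sigma$. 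Once $(g,h)$ are in hand, Lemma~\ref{lem:nfib+dsdr_implies_tfib} delivers $f\in\rbox\ccc$.
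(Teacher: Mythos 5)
The overall strategy is the paper's, and the first half of your argument matches it: reduce to exhibiting $f$ as a dual strong deformation retract via Lemma~\ref{lem:nfib+dsdr_implies_tfib}, note that $X$ is fibrant, and straighten the section by lifting against $f$ along $\gamma^1_Y\in\lbox{(\rbox\Lambda)}$ to obtain $g$ with $gf=Y$. The gap is in the construction of $h$, which is the heart of the proof. Your double-cylinder square does not commute: with $\gamma^0_X\star\gamma$ on the left and bottom edge $\sigma_{\cyl X}\sigma_X f$, restricting along the coprojection $\cyl X+\cyl X\to Q\to\cyl\cyl X$ (which is $\gamma_{\cyl X}$) shows that commutativity forces \emph{both} input homotopies to have $f$-projection equal to $\sigma_X f$. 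Your first input $\cyl f\cdot L$ does satisfy this, but the second input $J$ is an arbitrary homotopy $fg_0\htop X$ handed to you by Corollary~\ref{cor:heq}, and nothing --- in particular not naturality of $\sigma$ --- controls $Jf$. (There is also the smaller problem that $\cyl f\cdot L$ and $J$ do not share a common $\gamma^0$-endpoint, $fg$ versus $fg_0$, so the map out of the pushout $Q$ is not even defined without further repair.) Since every homotopy at your disposal that reaches the identity of $X$ has uncontrolled $f$-projection, the correction cannot be completed as described.

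The paper circumvents exactly this obstruction by a different choice of data. It takes a single homotopy $k$ with $\gamma_X k=(X|fg)$, forms the pushout of $\gamma^1_X+\gamma^1_X$ and $\gamma_X$, maps it to $X$ via $(k|kfg)$ and $\sigma_X fg$ (compatible because $\gamma^1_X k=fg$ and $\gamma^1_X(kfg)=fgfg=fg$, using $gf=Y$), and lifts against $f$ along $\gamma^1_X\star\gamma_X$ with bottom edge $\sigma_{\cyl X}kf$ --- \emph{not} $\sigma_{\cyl X}\sigma_X f$. With that bottom edge the commutativity checks reduce to $gf=Y$ and naturality of $\sigma$, so no a priori control on $kf$ is needed; the face $h=\cyl(\gamma^0_X)d$ of the resulting lift then satisfies $\gamma_X h=(X|fg)$ and $hf=\sigma_X f$ automatically. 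That choice of the two homotopies and of the bottom edge is the missing idea in your proposal.
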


\begin{proof}
The direction ''$\Rightarrow$''  is Corollary~\ref{cor:smith2}.
For the direction ''$\Leftarrow$'', assume $\map fXY \in\cww$
and $Y$ fibrant.
By Lemma~\ref{lem:nfib+dsdr_implies_tfib}, it is sufficient to show that $f$
is a dual strong deformation retract. We will construct
$\map gYX$ and $\map h{\cyl X}X$, such that the equations
in diagram~\eqref{dia:dsdr} are satisfied.

Because $f$ and $(Y\to1)$ are in $\rbox\Lambda$, the same holds
for $(X\to1)$. By Corollary~\ref{cor:heq} there exists a $\map gYX$
with $X\htop fg$ and $Y\htop gf$. Let $\map k{\cyl X}X$ be the
homotopy from $X$ to $fg$.
\begin{numlist}
\item
\textsl{One may assume $Y=gf$.}
Consider the following diagram
$$
\xymatrix{
  & Y \ar[r]^g \ar[d]_{\gamma^1_Y} & X \ar[d]^f \\
Y \ar[r]^{\gamma^0_Y} & {\cyl Y} \ar[r] \ar@{.>}[ru]^d & Y
}
$$
where the right square comes from $Y\htop gf$.
The diagonal $\map d{\cyl Y}X$ exists because
$\gamma^1_Y\in\lbox{(\rbox\Lambda)}$ by Lemma~\ref{lem:gamma_is_anext}.
Let $g':=\gamma^0_Yd$.
Then $g'f=Y$ and $(g'|g)=\gamma_Yd$.
Hence $X\htop fg\htop fg'$ and by Lemma~\ref{lem:htop_is_trans}
we have $X\htop fg'$ via some homotopy $k'$.
Now replace $g$ and $k$ by $g'$ and $k'$.

\item
\textsl{There are maps
$\map x{\pushout{\cyl X+\cyl X}{X+X}{\cyl X}}X$
and
$\map d{\cyl\cyl X}X$
such that the following diagram commutes:}
$$
\xymatrix{
{\pushout{\cyl X+\cyl X}{X+X}{\cyl X}}
 \ar[rrr]^x \ar[d]_{\gamma^1_X\star\gamma_X} &&&X\ar[d]^f \\
{\cyl\cyl X} \ar[r]_{\sigma_{\cyl X}} \ar[rrru]^d
& {\cyl X} \ar[r]_k & X \ar[r]_f & Y
}\eqno{(*)}
$$
The equation
\begin{align*}
(\gamma^1_X+\gamma^1_X)(k|kfg)
&= (\gamma^1_Xk|\gamma^1_Xkfg)
= (fg|fgfg)\\
= (X|X)fg 
&= \gamma_X\sigma_X fg
\end{align*}
induces $x$ in the following diagram
$$
\xymatrix{
X+X \ar[r]^-{\gamma_X} \ar[d]_{\gamma^1_X+\gamma^1_X}
& {\cyl X} \ar[r]^{\sigma_X} \ar[d]_j & X \ar[d]^f \\
{\cyl X+\cyl X} \ar[r]^-i \ar@/_/[rrd]_{(k|kfg)}
& Q \ar[rd]^x & Y \ar[d]^g \\
&&X
}
$$
where $Q$ is the pushout of $\gamma^1_X+\gamma^1_X$ and $\gamma_X$
with coprojections $\map i{\cyl X+\cyl X}Q$ and $\map j{\cyl X}Q$.
The commutativity of the outer rectangle of diagram~($*$)
now follows from the following two equations
\begin{align*}
ixf
&= (k|kfg)f = (kf|kf)
= (\cyl X|\cyl X)kf
= \gamma_{\cyl X} \sigma_{\cyl X} kf \\
&= i (\gamma^1_X\star\gamma_X) \sigma_{\cyl X} kf \\
& \\
jxf
&= \sigma_X fgf = \sigma_X \gamma^1_X kf
= \cyl(\gamma^1_X)\sigma_{\cyl X} kf \\
&= j (\gamma^1_X\star\gamma_X) \sigma_{\cyl X} kf 
\end{align*}
Finally the existence of the diagonal $d$ in diagram~($*$)
follows from $\gamma^1_X\star\gamma_X\in\lbox{(\rbox\Lambda)}$.

\item
\textsl{With $x$ and $d$ as in (2), let
$h:=\map{\cyl(\gamma^0_X)d}{\cyl X}X$.
Then the following diagram commutes:}
$$
\xymatrix{
X+X \ar[rr]^{(X|fg)} \ar[d]_{\gamma_X} && X \ar[d]^f \\
{\cyl{X}} \ar[r]_{\sigma_X} \ar[rru]^h& X \ar[r]_f & Y
}
$$
The lower triangle is the equation
$$
\cyl(\gamma^0_X)df = \cyl(\gamma^0_X)\sigma_{\cyl X} kf
= \sigma_X \gamma^0_X kf = \sigma_X f
$$
The upper triangle is the equation
\begin{align*}
\gamma_X \cyl(\gamma^0_X)d
&= (\gamma^0_X+\gamma^0_X)\gamma_{\cyl X} d \\
&= (\gamma^0_X+\gamma^0_X) i (\gamma^1_X\star\gamma_X)d \\
&= (\gamma^0_X+\gamma^0_X) i x \\
&= (\gamma^0_X+\gamma^0_X) (k|kfg) \\
&= (\gamma^0_Xk|\gamma^0_Xkfg) \\
&= (X|fg)
\end{align*}
\end{numlist}
Altogether, $h$ and $g$ satisfy the equations
in diagram~\eqref{dia:dsdr}.
\end{proof}

\begin{corollary}
\label{cor:cof+fibcod+weq_implies_anext}
Let $\map fXY\in \ccc$ with fibrant codomain.
Then
$$
f\in \cww \iff f\in \lbox{(\rbox\Lambda)}
$$
\end{corollary}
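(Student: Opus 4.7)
The backward direction is immediate: Lemma~\ref{lem:anext_implies_weq} gives $\lbox{(\rbox\Lambda)}\subseteq\cww$, and $f\in\ccc$ holds by assumption. So the work is in the forward direction, where I assume $f\in\ccc\intersection\cww$ with $Y$ fibrant.

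The strategy is the standard factorization-and-retract argument. Since $\ckk$ is locally presentable and $\Lambda$ is a set (being a countable union of the sets $\Lambda^n$), Theorem~\ref{thm:lp_is_wfs}(a) lets me factor $f=\ell r$ with $\ell\in\cell(\Lambda)\subseteq\lbox{(\rbox\Lambda)}$ and $r\in\rbox\Lambda$. By Lemma~\ref{lem:anext_implies_weq} we have $\ell\in\cww$, and then the 2-3 property (Remark~\ref{rem:smith1}) gives $r\in\cww$. The codomain of $r$ is $Y$, which is fibrant by hypothesis, so Lemma~\ref{lem:nfib+weq_implies_tfib} applies and yields $r\in\rbox\ccc$.

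Now $f\in\ccc$ and $r\in\rbox\ccc$ give $f\boxrel r$. By the standard retract argument recalled in Remark~(2) after Definition~\ref{def:wfs}, a diagonal for the square
$$
\xymatrix{
X \ar[r]^-\ell \ar[d]_f & Z \ar[d]^r \\
Y \ar@{=}[r] \ar@{.>}[ru] & Y
}
$$
exhibits $f$ as a retract of $\ell$ in $\ckk^\two$. Since $\lbox{(\rbox\Lambda)}$ is closed under retracts (a general property of classes of the form $\lbox\chh$), we conclude $f\in\lbox{(\rbox\Lambda)}$.

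There is no real obstacle here; the argument is purely formal once Lemma~\ref{lem:nfib+weq_implies_tfib} is in hand. The only point to watch is that the fibrancy hypothesis on $Y$ is used exactly once, to promote the ``trivial fibration $\cap$ weak equivalence'' conclusion for the $r$-factor from $\rbox\Lambda\intersection\cww$ to the genuine $\rbox\ccc$; without it, the retract argument would not terminate in the correct class.
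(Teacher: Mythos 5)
Your proof is correct and follows essentially the same route as the paper: factor $f$ as a map in $\lbox{(\rbox\Lambda)}$ followed by a map in $\rbox\Lambda$, use the 2-3 property together with Lemma~\ref{lem:anext_implies_weq} to see the second factor is in $\cww$, apply Lemma~\ref{lem:nfib+weq_implies_tfib} (the fibrancy of $Y$ entering exactly here), and conclude by the retract argument. The only cosmetic difference is that you invoke the $(\cell(\Lambda),\rbox\Lambda)$-factorization explicitly and spell out the 2-3 step, which the paper compresses into a chain of equivalences.
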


\begin{proof}
The direction ''$\Leftarrow$'' is Lemma~\ref{lem:anext_implies_weq}.
For the direction ''$\Rightarrow$'', suppose $f\in\cww$.
Factor $f$ as $ip$ with $i\in\lbox{(\rbox\Lambda)}$
and $p\in\rbox\Lambda$.
Then $p$ satisfies the condition of the previous Lemma and
hence
$$
f\in \cww \iff p\in \cww \iff p\in \rbox\ccc
$$
so that in particular $f\boxrel p$. Therefore $f$ is a retract
of $i$ and lies in $\lbox{(\rbox\Lambda)}$.
\end{proof}

\begin{lemma}
\label{lem:nfib-object_implies_fib}
Let $\cnn = \{ p\in\rbox\Lambda\mid p \text{ has a fibrant codomain} \}$.
Then
\begin{alphlist}
\item
$\ccc\intersection\cww = \ccc\intersection\lbox\cnn$.

\item
$\cnn \subseteq \rbox{(\ccc\intersection\cww)}$

\item
$(X\to1)\in\rbox\Lambda \iff (X\to1)\in \rbox{(\ccc\intersection\cww)}$
\end{alphlist}
\end{lemma}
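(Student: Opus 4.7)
My plan is to prove (b) first, since (a)~$\subseteq$ is immediate from (b) and (a)~$\supseteq$ requires only an additional retract argument, while (c) follows easily in both directions. The single conceptual move I need is a fibrant replacement of the codomain, which will let me invoke Corollary~\ref{cor:cof+fibcod+weq_implies_anext}; once this move is in place everything else is routine.

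For (b), I would fix $p\colon A\to B$ in $\cnn$ and $f\colon Y\to Z$ in $\ccc\intersection\cww$, together with a commutative square having legs $u\colon Y\to A$ and $v\colon Z\to B$. Since $\Lambda$ is a set, Theorem~\ref{thm:lp_is_wfs} factors $(Z\to 1)$ as $Z\to Z'\to 1$ with the first map $j$ in $\cell(\Lambda)\subseteq\lbox{(\rbox\Lambda)}$ and $(Z'\to 1)\in\rbox\Lambda$, so $Z'$ is fibrant. Because $B$ is fibrant, $j\boxrel(B\to 1)$ yields an extension $v'\colon Z'\to B$ with $jv'=v$. Now $fj$ is in $\ccc$ (using $\lbox{(\rbox\Lambda)}\subseteq\ccc$, which follows from $\Lambda\subseteq\ccc$) and in $\cww$ (by the $2$-$3$ property applied to $j\in\lbox{(\rbox\Lambda)}\subseteq\cww$ via Lemma~\ref{lem:anext_implies_weq}); since its codomain is now fibrant, Corollary~\ref{cor:cof+fibcod+weq_implies_anext} upgrades $fj$ to $\lbox{(\rbox\Lambda)}$. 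Thus $fj\boxrel p$ produces $d'\colon Z'\to A$ with $fjd'=u$ and $d'p=v'$, and $d:=jd'$ is the desired diagonal, since $fd=u$ and $dp=jv'=v$.

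For (a), the inclusion $\ccc\intersection\cww\subseteq\ccc\intersection\lbox\cnn$ is exactly (b). For the reverse, given $f\colon X\to Y$ in $\ccc\intersection\lbox\cnn$, I would again fibrantly replace $Y$ by $k\colon Y\to Y'$ in $\lbox{(\rbox\Lambda)}$ with $Y'$ fibrant; then $fk$ is in $\ccc$ and in $\lbox\cnn$ (since $\cnn\subseteq\rbox\Lambda$ gives $k\in\lbox{(\rbox\Lambda)}\subseteq\lbox\cnn$). Factor $fk=iq$ with $i\in\lbox{(\rbox\Lambda)}$ and $q\in\rbox\Lambda$; the codomain of $q$ is the fibrant $Y'$, placing $q$ in $\cnn$, so $fk\boxrel q$. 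The standard retract trick (a map with the lifting against the second factor of any factorization is a retract of the first factor) then exhibits $fk$ as a retract of $i$, and since $i\in\lbox{(\rbox\Lambda)}\subseteq\cww$ with $\cww$ retract-closed, $fk\in\cww$. With $k\in\cww$, the $2$-$3$ property forces $f\in\cww$.

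Part (c) follows with little extra work. For $(\Rightarrow)$, if $(X\to 1)\in\rbox\Lambda$ then $X$ is fibrant, the terminal object $1$ is trivially fibrant, and $(X\to 1)\in\cnn$; applying (b) with this $p$ yields $(X\to 1)\in\rbox{(\ccc\intersection\cww)}$. For $(\Leftarrow)$, I would note that $\Lambda\subseteq\ccc\intersection\cww$ (the inclusion $\Lambda\subseteq\ccc$ follows inductively from the cartesian properties $\ccc\star\gamma^k\subseteq\ccc$ and $\ccc\star\gamma\subseteq\ccc$, while $\Lambda\subseteq\cww$ is Lemma~\ref{lem:anext_implies_weq}), so any right lifting property against $\ccc\intersection\cww$ \emph{a fortiori} holds against $\Lambda$. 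The only real obstacle in the whole argument is spotting the fibrant-replacement reduction; after that, no calculation is needed beyond standard weak-factorization-system bookkeeping.
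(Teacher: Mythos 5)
Your proof is correct and is essentially the paper's own argument: both hinge on fibrantly replacing the codomain and invoking Corollary~\ref{cor:cof+fibcod+weq_implies_anext}, your proof of (b) is exactly the paper's lifting construction (extend along the replacement, lift the composite in $\lbox{(\rbox\Lambda)}$ against $p$, then compose), and your proof of (a)~$\supseteq$ is the paper's retract-of-the-left-factor argument. The only difference is organizational — you prove (b) first and deduce one inclusion of (a) from it, whereas the paper proves (a) and deduces (b) and (c) — which does not change the substance.
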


\begin{proof}
First observe that because of
$\lbox{(\rbox\Lambda)}\subseteq\ccc\intersection\cww$
(Lemma~\ref{lem:anext_implies_weq} together with condition (b) of
Definition~\ref{def:cart})
we have $\rbox\Lambda\supseteq\rbox{(\ccc\intersection\cww)}$
and hence the implication ''$\Leftarrow$'' in (c) always holds.
The implication ''$\Rightarrow$'' in (c) follows from (b).
Moreover, (a) implies (b) via
$\ccc\intersection\lbox\cnn\subseteq\lbox\cnn$.
So it is enough to show (a).
Let $\map cKL$ be any map in $\ccc$.
Factor $(L\to1)$ through some $\map uL{L'}$ with
$u\in\lbox{(\rbox\Lambda)}$ and $L'$ fibrant.
Then in particular $u\in\ccc$ with fibrant codomain and
hence $u\in\cww$ by Corollary~\ref{cor:cof+fibcod+weq_implies_anext}.
Therefore
$$
c\in\cww \iff cu\in\cww \iff cu\in\lbox{(\rbox\Lambda)}
\eqno{(*)}
$$
where the second equivalence again results from
Corollary~\ref{cor:cof+fibcod+weq_implies_anext}.

\begin{numlist}
\item
Suppose $c\in\cww$.
Consider any $p\in\cnn$ and maps $\map xKX$ and $\map yLY$ as in
the following diagram:
% $$
% \xymatrix{
% K \ar[r]^x \ar[d]_c & X \ar[d]^p \\
% L \ar[r]_y \ar[d]_u & Y \\
% L' \ar@{.>}@/^/[ruu]^(0.7)d \ar@{.>}[ru]_{y'} &
% }
% $$
$$
\xymatrix{
K \ar[r]^c \ar[d]_x & L \ar[r]^u \ar[d]^y & 
L' \ar@{.>}@/_/[dll]_(0.7)d \ar@{.>}[ld]^{y'} \\
X \ar[r]_p & Y & 
}
$$
Then $\map {y'}{L'}Y$ exists because $u\boxrel(Y\to1)$
and $\map d{L'}X$ exists because of the above ($*$).
The equations $cud=x$ and  $udp=uy'=y$ then exhibit $\map {ud}LX$
as the desired diagonal.

\item
Suppose $c\in\lbox\cnn$.
Factor $cu$ as $cu=xp$ with $x\in\lbox{(\rbox\Lambda)}$ and
$p\in\rbox\Lambda$. Because $u$ has fibrant codomain, the
same holds for $p$ and hence $p\in\cnn$.
Because $u\in\lbox{(\rbox\Lambda)}\subseteq\lbox\cnn$, also
$cu\in\lbox\cnn$. Therefore $cu$ is a retract of $p$ and
hence $cu\in\lbox{(\rbox\Lambda)}\subseteq \cww$.
Now by ($*$) above, $c\in\cww$.\qedhere
\end{numlist}
\end{proof}

\begin{corollary}
\label{cor:smith3}
$\ccc\intersection\cww$ is stable under pushouts, transfinite composition
and retracts.
\end{corollary}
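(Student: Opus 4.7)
The plan is to deduce this immediately from part (a) of Lemma~\ref{lem:nfib-object_implies_fib}, which identifies $\ccc\intersection\cww$ with $\ccc\intersection\lbox\cnn$. The point is that both factors of this intersection are of the form $\lbox{\chh}$ for some class $\chh$ of maps: $\ccc = \lbox{(\rbox I)}$ by hypothesis, and $\lbox\cnn$ is of this form by construction.

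Recall from the remark following the definition of $\boxrel$ that any class of the form $\lbox{\chh}$ is stable under pushouts, retracts in $\ckk^\two$, and transfinite compositions of smooth chains. Consequently both $\ccc$ and $\lbox\cnn$ enjoy these three closure properties, and therefore so does their intersection $\ccc\intersection\lbox\cnn$. By Lemma~\ref{lem:nfib-object_implies_fib}(a) this intersection coincides with $\ccc\intersection\cww$, and the claim follows.

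There is no real obstacle here; the entire work has already been done in establishing Lemma~\ref{lem:nfib-object_implies_fib}. The corollary is essentially just the observation that intersecting two ``left-complement'' classes preserves the standard closure properties, combined with the nontrivial identification of $\ccc\intersection\cww$ as such an intersection.
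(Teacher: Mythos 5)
Your proof is correct and follows exactly the paper's own argument: express $\ccc\intersection\cww$ as $\ccc\intersection\lbox\cnn$ via Lemma~\ref{lem:nfib-object_implies_fib}(a), then use that each of the two classes, being of the form $\lbox{\chh}$, is stable under pushouts, transfinite composition and retracts.
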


\begin{proof}
By part (a) of the previous Lemma, $\ccc\intersection\cww$ can
be expressed as the intersection of two classes, each of which
is stable under these operations.
\end{proof}

It now remains to verify condition~(4).
We want to express $\cww$ as the full preimage (under some accessible functor)
of the class of homotopy equivalences with respect to some final cylinder.
Observe that the cylinder used in the construction may not be final.

\begin{lemma}
There is a final refinement $(\cyl',\gamma',\sigma')$ of
$(\cyl,\gamma,\sigma)$ such that for any two maps $\map{f,g}XY$
with fibrant codomain we have
$$ f\htop g \pmod{\cyl'} \iff  f\htop g \pmod\cyl $$
In particular, the two cylinders agree on the notion of homotopy
equivalences between fibrant objects.
\end{lemma}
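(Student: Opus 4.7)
The plan is to take $(\cyl',\gamma',\sigma')$ to be the final refinement produced by Observation~\ref{obs:cylinders}(c): apply a functorial $(\ccc,\rbox\ccc)$-factorization to each $\sigma_X$, writing $\sigma_X = \varphi_X\sigma'_X$ with $\varphi_X\in\ccc$ and $\sigma'_X\in\rbox\ccc$, then set $\cyl' X$ to be the middle object, put $\gamma'_X := \gamma_X\varphi_X$, and observe that the natural comparison $\varphi\colon\cyl\to\cyl'$ witnesses that $(\cyl',\gamma',\sigma')$ refines $(\cyl,\gamma,\sigma)$; finality holds because $\sigma'_X\in\rbox\ccc$. The direction $f\htop g\pmod{\cyl'} \Rightarrow f\htop g\pmod\cyl$ is then immediate from Observation~\ref{obs:cylinders}(d): given $h'\colon\cyl' X\to Y$ with $\gamma'_X h' = (f|g)$, the map $h := \varphi_X h'$ satisfies $\gamma_X h = \gamma_X\varphi_X h' = \gamma'_X h' = (f|g)$.

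For the converse with $Y$ fibrant, it suffices to lift a given $h\colon\cyl X\to Y$ (satisfying $\gamma_X h = (f|g)$) along $\varphi_X$ to some $h'\colon\cyl' X\to Y$ with $\varphi_X h' = h$, since then $\gamma'_X h' = \gamma_X\varphi_X h' = \gamma_X h = (f|g)$. This reduces to verifying $\varphi_X\boxrel(Y\to 1)$. The crucial observation is that $\varphi_X\in\ccc\cap\cww$: by Corollary~\ref{cor:sigma_is_weq} we have $\sigma_X\in\cww$, and by Corollary~\ref{cor:smith2} we have $\sigma'_X\in\rbox\ccc\subseteq\cww$, so the 2-3 property (Remark~\ref{rem:smith1}) forces $\varphi_X\in\cww$, while $\varphi_X\in\ccc$ by construction. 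Lemma~\ref{lem:nfib-object_implies_fib}(a) then places $\varphi_X\in\lbox\cnn$. Since $Y$ is fibrant, $(Y\to 1)\in\rbox\Lambda$, and its codomain (the terminal object) is trivially fibrant, so $(Y\to 1)\in\cnn$ and the desired diagonal exists.

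The main obstacle I anticipate is resisting the temptation to try to place $\varphi_X$ in $\lbox{(\rbox\Lambda)}$ directly via Corollary~\ref{cor:cof+fibcod+weq_implies_anext}; this route would require $\cyl' X$ itself to be fibrant, which is not guaranteed for arbitrary $X$. The resolution is that only lifting against the subclass $\cnn$ of $\rbox\Lambda$ is required, and this weaker property is supplied by part~(a) of Lemma~\ref{lem:nfib-object_implies_fib} without any fibrancy hypothesis on the source. Finally, when $X$ and $Y$ are both fibrant, the $\htop$ relations for $\cyl$ and $\cyl'$ agree on each of the hom-sets $\ckk(X,Y)$, $\ckk(Y,X)$, $\ckk(X,X)$, $\ckk(Y,Y)$ (all of which have fibrant codomain), so their symmetric-transitive closures $\htopt$ also coincide; since a homotopy equivalence is precisely a map admitting a two-sided $\htopt$-inverse, the two notions of homotopy equivalence between fibrant objects agree.
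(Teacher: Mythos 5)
Your proposal is correct and follows essentially the same route as the paper: refine by a functorial $(\ccc,\rbox\ccc)$-factorization of $\sigma$, show the comparison map lies in $\ccc\intersection\cww$ via the 2-3 property together with Corollaries~\ref{cor:smith2} and~\ref{cor:sigma_is_weq}, and then lift the homotopy against $(Y\to 1)$ using Lemma~\ref{lem:nfib-object_implies_fib}. The only (inessential) difference is that you invoke part~(a) of that lemma together with the remark that $(Y\to 1)\in\cnn$, while the paper uses part~(c) to conclude $(Y\to1)\in\rbox{(\ccc\intersection\cww)}$ directly.
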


\begin{proof}
Let $\sigma=\lambda\rho$ be a functorial $(\ccc,\rbox\ccc)$-factorization of
$\sigma$ and for each object $X$ set $\cyl' X = \cod(\lambda_X)$,
$\gamma'_X =\gamma_X\lambda_X$ and $\sigma'_X = \rho_X$.
Then $(\cyl',\gamma',\sigma')$ is a final refinement of 
$(\cyl,\gamma,\sigma)$ and the direction ''$\Rightarrow$'' was already
noted in part (d) of Observation~\ref{obs:cylinders}.

Now assume $f\htop g \pmod\cyl$ for maps $\map{f,g}XY$ with $Y$ cofibrant.
Let $\map h{\cyl X}Y$ be a homotopy from $f$ to $g$ and consider the
square:
$$
\xymatrix{
{\cyl X}  \ar[r]^h \ar[d]_{\lambda_X} & Y \ar[d] \\
{\cyl' X} \ar[r] & 1
}
$$
Corollary~\ref{cor:smith2} gives $\rho_X\in\rbox\ccc\subseteq\cww$ and
Corollary~\ref{cor:sigma_is_weq} gives $\lambda_X\rho_X=\sigma_X\in\cww$.
Therefore the 2-3 property of $\cww$ forces $\lambda_X\in\cww$ and
hence $\lambda_X\in\ccc\intersection\cww$.
By part (c) of Lemma~\ref{lem:nfib-object_implies_fib} we have
$(Y\to1)\in\rbox{(\ccc\intersection\cww)}$.
This gives the desired diagonal $\map d{\cyl' X}Y$ of the above square,
establishing $f\htop g \pmod{\cyl'}$.
\end{proof}

\begin{corollary}
\label{cor:smith4}
The class $\cww$ satisfies the solution set condition.
\end{corollary}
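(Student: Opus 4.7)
My plan is to exhibit $\cww$ as the full preimage of an isomorphism-closed full subcategory of $\ckk^\two$ which is itself the full image of an accessible functor, and then to invoke Lemma~\ref{lem:preimage_of_replete_reductive}(b) to conclude that $\cww$ is the full image of an accessible functor. The solution set condition then follows immediately from Lemma~\ref{lem:reductive_implies_solution-set}.

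The accessible functor I will use is a fibrant replacement functor $R\colon\ckk\to\ckk$. Using Theorem~\ref{thm:lp_is_wfs}(a), I choose a functorial $(\lbox{(\rbox\Lambda)},\rbox\Lambda)$-factorization of every unique map $X\to 1$, and set $RX$ to be the intermediate object; by Theorem~\ref{thm:lp_is_wfs}(b) this functor $R$ is accessible, and hence so is the induced functor $R^\two\colon\ckk^\two\to\ckk^\two$ sending $f$ to $Rf$.

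The central step is to verify that, for every $\map fXY$, one has $f\in\cww$ iff $Rf$ is a homotopy equivalence with respect to the final cylinder $\cyl'$ from the preceding Lemma. The naturality square relating $f$ and $Rf$ has its horizontal edges in $\lbox{(\rbox\Lambda)}\subseteq\cww$ by Lemma~\ref{lem:anext_implies_weq}, so the 2-3 property of $\cww$ reduces $f\in\cww$ to $Rf\in\cww$. Since $RX$ and $RY$ are fibrant, Corollary~\ref{cor:heq} reads $Rf\in\cww$ as the existence of a $\cyl$-homotopy inverse, and the preceding Lemma then translates this into the analogous statement for $\cyl'$.

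Now let $\chh\subseteq\ckk^\two$ denote the class of homotopy equivalences with respect to $\cyl'$. Theorem~\ref{thm:heq_is_reductive}(c) identifies $\chh$ as the full image of an accessible functor, and $\chh$ is evidently isomorphism-closed. The equivalence established above reads $\cww=(R^\two)^{-1}\chh$, so Lemma~\ref{lem:preimage_of_replete_reductive}(b) exhibits $\cww$ as the full image of an accessible functor, and Lemma~\ref{lem:reductive_implies_solution-set} yields the solution set condition. The delicate point is the middle paragraph: weak equivalences and homotopy equivalences only coincide between fibrant objects (Corollary~\ref{cor:heq}), so the role of $R^\two$ is precisely to bridge this gap, converting the a priori intractable description of $\cww$ into the accessible description furnished by Theorem~\ref{thm:heq_is_reductive}(c).
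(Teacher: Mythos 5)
Your proposal is correct and follows essentially the same route as the paper: the paper's proof also uses the fibrant replacement functor for $(\lbox{(\rbox\Lambda)},\rbox\Lambda)$ (accessible by Theorem~\ref{thm:funcfact_is_accessible}(b)), the induced functor on $\ckk^\two$, the equivalence $f\in\cww\iff Lf\in\cww\iff Lf$ is a homotopy equivalence (via the 2-3 property, Lemma~\ref{lem:anext_implies_weq} and Corollary~\ref{cor:heq}), the passage to the final refinement $\cyl'$ from the preceding Lemma, and then Theorem~\ref{thm:heq_is_reductive}(c) together with Lemmas~\ref{lem:preimage_of_replete_reductive}(b) and~\ref{lem:reductive_implies_solution-set}. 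No gaps to report.
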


\begin{proof}
By Lemma~\ref{lem:reductive_implies_solution-set}, it is sufficient
to exhibit $\cww$ as the full image of some accessible functor.
Let $\map L{\ckk}{\ckk}$ be the fibrant replacement functor given by
the weak factorization system $(\lbox{(\rbox\Lambda)},\rbox\Lambda)$,
which is accessible by part (b) of Theorem~\ref{thm:funcfact_is_accessible}.
Via composition, $L$ induces a functor
$\map{L_\ast}{\ckk^\two}{\ckk^\two}$,
which is also accessible because colimits in $\ckk^\two$ are
calculated pointwise.

Let $\map fXY$ be any map.
\begin{numlist}
\item
\textsl{$f\in\cww\iff Lf\in\cww$}

Consider the square
$$
\xymatrix{
X \ar[r]^{\ell_X} \ar[d]_f & LX \ar[d]^{Lf} \\
Y \ar[r]^{\ell_Y} & LY 
}
$$
where $\ell_X,\ell_Y\in\lbox{(\rbox\Lambda)}$ are given by the
functorial factorization. By Lemma~\ref{lem:anext_implies_weq}
$\ell_X$ and $\ell_X$ lie in $\cww$. Now the 2-3 property
of $\cww$ gives the above equivalence.

\item
\textsl{$Lf\in\cww \iff Lf \text{ is a homotopy equivalence} \pmod\cyl$}

By construction, $Lf$ has fibrant domain and codomain.
The equivalence now follows from Corollary~\ref{cor:heq}.
\end{numlist}
Let $(\cyl',\gamma',\sigma')$ be a final refinement of
$(\cyl,\gamma,\sigma)$ as in the previous Lemma.
Then point (2) still remains valid with $\cyl'$ in place of $\cyl$.
Therefore $\cww$ is the preimage, under the accessible functor $L_*$,
of the class of homotopy equivalences determined by $\cyl'$.
By part (c) of Theorem~\ref{thm:heq_is_reductive} that class
is the full image of an accessible functor. It is also isomorphism-closed.
Hence the same holds for $\cww$ by
Lemma~\ref{lem:preimage_of_replete_reductive}.
\end{proof}

\begin{proof}[Proof of Theorem~\ref{thm:3}]
By Remark~\ref{rem:smith1}, Corollary~\ref{cor:smith2},
Corollary~\ref{cor:smith3} and Lemma~\ref{cor:smith4},
the classes $\ccc$ and $\cww$ satisfy the conditions of
Smith's Theorem \ref{thm:smith}.
\end{proof}

\section{Left determination}

\nobreak\noindent
Let $\ckk$ be any complete and cocomplete category.
Given a fixed class $\ccc$ of maps in $\ckk$, consider the following
conditions on a class $\cww$ of maps:

\begin{romlist}

\item
$\cww$ has the 2-3 property.

\item
$\cww$ is closed under retracts in $\ckk^\two$.

\item
$\rbox\ccc \subseteq \cww$.

\item
$\ccc \intersection \cww$ is closed under pushouts and transfinite
composition.
\end{romlist}

\noindent
Then each condition is stable under intersections, i.e.~if it
is satisfied by every $\cww_i$ in some (possibly large)
family $\cww_i$ ($i\in I$), then it is also satisfied by their intersection.
Also, whenever $\ccc$ and $\cww$ are part of a model structure
$(\ccc,\cww,\cff)$, then $\cww$ satisfies all of the above conditions.

The following Definition was given by Cisinski~\cite{CISI02}*{D\'efinition~3.4}
for the special case where $\ckk$ is a (Grothendieck) topos and
$\ccc$ is the class of all monomorphisms.

\begin{definition}
\label{def:basic_loc}
Let $\ccc$ be a fixed class of maps in $\ckk$.
The class $\cww$ is a \df{localizer} for $\ccc$ if it satisfies
conditions (i),(iii) and (iv) above. For any given class $S$ of
maps, $\cww(S)$ denotes the smallest localizer containing $S$.
In particular $\cww(\emptyset)$ is the smallest localizer.
\end{definition}

The following Definition was given by
Rosick\'y and Tholen~\cite{ROTH03}*{Definition~2.1}.

\begin{definition}
\label{def:left_determined}
Given a class $\ccc$ of maps in $\ckk$, write $\cww_\ccc$
for the smallest class satisfying conditions (i)--(iv) above.
A model structure $(\ccc,\cww,\cff)$ is \df{left determined}
if $\cww=\cww_\ccc$.
\end{definition}

\begin{remark}
One always has $\cww(\emptyset)\subseteq\cww_\ccc$ and also
$\cww_\ccc\subseteq\cww$ for any model structure $(\ccc,\cww,\cff)$.
As in Definition~\ref{def:basic_loc}, one can also consider
the smallest class $\cww_{S,\ccc}$ of maps satisfying conditions
(i)--(iv) and containing a class of maps $S$.
Then $\cww(S)\subseteq\cww_{S,\ccc}$ and
$\cww_{S,\ccc}\subseteq\cww$ for any model structure $(\ccc,\cww,\cff)$
satisfying $S\subseteq\cww$.
In particular, whenever $\ccc$ and $\cww(S)$ give a model structure
then $\cww(S)=\cww_{S,\ccc}$.
\end{remark}

We now return to the situation of the previous section, so we
assume from now on that $\ckk$ is locally presentable.
The following Lemma and Theorem are adapted from
\cite{CISI02}*{Proposition~3.8} and \cite{CISI02}*{Th\'eor\`eme~3.9}.

\begin{lemma}
Let $(\ccc,\rbox\ccc)$ be a cofibrant weak factorization system
in $\ckk$, generated by a subset $I\subseteq\ccc$.
Let $(\cyl,\gamma,\sigma)$ be a cartesian cylinder and let
$S\subseteq\ccc$ be a set of maps.
Then $\cww(\cyl,S,I) = \cww(\Lambda(\cyl,S,I))$.
\end{lemma}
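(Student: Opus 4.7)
The plan is to prove both inclusions separately.

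For the direction $\cww(\Lambda(\cyl,S,I)) \subseteq \cww(\cyl,S,I)$, I would verify directly that $\cww(\cyl,S,I)$ is a localizer containing $\Lambda := \Lambda(\cyl,S,I)$, so it must contain the smallest such localizer. The 2-3 property is Remark~\ref{rem:smith1}, the inclusion $\rbox\ccc \subseteq \cww(\cyl,S,I)$ is Corollary~\ref{cor:smith2}, and closure of $\ccc \cap \cww(\cyl,S,I)$ under pushouts and transfinite composition is Corollary~\ref{cor:smith3}. Finally, $\Lambda \subseteq \lbox{(\rbox\Lambda)} \subseteq \cww(\cyl,S,I)$ by Lemma~\ref{lem:anext_implies_weq}.

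The reverse inclusion is the substantial direction. A useful preliminary is that the cartesian hypothesis forces $\Lambda \subseteq \ccc$: by induction on the recursive definition of $\Lambda$, using that $\cll\star\gamma^k$ and $\cll\star\gamma$ remain in $\ccc$. Hence $\Lambda \subseteq \ccc \cap \cww(\Lambda)$, and localizer axiom (iv) upgrades this to $\cell(\Lambda) \subseteq \ccc \cap \cww(\Lambda) \subseteq \cww(\Lambda)$. Now given $f\colon X\to Y$ in $\cww(\cyl,S,I)$, I would apply the small object argument from Theorem~\ref{thm:lp_is_wfs} with generating set $\Lambda$ twice: first, to factor $(Y\to 1)$ as $Y \xrightarrow{\ell_Y} LY \to 1$ with $\ell_Y \in \cell(\Lambda)$ and $(LY\to 1)\in \rbox\Lambda$ (so $LY$ is fibrant); second, to factor the composite $f \cdot \ell_Y$ as $X \xrightarrow{i} Z \xrightarrow{p} LY$ with $i \in \cell(\Lambda)$ and $p \in \rbox\Lambda$. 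Since $\rbox\Lambda$ is closed under composition, $Z$ is fibrant as well.

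The conclusion comes in two moves. On the one hand, $\ell_Y$ and $i$ lie in $\cell(\Lambda) \subseteq \lbox{(\rbox\Lambda)} \subseteq \cww(\cyl,S,I)$ by Lemma~\ref{lem:anext_implies_weq}, so the 2-3 property in $\cww(\cyl,S,I)$ forces $p \in \cww(\cyl,S,I)$. Since $p \in \rbox\Lambda$ has fibrant codomain, Lemma~\ref{lem:nfib+weq_implies_tfib} refines this to $p \in \rbox\ccc$, which lies in $\cww(\Lambda)$ by localizer axiom (iii). On the other hand, $\ell_Y$ and $i$ also lie in $\cell(\Lambda) \subseteq \cww(\Lambda)$ by the preliminary observation. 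Two applications of 2-3 in $\cww(\Lambda)$ then give $f \cdot \ell_Y = i \cdot p \in \cww(\Lambda)$ and finally $f \in \cww(\Lambda)$, as desired.

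The main obstacle to anticipate is that a localizer, as defined in this paper, is not required to be closed under retracts, so the tempting sound bite ``$\lbox{(\rbox\Lambda)} \subseteq \cww(\Lambda)$'' is not immediately available (it would involve the retract step of the small object argument). The remedy is to insist on pulling the left factors out of $\cell(\Lambda)$ itself, since that class is built purely from pushouts and transfinite compositions, which is exactly what axiom (iv) delivers.
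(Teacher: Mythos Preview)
Your proof is correct and follows essentially the same strategy as the paper: both directions hinge on $\Lambda\subseteq\cww$ (via Lemma~\ref{lem:anext_implies_weq}) for the easy inclusion, and on producing a $(\cell(\Lambda),\rbox\Lambda)$-factorization through a fibrant object so that Lemma~\ref{lem:nfib+weq_implies_tfib} converts the $\rbox\Lambda$-part into an element of $\rbox\ccc\subseteq\cww(\Lambda)$. Your version is slightly more economical---you factor only $(Y\to1)$ and then $f\ell_Y$, whereas the paper factors both terminal maps functorially and then the induced $f'$---and you make explicit the point (left tacit in the paper) that $\cell(\Lambda)\subseteq\cww(\Lambda)$ requires $\Lambda\subseteq\ccc$ plus axiom~(iv), not a retract argument.
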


\begin{proof}
We will again write $\Lambda$ for $\Lambda(\cyl,S,I)$
and $\cww$ for $\cww(\cyl,S,I)$.
The inclusion $\cww(\Lambda)\subseteq \cww$ holds
because $\Lambda\subseteq \cww$ by Lemma~\ref{lem:anext_implies_weq}.

Now given any $\map fXY \in\cww$,
use $(\cell(\Lambda),\rbox\Lambda)$-factorizations of $(X\to1)$
and $(Y\to1)$ to obtain a diagram
$$
\xymatrix{
X \ar[r]^{\ell_X} \ar[dd]_f & X' \ar[dd]_{f'} \ar[rd]^z & \\
&& Z \ar[dl]^y \\
Y \ar[r]_{\ell_Y} & Y' & 
}
$$
where $\ell_X$ and $\ell_Y$ are in $\cell(\Lambda)$, $X'$ and $Y'$ are
fibrant, $f'$ is induced by this factorization and $f'=zy$ is in turn
a factorization with $z\in\cell(\Lambda)$ and $y\in\rbox\Lambda$.
In particular $\ell_X$, $\ell_Y$ and $z$ are in $\cww(\Lambda)$.
Then the 2-3 property gives
$$
f\in\cww
\follows
y\in\cww
\iff
y\in\rbox\ccc
\follows
y\in\cww(\Lambda)
\follows
f\in\cww(\Lambda)
$$
where the equivalence in the middle is given by
Lemma~\ref{lem:nfib+weq_implies_tfib}
\end{proof}

\begin{theorem}
Let $(\ccc,\rbox\ccc)$ be a cofibrant weak factorization system in $\ckk$
and $S$ be an arbitrary set of maps (not necessarily included in $\ccc$).
Suppose that $(\cyl,\gamma,\sigma)$ is a cartesian cylinder such that
all components of $\sigma$ lie in $\cww(S)$.
Then, setting $\cww:=\cww(S)$ and $\cff:=\rbox{(\ccc\intersection\cww)}$
gives a cofibrant model structure $(\ccc,\cww,\cff)$ on $\ckk$.
Also $\cww(S)=\cww_{S,\ccc}$.
\end{theorem}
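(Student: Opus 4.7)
The plan is to reduce to Theorem~\ref{thm:3}. Since the given $S$ need not consist of cofibrations, I would first factor each $s \in S$ as $s = c_s r_s$ with $c_s \in \ccc$ and $r_s \in \rbox\ccc$. The localizer condition (iii) gives $r_s \in \cww(S)$, and the 2-3 property then forces $c_s \in \cww(S)$. Setting $S' := \{c_s : s \in S\} \subseteq \ccc \cap \cww(S)$, I would apply Theorem~\ref{thm:3} to $S'$ to obtain a cofibrant model structure $(\ccc, \cww', \cff')$ with $\cww' = \cww(\cyl, S', I)$. By the preceding lemma, $\cww' = \cww(\Lambda)$ where $\Lambda := \Lambda(\cyl, S', I)$, so the task reduces to showing $\cww' = \cww(S)$.

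One direction, $\cww(S) \subseteq \cww'$, is immediate: $\cww'$ is the weak-equivalence class of a model structure, hence a localizer containing $S' \cup \rbox\ccc$. For each $s = c_s r_s$ both factors lie in $\cww'$, so the 2-3 property gives $S \subseteq \cww'$, and minimality of $\cww(S)$ yields the inclusion.

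For $\cww' \subseteq \cww(S)$, since $\cww(S)$ is a localizer and $\cww(\Lambda)$ is the smallest such containing $\Lambda$, it suffices to prove $\Lambda \subseteq \cww(S)$. I would proceed by induction, establishing $\Lambda^n \subseteq \ccc \cap \cww(S)$ for each $n$. In the base case $S' \subseteq \cww(S)$ holds by construction; for $I \star \gamma^k$, the identity $\gamma^k_X \sigma_X = 1_X$ together with $\sigma_X \in \cww(S)$ forces $\gamma^k_X \in \cww(S)$, while the cartesian assumption with cofibrancy provides $\gamma^k_X \in \ccc$. Given $f \in I$, the map $\gamma^k_Y$ factors as the pushout of $\gamma^k_X$ along $f$ followed by $f \star \gamma^k$; the first factor lies in $\ccc \cap \cww(S)$ by condition (iv), and 2-3 together with $\gamma^k_Y \in \cww(S)$ then places $f \star \gamma^k$ in $\cww(S)$, while cartesianness puts it in $\ccc$. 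For the inductive step $\Lambda^{n+1} = \Lambda^n \star \gamma$ the argument is parallel: for $f \in \Lambda^n \subseteq \ccc \cap \cww(S)$, the naturality $\cyl(f)\sigma_Y = \sigma_X f$ and 2-3 give $\cyl(f) \in \cww(S)$, and factoring $\cyl(f)$ through $(Y+Y) \cup_{X+X} \cyl X$ as the pushout of $f+f$ along $\gamma_X$ followed by $f \star \gamma$ — after observing $f+f \in \ccc \cap \cww(S)$ because coproducts are iterated pushouts — one final 2-3 application yields $f \star \gamma \in \cww(S)$, with cartesianness providing $\ccc$ membership.

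The concluding equality $\cww(S) = \cww_{S,\ccc}$ is then immediate from the preceding remark, since $(\ccc, \cww(S), \cff)$ has just been shown to be a model structure. The main obstacle I foresee is the inductive step above: one must carefully track membership in both $\ccc$ and $\cww(S)$ through a chain of pushouts and 2-3 invocations routed via the $\sigma$-naturality square and the pushout diagram defining $f \star \gamma$. The arguments are mechanical but bookkeeping-intensive; the key conceptual move is recognising that condition (iv) transports cofibrant weak equivalences across coproducts and pushouts in precisely the form required to close the induction.
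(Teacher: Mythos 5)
Your proposal is correct and follows essentially the same route as the paper: reduce to $S'\subseteq\ccc$ via the $(\ccc,\rbox\ccc)$-factorization of each $s\in S$, invoke the preceding lemma together with Theorem~\ref{thm:3}, and show $\Lambda\subseteq\cww(S)$ by induction using $\sigma_X\in\cww(S)$, the 2-3 property, and closure of $\ccc\intersection\cww(S)$ under pushouts. The only (harmless) deviations are cosmetic: you obtain $\cww(S)\subseteq\cww'$ by noting $\cww'$ is a localizer containing $S$ rather than via $S'\subseteq\Lambda(S')$, and you get $\cyl f\in\cww(S)$ from the $\sigma$-naturality square where the paper uses the $\gamma^0$-naturality square.
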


\begin{proof}
First observe, that one may assume $S\subseteq\ccc$:
factor each $s\in S$ as $s=c_sr_s$ with $c_s\in\ccc$ and $r_s\in\rbox\ccc$
and consider $S':=\{c_s\mid s\in S\}$. Any given localizer contains $S$
if and only if it contains $S'$, because all the $r_s$ lie in it.
Therefore $\cww(S')=\cww(S)$.

Now assume $S\subseteq\ccc$. Let $I$ be some generating subset of $\ccc$.
By the previous Lemma, it is enough to show $\cww(\Lambda(\cyl,S,I))=\cww(S)$.
We will write $\Lambda(S)$ for $\Lambda(\cyl,S,I)$.

The inclusion $S\subseteq\Lambda(S)$ already forces
$\cww(S) \subseteq \cww(\Lambda(S))$ and therefore it
remains to show $\Lambda(S)\subseteq\cww(S)$.

By assumption, the components of $\sigma$ lie in $\cww(S)$.
Consequently the components of $\gamma^0$ and $\gamma^1$ lie
in $\ccc\intersection\cww(S)$.
We will now show $\Lambda^n(S) \subseteq \cww(S)$ for all $n\geq0$.

\begin{numlist}
\item
We already have $S\subseteq\cww(S)$.
Let $\map fXY$ be in $\ccc$ and consider the following diagram
used for the definition of $f\star\gamma^0$
$$
\xymatrix{
X \ar[d]_f \ar[r]^{\gamma^0_X} & {\cyl X} \ar[d]^p \ar@/^/[ddr]^{\cyl f}& \\
Y \ar[r]_q \ar@/_/[drr]_{\gamma^0_Y} & Q \ar[dr]|{f\star\gamma^0}& \\
&& {\cyl Y}
}
$$
where $Q$ is the pushout of $f$ and $\gamma^0_X$.
Because $\gamma^0_X\in \ccc\intersection\cww(S)$ we have $q\in\cww(S)$.
Together with $\gamma^0_Y\in\cww(S)$ this
gives $f\star\gamma^0\in\cww(S)$.
In the same way $f\star\gamma^1\in\cww(S)$.
Hence $I\star\gamma^0$ and $I\star\gamma^1$ are contained in $\cww(S)$

\smallskip
\item
Assume $\Lambda^n(S)\subseteq\cww(S)$ and let
$\map fXY$ be in $\Lambda^n(S)$.
By assumption $f\in\cww(S)$ and hence $f$ lies in $\ccc\intersection\cww(S)$.
Then the same holds for $f+X$ and $Y+f$ (being pushouts of $f$), as for
their composition $f+f=(f+X)(Y+f)$.
Moreover $f\in\ccc\intersection\cww(S)$ together with
$\gamma^0_X,\gamma^0_Y\in\cww(S)$ force $\cyl f\in\cww(S)$
by the 2-3 property.
Altogether, in the following diagram used for the definition of
$f\star\gamma$
$$
\xymatrix{
X+X \ar[d]_-{f+f} \ar[r]^{\gamma_X} & {\cyl X} \ar[d]^r \ar@/^/[ddr]^{\cyl f}& \\
Y+Y \ar[r] \ar@/_/[drr]_{\gamma_Y} & Q \ar[dr]|{f\star\gamma}& \\
&& {\cyl Y}
}
$$
both maps $r$ and $\cyl f$ lie in $\cww$, and hence $f\star\gamma\in\cww$.
\qedhere
\end{numlist}
\end{proof}

In view of Corollary~\ref{cor:sigma_is_weq} it is clear that the
condition of $\sigma$ having its components in $\cww(S)$ cannot be
omitted from the Theorem. This condition will always be satisfied
(regardless of the $\cww(S)$ in question) whenever the cylinder is final,
i.e.~when $\sigma$ has its components in $\rbox\ccc$.

\begin{corollary}
\label{cor:final_gives_minimal}
Let $(\ccc,\rbox\ccc)$ be a cofibrant weak factorization system in $\ckk$
and suppose that there is a final cartesian cylinder for $(\ccc,\rbox\ccc)$.
Then $\ccc$, $\cww(S)$ and $\rbox{(\ccc\intersection\cww(S))}$ form a
cofibrantly generated model structure. In particular for $S=\emptyset$,
the construction of Theorem~\ref{thm:3} gives a left determined
model structure.
\end{corollary}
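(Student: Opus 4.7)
The plan is to reduce this immediately to the preceding theorem by verifying its hypothesis is automatic when the cylinder is final, and then to unwind the definition of ``left determined'' when $S=\emptyset$.

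First I would observe that finality of the cylinder supplies the hypothesis of the preceding theorem for \emph{every} choice of $S$. Indeed, if $(\cyl,\gamma,\sigma)$ is final then by definition each component $\sigma_X$ lies in $\rbox\ccc$; but condition (iii) of Definition~\ref{def:basic_loc} forces $\rbox\ccc\subseteq\cww(S)$ for every localizer. So the components of $\sigma$ automatically lie in $\cww(S)$. Applying the preceding theorem then delivers the cofibrantly generated model structure $(\ccc,\cww(S),\rbox{(\ccc\intersection\cww(S))})$ together with the identification $\cww(S)=\cww_{S,\ccc}$.

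For the left-determined claim, specialize to $S=\emptyset$. By Definition~\ref{def:left_determined}, $\cww_{\emptyset,\ccc}=\cww_\ccc$ is the smallest class of maps satisfying (i)--(iv), so the first part yields $\cww(\emptyset)=\cww_\ccc$. To see that the resulting model structure is the one produced by Theorem~\ref{thm:3}, recall the intermediate lemma established $\cww(\cyl,S,I)=\cww(\Lambda(\cyl,S,I))$, and the preceding theorem further identified this with $\cww(S)$ under our hypotheses; so the two constructions agree on the class of weak equivalences, and hence on fibrations too. Thus the class of weak equivalences of Theorem~\ref{thm:3} coincides with $\cww_\ccc$, which is the definition of a left determined model structure.

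There is no real obstacle here: the corollary is essentially an unpacking exercise once the preceding theorem is available, and the only point worth making explicit is that finality of $\sigma$ is a hypothesis about $\ccc$ alone and so transfers to every localizer. The slight bookkeeping is the compatibility between the construction of Theorem~\ref{thm:3} (phrased via $\cww(\cyl,S,I)$) and the statement here (phrased via $\cww(S)$), which is exactly what the previous lemma and theorem were set up to establish.
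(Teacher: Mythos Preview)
Your proposal is correct and matches the paper's own approach: the paper states the corollary without a formal proof, having already noted in the paragraph immediately preceding it that finality of the cylinder forces each $\sigma_X\in\rbox\ccc\subseteq\cww(S)$, so the hypothesis of the preceding theorem is automatic. Your additional remark tracing the identification $\cww(\cyl,\emptyset,I)=\cww(\emptyset)=\cww_\ccc$ through the intermediate lemma and theorem is exactly the intended bookkeeping.
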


\begin{remark}
\label{rem:cylfunc}
The above result also shows, that the construction of the model structure
from $(\ccc,\rbox{\ccc})$ and $S$ does not depend on the choice of the
cylinder used. For example, if the underlying category is distributive
and if the class $\ccc$ is stable under pullbacks along product projections,
then any factorization of the codiagonal $\map{(1|1)}{2={1+1}}1$ as a
composition of some $\map g2V$ and $\map sV1$ with $g\in\ccc$ and
$s\in\rbox\ccc$ will provide a final cylinder with $\cyl=(-)\times V$,
$\gamma=(-)\times g$ and $\sigma=(-)\times s$.
If $V$ is exponentiable then $\cyl$ is a left adjoint.
\end{remark}

\begin{example}
\label{ex:topos}
Let $\map\top1\Omega$ be the subobject classifier of a Grothendieck
topos $\cee$ and let $\map\bot1\Omega$ be the characteristic map of $0\to1$,
which means that $\bot$ is the uniquely determined map in the
pullback:
$$
\xymatrix{
0 \ar[d] \ar[r] & 1 \ar[d]^\top \\
1 \ar[r]_\bot & \Omega
}
$$
Then the induced map $\map{(\bot|\top)}{1+1}\Omega$ is a monomorphism
(this is just another instance of Diagram~\eqref{dia:eff}).
Since $\Omega$ is injective, this gives a $(\Mono,\rbox\Mono)$-factorization
of the codiagonal $\map{(1|1)}{1+1}1$.
\newline
Therefore $(-)\times\Omega$ gives a final cylinder and the natural map
$\gamma$ is given as $(-)\times(\bot,\top)$.

Because $\cee$ is cartesian closed, $(-)\times\Omega$ is a left adjoint
and it clearly preserves monomorphisms.
By Corollary~\ref{cor:adm_equal_dh1}, the resulting cylinder is cartesian.
\end{example}

\section{Examples}

\nobreak\noindent
In this section, we will examine examples, where the underlying categories
are locally presentable, but not toposes. However, except for the
last one, they are still cartesian closed and cylinders can be obtained
from suitable factorizations of the codiagonals $2\to1$
as indicated in Remark~\ref{rem:cylfunc}.

Moreover, the homotopy relation is already determined by $\cyl(1)$
in the sense that two maps $\map{f,g}XY$ are homotopic if and only if
their exponential adjoints
$\map{\ulcorner{f}\urcorner,\ulcorner{g}\urcorner}1{Y^X}$
are homotopic. This latter condition often has a direct description in terms
of the structure of $Y^X$, so that it is sufficient to know when two
elements $\map{x,y}1X$ are homotopic.

The first example also provides an instance of the second line of
generalization, in that the class of cofibrations is not the class
of monomorphisms.

\begin{example}
\label{ex:cat}
Consider $\ckk=\Cat$, the category of small categories and functors.
It has a model structure, the so called ''folk model structure'',
where the cofibrations are those functors that are injective on
objects, and the weak equivalences are the usual categorical equivalences.
This model structure has been known for some time (hence the name),
the first published source seems to be Joyal and Tierney~\cite{JOTI90}.
It has also been later reproved and described in detail by
Rezk~\cite{REZK96}. We will show that this model structure
is left determined by rebuilding it from a generating set of
cofibrations and a final cartesian cylinder.

Recall that for any set $S$ one has the discrete category on
its elements (written also as $S$) and the indiscrete category
(i.e. the connected groupoid with trivial object groups) on its elements,
which we will write as $\overline{\underline S}$.
These two constructions give functors in the obvious way to provide
left and right adjoints for the underlying object functor $\map \Ob\Cat\Set$.
In particular we write $2$ and $\overline{\underline2}$ for the discrete
and the indiscrete category on two objects.
Moreover, we write $\underline2$ for the linearly ordered set $\{0,1\}$
and $P$ for the ''parallel pair'', i.e. the pushout of
the inclusion $2\inclusion\underline2$ with itself.

Consider
$I=\{(0\inclusion1), (2\inclusion\underline2), \map pP{\underline2}\}$,
where the last functor maps both nontrivial arrows of $P$ to the
nontrivial arrow of $\underline2$.
\begin{numlist}

\item
We first check that $I$ is a set of generating cofibrations.
Clearly $\rbox I$ consists of all those functors, which are full,
faithful and surjective on objects. Moreover, for any map $f$ one has
\newline
$f\in\lbox{(\rbox I)} \iff \Ob(f) \text{ is a monomorphism}$.

\noindent
For the direction ''$\Rightarrow$'', observe that the functor
$(\overline{\underline2}\to1)$ is in $\rbox I$ and that
$f\boxrel (\overline{\underline2}\to1)$ forces
$\Ob(f) \boxrel (2\to1)$ in $\Set$.

\noindent
Conversely, consider a square
$$
\xymatrix{
A \ar[d]_i \ar[r]^f & X \ar[d]^p \\
B \ar[r]_g & Y
}
$$
where $p\in\rbox{I}$ and $i$ is injective on objects.
Define $\map hBX$ on objects by $h(i(a)) = f(a)$ and
$h(b)\in p^{-1}(g(b))$ for $b\notin i(A)$. This can be done
because $\Ob(i)$ is injective and $\Ob(p)$ is surjective.
For a morphism $\map ub{b'}$ in $B$, define $\map {h(u)}{h(b)}{h(b')}$
to be the unique element of $X(h(b),h(b'))\intersection p^{-1}(g(u))$.
This works because $p$ is full and faithful.
Then $h$ is the desired diagonal.

\item
The cylinder functor $\cyl=(-)\times\overline{\underline2}$
is obtained from the factorization $2\inclusion\overline{\underline2}\to1$
and $\map{\gamma_X}{X\times 2}{X\times\overline{\underline2}}$
is the usual inclusion. Because $(\overline{\underline2}\to1)$ is in
$\rbox I$, the resulting cylinder is final. Two objects $\map{x,y}1X$
of a category $X$ are homotopic iff they are isomorphic.
Therefore two functors $\map{f,g}XY$ are homotopic iff they
are naturally isomorphic.

\item
It remains to check condition (b) of Definition~\ref{def:cart}, i.e.~stability
of $I$ under $(-)\star\gamma$ and $(-)\star\gamma^k$.

For the case of $\gamma$, consider a diagram
$$
\xymatrix{
X+X \ar[r]^-{\gamma_X} \ar[d]_{f+f} & {\cyl X} \ar[d] \ar@/^/[dr]^{\cyl f} & \\
Y+Y \ar@/_10pt/[rr]_{\gamma_Y} \ar[r]^-q & Q \ar[r]^{f\star\gamma} & {\cyl Y}
}
$$
where $Q$ is a pushout of $f+f$ and $\gamma_X$.
The maps $\Ob(\gamma_X)$ and $\Ob(\gamma_Y)$ are bijective.
Because the functor $\Ob$ preserves pushouts, the map $\Ob(q)$ is also
bijective and hence $\Ob(f\star\gamma)$ is bijective.

For the case of $\gamma^0$ and $\gamma^1$ one can calculate directly that
the following two diagrams
$$
\xymatrix{
2 \ar[r]^-{\gamma^k_2} \ar[d]_{\gamma_1}
 & 2\times\overline{\underline2}
   \ar[d]^{{\gamma_1}\times\overline{\underline2}} \\
{\underline2} \ar[r]_-{\gamma^k_{\underline2}}
 & {\underline2}\times\overline{\underline2}
}
\qquad
\xymatrix{
P \ar[r]^-{\gamma^k_P} \ar[d]_p
 & P\times\overline{\underline2} \ar[d]^{p\times\overline{\underline2}} \\
{\underline2} \ar[r]_-{\gamma^k_{\underline2}}
 & {\underline2}\times\overline{\underline2}
}
$$
are pushout squares and hence $(2\inclusion\underline2)\star\gamma^k$ and
$p\star\gamma^i$ are isomorphisms.
Moreover, $(0\to1)\star\gamma^k=\gamma^k_1$.

\item
Now for the computation of $\Lambda(\emptyset,I)$.
By (3) above, $\Lambda^0(\emptyset,I)$ consists
of isomorphisms and the two inclusions
$\map {\gamma^0_1,\gamma^1_1}1{\overline{\underline2}}$.
A direct computation gives that
$$
\xymatrix{
1+1 \ar[r]^{\gamma_1} \ar[d]_{\gamma^k_1+\gamma^k_1}
 & 1\times\overline{\underline2}
   \ar[d]^{\gamma^k_1\times\overline{\underline2}} \\
{\overline{\underline2}}+{\overline{\underline2}}
 \ar[r]_-{\gamma_{\overline{\underline2}}}
 & {\overline{\underline2}}\times{\overline{\underline2}}
}
$$
is a pushout square and hence $\gamma^k_1\star\gamma$ is an isomorphism.
Therefore
$\rbox{\Lambda(\emptyset,I)}=\rbox{\Lambda^0(\emptyset,I)}
=\rbox{\{\gamma_1^0,\gamma_1^1\}}$
and every object of $\Cat$ is fibrant.

\item
From Corollary~\ref{cor:heq} we obtain that $\cww=\cww(\emptyset,I)$
consists of the categorical equivalences, which completes the
construction.
\end{numlist}
\end{example}

\noindent
The following Lemma gives a method to build new examples from
old ones by inducing Cisinski's construction on certain
reflective subcategories.

\begin{lemma}
\label{lem:induced}
Let $\ckk$ be a locally presentable category with a cofibrant weak
factorization system generated by a set $I$, a cylinder
$(\cyl,\gamma,\sigma)$ and a reflection $\map R\ckk\caa$  onto
a full subcategory $\caa$ which is also locally presentable.
Then the restriction of $\map{R\cyl}\ckk\caa$ to $\caa$ provides
a cylinder $(R\cyl,R\gamma,R\sigma)$ for the cofibrant weak
factorization system generated by $RI$ in $\caa$.
Moreover, the following holds:
\begin{alphlist}
\item
The two cylinders $(\cyl,\gamma,\sigma)$ and $(R\cyl,R\gamma,R\sigma)$
determine the same homotopy relation on $\caa$.

\item
For any $S\subseteq \lbox{(\rbox I)}$ one has
$\Lambda(R\cyl,RS,RI) = R\Lambda(\cyl,S,I)$.
Consequently $\Lambda(R\cyl,RS,RI)$ and $\Lambda(\cyl,S,I)$ determine
the same class of fibrant objects in $\caa$.

\item
Suppose that $(\cyl,\gamma,\sigma)$ is cartesian and that the
right adjoint of $\cyl$ leaves $\caa$ invariant.
Then the cylinder $(R\cyl,R\gamma,R\sigma)$ is also cartesian.

\item
Given $S\subseteq \lbox{(\rbox I)}$, if in the situation of (c)
every object of $\caa$ is fibrant w.r.t.~$\Lambda(\cyl,S,I)$
then $\cww(R\cyl,RS,RI) = \caa \intersection \cww(\cyl,S,I)$.
\end{alphlist}
\end{lemma}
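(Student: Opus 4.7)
The plan is to exploit the adjunction $R \adjoint J$, where $J\colon \caa \inclusion \ckk$ is the full inclusion with unit $\eta\colon 1_\ckk \to JR$, together with the fact that $R$ preserves all colimits and that the counit $RJ \to 1_\caa$ is an isomorphism. The tool that ties everything together is the following transport observation: for any class $M$ of maps in $\ckk$ and any map $g$ of $\caa$, one has $g \in \rbox{RM}$ in $\caa$ iff $Jg \in \rbox M$ in $\ckk$ (by translating lifting problems across $R \adjoint J$), and consequently $R$ sends $\lbox{(\rbox M)}$ into $\lbox{(\rbox{RM})}$. This already yields the preliminary cylinder claim: $R\gamma_X$ followed by $R\sigma_X$ factors the codiagonal $R(X|X)_\ckk = (X|X)_\caa$ through $R\cyl X$, and $R\gamma_X \in \lbox{(\rbox{RI})}$ because $\gamma_X \in \lbox{(\rbox I)}$.

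For part (a) I would use the adjunction bijection $\tilde h \leftrightarrow \eta_{\cyl X}\tilde h$ between $\caa(R\cyl X,Y)$ and $\ckk(\cyl X,Y)$ for $Y \in \caa$. Naturality of $\eta$ at $\gamma_X$ together with the identity $\eta_{X+X}(f|g)_\caa = (f|g)_\ckk$ (the same bijection applied to the codiagonal) reduce the condition $R\gamma_X \tilde h = (f|g)_\caa$ to $\gamma_X \eta_{\cyl X}\tilde h = (f|g)_\ckk$, matching homotopies on the two sides. Part (b) is by induction on $n$: since $R$ preserves the pushouts defining $\star$, one has $R(f\star\gamma) = Rf\star R\gamma$ and likewise for $\gamma^k$, whence $\Lambda^n(R\cyl,RS,RI) = R\Lambda^n(\cyl,S,I)$. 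The consequent statement on fibrant objects is the same transport observation applied at the terminal object, using $J1_\caa = 1_\ckk$ because $J$ is a right adjoint.

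Part (c) has two pieces. For the left-adjointness of the restricted functor $R\cyl\colon \caa \to \caa$, I chain the natural bijections
\[
\caa(R\cyl JX, Y) \iso \ckk(\cyl X, Y) \iso \ckk(X, \cyl^* Y) = \caa(X, \cyl^*|_\caa Y),
\]
using $R \adjoint J$, $\cyl \adjoint \cyl^*$, and the hypothesis that $\cyl^*$ preserves $\caa$; this exhibits $\cyl^*|_\caa$ as the sought right adjoint. For the star-closure condition, part (b) combined with the transport observation gives $RI\star R\gamma = R(I\star\gamma) \subseteq \lbox{(\rbox{RI})}$, and likewise for $R\gamma^k$; Corollary~\ref{cor:starstable} then promotes $RI$ to all of $\lbox{(\rbox{RI})}$.

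Part (d) is where one collects the harvest. By hypothesis every $X \in \caa$ is fibrant in $\ckk$ w.r.t.\ $\Lambda(\cyl,S,I)$, and by part (b) the same objects are fibrant in $\caa$ w.r.t.\ $\Lambda(R\cyl,RS,RI)$. For a map $f\colon X \to Y$ of $\caa$, Corollary~\ref{cor:heq} inside the $\ckk$-model structure provided by Theorem~\ref{thm:3} identifies $\caa \intersection \cww(\cyl,S,I)$ with the $\cyl$-homotopy equivalences between objects of $\caa$, while the same corollary inside the $\caa$-model structure (which exists by part (c) and Theorem~\ref{thm:3}) identifies $\cww(R\cyl,RS,RI)$ with the $R\cyl$-homotopy equivalences there; part (a) identifies the two classes. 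The main delicate point is the bookkeeping in part (a), where one must carefully distinguish $X +_\ckk X$ from $X +_\caa X = R(X+_\ckk X)$ and track how $\eta_{X+X}$ carries homotopies from one side to the other; once this is set up cleanly, everything else is essentially mechanical.
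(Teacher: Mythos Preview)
Your proof is correct and follows essentially the same route as the paper's. The paper's version is terser: it packages your transport observation as the single equivalence $Rf \boxrel g \iff f \boxrel g$ for $g$ in $\caa$, handles part~(a) by invoking that equivalence (really the underlying adjunction bijection, as you spell out with $\eta$), and for the star-closure in part~(c) simply says ``holds because of (b)'' where you correctly unpack this as $RI\star R\gamma = R(I\star\gamma) \subseteq \lbox{(\rbox{RI})}$ followed by Corollary~\ref{cor:starstable}.
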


\begin{proof}
First observe that by Part~(a) of Theorem~\ref{thm:lp_is_wfs} the set
$RI$ indeed generates a weak factorization system in $\caa$, which is
cofibrant because $\caa$ is full.
We will repeatedly use the equivalence
$$
Rf \boxrel g \iff f\boxrel g  \text{\qquad for all } f\in\ckk, g\in\caa
\eqno{(*)}
$$
which holds by adjointness between $R$ and the inclusion of $\caa$.
Given any object $A\in\caa$, its coproduct with itself in $\caa$ is
$R(A+A)$ and also $RA\iso A$.
Application of $R$ to diagram~\eqref{dia:cyl} in
Definition~\ref{def:htop} therefore shows that $RCA$
is indeed a cylinder object for $A$.
\begin{alphlist}
\item
Consider any two maps $\map{f,g}AB$ in $\caa$ and the induced map
$\map {(f|g)}{A+A}B$ from the coproduct in $\ckk$.
\newline
Then $\map{\widehat{(f|g)}}{R(A+A)}B$ is the induced map from
the coproduct in $\caa$.
Now $f \htop g \pmod{\cyl} \iff f \htop g \pmod{R\cyl}$ follows
with ($*$).

\item
Because $R$ preserves pushouts, we have
$
Rf\star R\gamma = R(f\star\gamma)
$
and
$
Rf\star R\gamma^k = R(f\star\gamma^k)
$
(for $k=0,1$),
which gives the equality $\Lambda(R\cyl,RS,RI) = R\Lambda(\cyl,S,I)$.
By ($*$) we have
\newline
$
R\Lambda(\cyl,S,I) \boxrel (A\to1)
\iff 
\Lambda(\cyl,S,I) \boxrel (A\to1)
$
and hence $\Lambda(\cyl,S,I)$ and $\Lambda(R\cyl,RS,RI)$ determine the
same class of fibrant objects.

\item
Let $\map G\ckk\ckk$ be a right adjoint of $\cyl$ with $G\caa\subseteq\caa$.
The isomorphisms (natural in $A,B\in\caa$)
$$\caa(R\cyl A,B) \iso \ckk(\cyl A,B) \iso \ckk(A,GB) \iso \caa(A,GB)$$
exhibit the cylinder functor as a left adjoint.
The second condition in Definition~\ref{def:cart} holds because of (b).

\item
By Corollary~\ref{cor:heq} and part (a) above, both $\cww(R\cyl,RS,RI)$ and
$\caa \intersection \cww(\cyl,S,I)$ coincide with the class of homotopy
equivalences in $\caa$.\qedhere
\end{alphlist}
\end{proof}

In the situation of the above Lemma, one cannot expect in general
that a final cylinder on $\ckk$ will induce a final cylinder on
the subcategory $\caa$. Therefore the induced model structure
may fail to be left determined even if the original one was.
Nevertheless, in the next three examples one can check directly
that the induced cylinders are final and hence the induced model
structures are left determined.

\begin{example}
\label{ex:prord}
Let $\ckk=\Cat$ and $\caa=\PrOrd$, the category of preordered sets
(i.e.~sets with a reflexive and transitive relation) and monotone maps.
$\PrOrd$ has a model structure where the cofibrations are the
monomorphisms and the weak equivalences are the categorical equivalences.
We will obtain it from the previous one on $\Cat$.

The reflection $\map R\Cat\PrOrd$ is bijective on objects and identifies
parallel arrows. We will keep the notation from Example~\ref{ex:cat}.
Discarding the isomorphism $Rp$ from $RI$, we obtain the generating set
$I'= RI\setminus\{Rp\} = \{(0\to1), (2\inclusion\underline2)\}$.
One has $\lbox{(\rbox{I'})}=\Mono$, which is obtained exactly as
in Example~\ref{ex:cat}, keeping in mind that functors between preorders
are always faithful and that the monomorphisms in $\PrOrd$ are exactly
the functors that are injective on objects.
The right adjoint to $(-)\times\overline{\underline2}$ is
$(-)^{\overline{\underline2}}$ which leaves $\PrOrd$ invariant.
Every object is fibrant and therefore $\cww'=\cww(\emptyset,I')$ consists
of the categorical equivalences.
\end{example}

\begin{example}
Let $\ckk=\PrOrd$ and $\caa=\Ord$, the category of ordered sets
(i.e.~sets with a reflexive, transitive and antisymmetric relation)
and monotone maps.
$\Ord$ has a model structure where the cofibrations are all maps
and the weak equivalences are the isomorphisms.
We will obtain it from the previous one on $\PrOrd$.

The reflection $\map R\PrOrd\Ord$ assigns to every preordered set $X$
the quotient $X/{\htop}$ obtained from identifying homotopic elements.
The generating set $I'=\{(0\to1), (2\inclusion\underline2)\}$ is already
contained in $\Ord$ and hence $I'=RI'$.
Because a full surjective functor between ordered sets must be an isomorphism,
the class $\rbox I$ consists of all isomorphisms and consequently
$\lbox{(\rbox I)}=\Ord$. For any ordered set $P$ one has
$P^{\overline{\underline2}}=P$. Therefore $\Ord$ is invariant under
$(-)^{\overline{\underline2}}$.
Every object is fibrant and therefore $\cww'=\cww(\emptyset,I')$ is
the class of isomorphisms.
\end{example}

\begin{example}
Let $\ckk=\PrOrd$ and $\caa=\Set$. Here we identify $\Set$ with
the full subcategory of indiscrete preordered sets.
It has a model structure where the cofibrations are the
monomorphisms and the weak equivalences are the maps between nonempty
sets together with the identity map of the empty set.
This (almost trivial) model structure is also mentioned in
\cite{CISI02}*{Exemple~3.7} and \cite{ROTH03}*{Section~3}.
It can be constructed with the cylinder in Example~\ref{ex:topos},
with the set of generating cofibrations given by
the proof in \cite{BEKE00}*{Proposition~1.12}.
Instead we will obtain it from the one on $\PrOrd$
in Example~\ref{ex:prord}.

The reflection $\map R\PrOrd\Set$ assigns to every preordered set
the indiscrete preorder on its elements.
Let $I'$ be as in Example~\ref{ex:prord}.
Discarding the identity map $\overline{\underline2}$ from $RI'$,
we obtain the generating set $I''= \{(0\to1)\}$ in $\Set$.
Then $\rbox{I''}$ is the class of surjective maps and
$\lbox{(\rbox{I''})}=\Mono$. 
For any indiscrete preorder $X$, the preorder $X^{\overline{\underline2}}$
is again indiscrete. Therefore $\Set$ is invariant under
$(-)^{\overline{\underline2}}$.
Every object is fibrant and therefore $\cww''=\cww(\emptyset,I'')$
consists of the identity map of the empty set and of all maps with nonempty
domain.
\end{example}

\noindent
In the previous examples, all objects were fibrant and consequently
the homotopy relation already determined the weak equivalences via
Corollary~\ref{cor:heq}. Here is an example where this does not happen.

\begin{example}
Let $\ckk=\rsRel$, the category of plain undirected graphs
(i.e. sets with a reflexive and symmetric relation together with
maps preserving such relations).
We will construct a left determined model structure on $\rsRel$ where
the cofibrations are the monomorphisms and the weak equivalences
are those maps that induce bijections between path components.
It can be seen as the one-dimensional version of the
left determined model structure on simplicial complexes
as described in \cite{ROTH03}*{Remark~3.7}.

We will write $n$ for the discrete graph on $n$ vertices,
$K_n$ for the indiscrete (i.e. complete) graph on $n$ vertices
and $K_n^-$ for the graph obtained from $K_n$ by deleting one edge.
\newline
Consider the set $I=\{(0\to1), (2\inclusion K_2)\}$, where
the second map is the usual inclusion.
\begin{numlist}

\item
We first check that $I$ is a set of generating cofibrations.
The class $\rbox I$ consists of those maps
$\map f{(X,\alpha)}{(Y,\beta)}$ that are
surjective and full (i.e. satisfy $f(x)\beta f(x')\follows x\alpha x'$).

Moreover one has $\lbox{(\rbox I)}=\Mono$.
This follows by the same argument as in the case of categories 
(step~(1) in Example~\ref{ex:cat}) with $K_2$ in place of
$\overline{\underline2}$.

\item
The cylinder functor $\cyl=(-)\times K_2$
is obtained from the factorization $2\inclusion K_2\to1$
and $\map{\gamma_X}{X\times 2}{X\times K_2}$
is the usual inclusion. Because $(K_2\to1)$ is in
$\rbox I$, the resulting cylinder is final.
Two vertices $\map{x,y}1X$ of a graph
are homotopic iff they are joined by an edge in $X$.
Therefore, for two maps $\map{f,g}{(X,\alpha)}{(Y,\beta)}$ one has
$$
f\htop g
\iff \forall x,x'\in X: \left( x\alpha x' \follows f(x)\beta g(x')\right)
$$
because $Y^X$ is $\rsRel(X,Y)$ equipped with the relation $\beta^\alpha$
defined by the condition on the right side of the above equivalence.
In particular the homotopy relation is not transitive in general.
The homotopy relation on $\rsRel(X,Y)$ is transitive whenever $Y$
(i.e. its relation) is transitive.
Moreover, if $Y$ is discrete then homotopy coincides with equality.

\item
For a partial description of $\Lambda=\Lambda(\emptyset,I)$
first observe, that the forgetful functor $\rsRel\to\Set$
preserves pushouts. In particular, in a pushout diagram
$$
\xymatrix{
A\times 2 \ar[r]^{\gamma_A} \ar[d]_{f\times2}
& A\times K_2 \ar[d] \\
B\times 2 \ar[r] & Q
}
$$
one can assume that the underlying set of $Q$ is $B\times2$,
that the horizontal underlying maps are identity maps and that
the two vertical underlying maps coincide.
Now suppose that $A$ is nonempty and $B$ is indiscrete.

\noindent
Then $Q$ is path connected: given any $b,b'\in B$ and $i,j\in2$,
take some $a\in A$ with $\xymatrix{b\edge[r]& f(a)\edge[r]& b'}$.
Then
\begin{romlist}
\item
\quad$\xymatrix@1{(b,i) \edge[r] & (f(a),i)}$ in $B\times2$

\item
\quad$\xymatrix@1{(a,i) \edge[r] & (a,j)}$ in $A\times K_2$

\item
\quad$\xymatrix@1{(f(a),j) \edge[r] & (b',j)}$ in $B\times2$
\end{romlist}
and passing to $Q$ gives a path
$$
\xymatrix{(b,i) \edge[r]& (f(a),i) \edge[r]& (f(a),j) \edge[r]& (b',j)}
$$
in $Q$. Hence, if $\map fAB$ is an inclusion then 
$f\star\gamma$ is the inclusion of the (nonempty)
path connected $Q$ into the indiscrete $B\times K_2$.

\noindent
As in Example~\ref{ex:cat} we have
$(0\to1)\star\gamma^k=\map{\gamma^k_1}1{K_2}$.
From the inclusion $\map{\gamma_1}2{K_2}$ we obtain the following diagram
$$
\xymatrix{
2 \ar[r]^-{\gamma^0_2} \ar[d]_{\gamma_1}
 & 2\times K_2 \ar[d] \ar@/^/[ddr]^{\gamma_1\times K_2} & \\
K_2 \ar[r] \ar@/_/[drr]_{\gamma^0_{K_2}} &
K_4^- \ar[dr]|-{\gamma_1\star\gamma^0} & \\
 & & K_2\times K_2
}
$$
where (according to the notation introduced) $K_4^-$ is the graph
$$
\xymatrix{
{\bullet} \edge[d] \edge[dr] & {\bullet} \edge[d] \\
{\bullet} \edge[r] \edge[ur] & {\bullet} 
}
$$
and $\gamma_1\star\gamma^0$ is the inclusion of $K_4^-$
into $K_4=K_2\times K_2$.
Up to a permutation of vertices, the same inclusion is obtained
as $\gamma_1\star\gamma^1$.

Hence each map in $\Lambda^0$ is the inclusion of a nonempty
path connected subgraph of some suitable $K_n$
Applying the above observation gives (via induction) that each $\Lambda^n$
consists only of maps of this type. Except for the two inclusions
$\gamma^0_1$ and $\gamma^1_1$, the included subgraph of $K_n$ is wide,
i.e. it has the maximal number of vertices.

Consequently, every transitive graph $T$ is fibrant:
given some inclusion $P\inclusion K_n$ with $P$ path connected
and $|P|=n$, any map $\map fPT$ can be extended to $\map h{K_n}T$
by $h(x):=f(x)$.

Conversely, assume that $X$ is fibrant.
Observe that ${K_3^-\inclusion K_3}$ is in $\lbox{(\rbox\Lambda)}$
because it can be obtained from ${K_4^-\inclusion K_4}$ as a pushout
$$
\xymatrix{
K_4^- \ar@{_{(}->}[d] \ar[r]^p & K_3^- \ar@{_{(}->}[d]\\ %))
K_4 \ar[r] & K_3
}
$$
where $p$ is the surjection that collapses the two vertices of degree $3$.
Therefore, every map $\map f{K_3^-}X$ can be extended to a map
$\map {f'}{K_3}X$, which is precisely the definition of transitivity.

In summary, the fibrant graphs are exactly the transitive graphs.

\item
For a graph $(X,\alpha)$, a path component is an equivalence
class of the transitive closure $\alpha^*$ of the relation $\alpha$.
We write $[x]$ for the equivalence class of any $x\in X$ and
$\pi_0X$ for the discrete graph on the set $\{[x] \mid x \in X\}$.
Setting $\pi_0f([x]):=[f(x)]$ for any $\map fXY$ makes
$\pi_0$ into a functor and the canonical map
$\map {r_X}X{\pi_0X}$ with $r(x)=[x]$ gives a reflection into
the subcategory of discrete graphs. 
For two maps $\map{f,g}{(X,\alpha)}{(Y,\beta)}$ one has:
$$
\pi_0f=\pi_0g
\iff \forall x,x'\in X: \left( x\alpha^* x' \follows f(x)\beta^* g(x') \right)
$$
Comparing this with the homotopy condition
$$
f\htop g
\iff \forall x,x'\in X: \left( x\alpha x' \follows f(x)\beta g(x') \right)
$$
one obtains that always $f\htop g \follows \pi_0f=\pi_0g$ and that
the converse implication $\pi_0f=\pi_0g \follows f\htop g$ holds
whenever $\beta$ is already transitive.
In the general case of a map $\map fXY$ one has:
$$
f \in \cww \iff \pi_0f \text{ is an isomorphism }
$$
For the direction ''$\Rightarrow$'' assume $f\in \cww$.
Remark~\ref{rem:heq} with $t=r_X$ and $T=\pi_0X$
gives a map $\map uX{\pi_0X}$ such that in the diagram
$$
\xymatrix{
X \ar[d]_{r_X} \ar[r]^f & Y \ar[d]^{r_Y} \ar[ld]^u  \\
\pi_0X \ar[r]_{\pi_0f} & \pi_0Y
}
$$
we have $r_X\htop fu$. Then also $fr_Y=r_X(\pi_0f)\htop fu(\pi_0f)$
and by Remark~\ref{rem:heq} with $t=r_X(\pi_0f)$ and $T=\pi_0Y$
this forces $r_Y\htop u(\pi_0f)$. But for discrete codomains,
homotopy means equality and hence the above diagram strictly commutes.
Applying the functor $\pi_0$ to that diagram exhibits $\pi_0u$ as the
two-sided inverse of $\pi_0f$.

For the direction ''$\Leftarrow$'' assume that $\pi_0f$ is an isomorphism
and let $\map tXT$ be a map to a transitive graph $T$.
Uniqueness up to homotopy follows from the equivalence
\begin{align*}
fh\htop fh'
&\iff 
(\pi_0f)(\pi_0h) = (\pi_0f)(\pi_0h') \\
&\iff 
\pi_0h = \pi_0h'
\iff 
h\htop h'
\end{align*}
for any $\map {h,h'}YT$ because $T$ is transitive.

For existence, let $\map s{\pi_0T}T$ be a section of $r_T$
with $\pi_0s=\pi_0T$ (i.e. a choice of representatives of the path 
components) and define $\map hYT$ as the composite
$h= r_Y (\pi_0f)^{-1}(\pi_0t)s$.
Then $\pi_0(fh)=\pi_0t$ and hence $fh\htop t$.
\end{numlist}
\end{example}

\begin{example}
\label{ex:eqr}
Keep the notation of the previous example and consider
the full reflective subcategory $\eqRel$ of transitive graphs,
i.e.~sets equipped with an equivalence relation.
It has a model structure where the cofibrations are
the monomorphisms and the weak equivalences are those maps that
induce bijections between equivalence classes. This model structure has
been described in detail by L\'arusson~\cite{LARU06}.
We will obtain it via Lemma~\ref{lem:induced} from the previous
one on $\rsRel$.

The reflection $\map R{\rsRel}{\eqRel}$ assigns to every graph $(X,\alpha)$
its transitive closure $(X,\alpha^*)$. Because the graphs $0$, $1$, $2$
and $K_2$ are already transitive, one obtains $RI=I$ and
also $\lbox{(\rbox{RI})}=\Mono\intersection\eqRel$ as in step (1) above.
Moreover, if $X$ is transitive then so is $X^{K_2}$ and we already
noted in step (3) that all transitive graphs are fibrant.
From Lemma~\ref{lem:induced} we now obtain that $\cww'=\cww(\emptyset,I)$
consists of those maps $f$ where $\pi_0f$ is an isomorphism, i.e.~those
maps that induce a bijection between equivalence classes.
Finally observe, that $R$ preserves full surjections.
Therefore the induced cylinder is again final and the induced model
structure is left determined.
\end{example}

We now turn from ''space-like'' to ''linear'' examples.
Let $R$ be a ring and let $\ckk={}_R\Mod$, the category of left
$R$-modules. We also write $\Mod_R$ and ${}_R\Mod_R$ for the categories
of right and two-sided $R$-modules respectively.
We always have a cofibrant weak factorization system $(\Mono,\rbox\Mono)$
in $\ckk$, which is generated by the set $I$ of all inclusions
$\ideal{a}\inclusion R$ of left ideals. Also $\rbox\Mono$ consists
of all those epimorphisms with injective kernel
(for details see \cite{AHRT02b}*{Example~1.8(i)}).

We will only be concerned with model structures constructed from
the above weak factorization system, i.e. where $\Mono$ is the
class of cofibrations. Hence it remains to find cartesian cylinders.

In order to find possible examples, we first characterize cartesian
cylinders for the weak factorization system $(\Mono,\rbox\Mono)$ in $\ckk$.
Recall that a map $\map fUV$ of right modules is $\df{pure}$
(or equivalently that $f(U)$ is a pure submodule of $V$) if for every
(finitely generated) left module $M$, the map
$\map{f\tensor_R M}{U\tensor_R M}{V\tensor_R M}$ is a monomorphism.
We use another characterization of pure submodules:
$U\subseteq V$ is pure iff every finite system of equations
$$
u_j=\sum_i x_i r_{ij} \quad(u_j\in U,\, r_{ij}\in R)
$$
which has a solution with $x_i\in V$ also has a solution with $x_i\in U$.
For a direct proof, which can easily be adapted to the non-commutative
setting, see e.g.~Matsumura~\cite{MATS89}*{Theorem~7.13}.

\begin{proposition}
\label{prop:modules}
Suppose $V$ is a two-sided $R$-module together with a map $\map vRV$
in ${}_R\Mod_R$ and let $\map{\cyl_v}\ckk\ckk$ be the functor with
$\cyl_v(M)=(R+V)\tensor_R M = M+V\tensor_RM$.
Let $\map{\gamma^0_R}R{R+V}$ be the coproduct injection,
$\map{\sigma_R}{R+V}R$ be the product projection
and $\map{\gamma^1_R=(R,v)}R{R+V}$.
Set $\sigma=\sigma_R\tensor_R(-)$ and
$\gamma=(\gamma^0_R|\gamma^1_R)\tensor_R(-)$.
Then the following holds:

\begin{alphlist}

\item
$(\cyl_v,\gamma,\sigma) \text{ is a cylinder}
\iff
\map vRV \text{ is pure (in $\Mod_R$)}$.
Moreover, two maps $\map {f,g}MN$ are homotopic iff
\newline
$\map{g-f}MN$ factors through $\map {v\tensor_RM}M{V\tensor_RM}$.

\item
Every left adjoint cylinder $(\cyl,\gamma,\sigma)$ arises in this
way from a suitable pure monomorphism $\map vR{\ker(\sigma_R)}$.

\item
In the situation of (a) we have
\newline
$(\cyl_v,\gamma,\sigma) \text{ is cartesian}
\iff
V \text{ is a flat right module}$.

\item
In the situation of (a) we have
\newline
$(\cyl_v,\gamma,\sigma) \text{ is final}
\iff
V\tensor_RM \text{ is injective for every } M$.
\end{alphlist}
\end{proposition}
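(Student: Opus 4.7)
All four parts rest on the direct-sum decomposition $\cyl_v(M) = M \oplus (V \tensor_R M)$ coming from $R+V = R \oplus V$ and on viewing $\cyl_v$ as tensoring with the $(R,R)$-bimodule $R+V$. In each part I would unwind the defining diagrams explicitly.

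\textbf{Parts (a) and (b).} For (a), a short calculation verifies $\gamma^k_R\sigma_R = 1_R$, so $(\cyl_v,\gamma,\sigma)$ is always a factorization of the codiagonal; it is therefore a cylinder exactly when every $\gamma_M$ is a monomorphism. The explicit formula $\gamma_M(m,n) = (m+n,\,(v\tensor_R M)(n))$ makes $\gamma_M$ injective iff $v\tensor_R M$ is, and requiring this for all $M$ is the definition of purity of $v$. For the homotopy statement, a map $h: \cyl_v(M) \to N$ decomposes as $(h_1, h_2)$ on the two summands, and $\gamma_M h = (f|g)$ pins down $h_1 = f$ and rewrites as $g-f = (v\tensor_R M)\,h_2$, so such an $h$ exists iff $g-f$ factors through $v\tensor_R M$. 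For (b), every cocontinuous endofunctor of ${}_R\Mod$ is (up to natural isomorphism) of the form $T\tensor_R(-)$ for an $(R,R)$-bimodule $T$; set $T := \cyl(R)$ with right $R$-action obtained by applying $\cyl$ to the right-multiplication endomorphisms of $R$. The bimodule section $\gamma^0_R$ of $\sigma_R$ splits $T \iso R \oplus V$ with $V := \ker\sigma_R$, and $\gamma^1_R$ then has the form $(1_R, v)$ for a bimodule map $v: R \to V$; purity of $v$ follows from (a).

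\textbf{Parts (c) and (d).} For (c), the left-adjoint condition of Definition~\ref{def:cart} holds automatically, since $\cyl_v = (R+V)\tensor_R(-)$ has right adjoint $\Hom_R(R+V, (-))$. By the remark after Definition~\ref{def:cart}, the remaining condition reduces to testing $\star\gamma$, $\star\gamma^0$, $\star\gamma^1$ against the generating set $I$ of left-ideal inclusions. A direct pushout computation gives $f\star\gamma^0 \iso 1_N \oplus (V\tensor_R f)$, which is monomorphic iff $V\tensor_R f$ is; ranging $f$ over $\Mono$ this is precisely flatness of $V$ as a right $R$-module. The bimodule automorphism $(r, u) \mapsto (r, u - v(r))$ of $R+V$ carries $\gamma^1_R$ to $\gamma^0_R$ and is natural in $M$, so the $\gamma^1$-test reduces to the $\gamma^0$-test. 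For $\gamma$, I would invoke the short exact sequence of bimodules $0 \to R+R \to R+V \to V/v(R) \to 0$ together with the standard pushout-product lemma in abelian categories: if a monomorphism $i$ has flat cokernel, then its pushout-product with any monomorphism is again a monomorphism. Under purity of $v$, flatness of $V/v(R)$ is equivalent to flatness of $V$, completing the $\Leftarrow$ direction; the $\Rightarrow$ direction is already covered by $\gamma^0$. For (d), $\sigma_M$ is the projection $M \oplus (V\tensor_R M) \to M$ split by $\gamma^0_M$, and a split epimorphism $X \oplus K \to X$ possesses a diagonal against every monomorphism iff its kernel $K$ is injective (the diagonal amounts to extending a map into $K$ along a monomorphism). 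Hence all $\sigma_M$ lie in $\rbox\Mono$ iff every $V\tensor_R M$ is injective.

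\textbf{Main obstacle.} The fiddliest step is the $\gamma$-condition of (c), which needs both the pushout-product lemma and the equivalence, under purity of $v$, of flatness of $V$ and flatness of $V/v(R)$; keeping straight which direction uses which hypothesis is where care is required. Once the bimodule tensor viewpoint is in place, the rest is essentially mechanical.
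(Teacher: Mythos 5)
Your proposal is correct, and for parts (a), (b), (d) and for the $\gamma^0$-/$\gamma^1$-tests in (c) it follows essentially the same route as the paper: the splitting $\cyl_v(M)=M+V\tensor_R M$, the identification $\cyl\iso\cyl(R)\tensor_R(-)$ with right $R$-action coming from the right translations of $R$ (the paper gets this directly from the adjunction isomorphisms rather than quoting Eilenberg--Watts, but it is the same mechanism), the computation $f\star\gamma^0\iso 1_N\oplus(V\tensor_R f)$, the natural automorphism of $R+V$ carrying $\gamma^1_R$ to $\gamma^0_R$, and $\ker(\sigma_M)=V\tensor_R M$ combined with the description of $\rbox\Mono$ as epimorphisms with injective kernel. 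The genuine divergence is the $\gamma$-test in (c). The paper reduces to the generating cofibrations $\ideal{a}\inclusion R$ and shows $\ker(h)\subseteq\image(k)$ by an explicit element computation using the equational characterization of purity. You instead use the exact sequence of bimodules $0\to R+R\xrightarrow{\;\gamma_R\;}R+V\to V/vR\to 0$, observe that purity of $v$ together with flatness of $V$ forces $\mathrm{Tor}_1^R(V/vR,-)=0$ (long exact sequence), and invoke the lemma that a monomorphism of right modules with flat cokernel has monomorphic pushout-product, in the tensor sense, with every monomorphism of left modules. That lemma is true and your bookkeeping of which hypothesis enters where (purity for the flatness transfer, flatness for the lemma) is accurate; but since it carries the entire weight of this step you should either cite it or include its short proof: with $C$ flat, the maps $C\tensor_R f$, $A\tensor_R N\to B\tensor_R N$ and $A\tensor_R M\to B\tensor_R M$ are all monomorphisms, and a diagram chase then shows any element of the pushout killed by the corner map already lies in the pushout relations. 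What your route buys is a uniform, element-free argument valid for arbitrary monomorphisms $f$, with no reduction to ideal inclusions; what the paper's route buys is self-containedness, using nothing beyond the stated equational form of purity (at the price of the explicit cokernel presentation of the pushout).
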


\begin{proof}
We use familiar matrix notation for maps between (co)products
and omit the object names for identities and zero maps.
Then the maps introduced above can be written as
$\gamma^0_R={(\begin{smallmatrix} 1&0\end{smallmatrix})}$,
$\gamma^1_R={(\begin{smallmatrix} 1&v\end{smallmatrix})}$,
$\gamma_R={\bigl(\begin{smallmatrix} 1&0\\1&v\end{smallmatrix}\bigr)}$
and
$\sigma_R={\bigl(\begin{smallmatrix}1\\0\end{smallmatrix}\bigr)}$.
Abbreviating  $v\tensor_RM$ as $v_M$ and $V \tensor f$ as $f_V$,
we can also write
$\gamma_M={\bigl(\begin{smallmatrix} 1&0\\1&v_M\end{smallmatrix}\bigr)}$
and
$\cyl(f) = {\bigl(\begin{smallmatrix} f&0\\0&f_V\end{smallmatrix}\bigr)}$.

\begin{alphlist}

\item
Because of
$
\bigl(\begin{smallmatrix} 1&0\\1&v_M\end{smallmatrix}\bigr)
\bigl(\begin{smallmatrix}1\\0\end{smallmatrix}\bigr)
=
\bigl(\begin{smallmatrix}1\\1\end{smallmatrix}\bigr)
$
the maps  $\gamma_M$ and $\sigma_M$ clearly factor the codiagonal.
Moreover, $\gamma_M$ is a monomorphism iff $v_M$ is a monomorphism,
from which the equivalence follows.

Given two maps $\map{f,g}MN$, the map
$\map{\bigl(\begin{smallmatrix}f\\ g\end{smallmatrix}\bigr)}{M+M}N$
can be extended along $\map{\gamma_M}{M+M}{M+V\tensor_R M}$ iff the
\newline
equation
$$
\begin{pmatrix} 1 & 0\\ 1 & v_M \end{pmatrix}
\begin{pmatrix} h_1\\ h_2 \end{pmatrix}
=\begin{pmatrix} f\\ g \end{pmatrix}
$$
can be solved with some $\map{h_1}MN$ and $\map{h_2}{V\tensor_R M}N$.
This is equivalent to the condition that $\map{g-f}MN$
extends along $\map {v_M}M{V\tensor_R M}$.

\item
Let $(\cyl,\gamma,\sigma)$ be a cylinder such that $\cyl$ has a right
adjoint $G$.

Application of $\cyl$ to the right translations $\map {\rho_r}RR$ for
each $r\in R$ gives a right action of $R$ on $\cyl(R)$ which makes
$\cyl(R)$ into a two-sided module such that the isomorphisms
$$
\ckk(\cyl(R),M) \iso \ckk(R,G(M)) \iso G(M)
$$
are isomorphisms of left modules and hence
$\cyl \iso \cyl(R)\tensor_R(-)$.
Moreover, the diagrams
$$
\xymatrix{
R \ar[r]^-{\gamma^k_R} \ar[d]_{\rho_r}
& {\cyl(R)} \ar[r]^-{\sigma_R} \ar[d]^{\cyl(\rho_r)} & R \ar[d]^{\rho_r} \\
R \ar[r]^-{\gamma^k_R} & {\cyl(R)} \ar[r]^-{\sigma_R} & R 
}
$$
show that $\sigma_R$ and the $\gamma^k_R$ are maps of two-sided modules.
Consequently, $\cyl(R) = \gamma^0_R(R) + \ker(\sigma_R)$
is a decomposition as two-sided modules.
With respect to this decomposition, we obtain

\noindent
$\gamma^0_R={(\begin{smallmatrix} 1&0\end{smallmatrix})}$,
and
$\sigma_R={\bigl(\begin{smallmatrix}1\\0\end{smallmatrix}\bigr)}$.
Moreover,
$\gamma^1_R={(\begin{smallmatrix} 1&v\end{smallmatrix})}$
for some $\map vR{\ker(\sigma_R)}$.
Application of naturality of $\gamma$ and $\sigma$ to an $\map mRM$
then gives
$\gamma_M=\gamma_R\tensor_R M$ and $\sigma_M=\sigma_R\tensor_R M$.

\item
Let $\map iMN$ be a monomorphism.

The pushout of $i$ and $\gamma^0_M$ is
$N+V\tensor_R M$ and $i\star\gamma^0$ is the map
$\map%
{\bigl(\begin{smallmatrix} 1&0\\0&i_V\end{smallmatrix}\bigr)}
{N+V\tensor_R M}
{N+V\tensor_R N}$.
Therefore $i\star\gamma^0$ is a monomorphism iff $i_V$ is a monomorphism.
In particular, flatness of $V$ is necessary for $(\cyl_v,\gamma,\sigma)$
to be cartesian.

Now suppose $V$ is flat.
As seen above, $i\star\gamma^0$ is a monomorphism.
Because of
$
(\begin{smallmatrix} 1&v_M \end{smallmatrix})
=
(\begin{smallmatrix} 1&0 \end{smallmatrix})
\bigl(\begin{smallmatrix} 1&v_M \\ 0&1 \end{smallmatrix}\bigr)
$
the maps $\gamma^0_M$ and $\gamma^1_M$ differ only by an automorphism
of their codomain. Moreover, for any $\map fMN$ one has
$v_Mf_V = v\tensor_Rf = fv_N$ and hence
$
\bigl(\begin{smallmatrix} 1&v_M \\ 0&1 \end{smallmatrix}\bigr)
\bigl(\begin{smallmatrix} f&0 \\ 0&f_V \end{smallmatrix}\bigr)
=
\bigl(\begin{smallmatrix} f&0 \\ 0&f_V \end{smallmatrix}\bigr)
\bigl(\begin{smallmatrix} 1&v_N \\ 0&1 \end{smallmatrix}\bigr)
$.
Therefore these automorphisms are part of a natural automorphism
on the cylinder functor.
Consequently $i\star\gamma^1$ is the pushout of $i\star\gamma^0$
along an isomorphism and hence $i\star\gamma^1$ is also a monomorphism.

For $i\star\gamma$, it is enough to consider the special
case where $i$ is the inclusion $\ideal{a}\inclusion R$ of a
left ideal. Let $\map j{V\tensor_R \ideal{a}}V$ be the map
with $j(w\tensor a)=wa$.
The pushout $Q$ of $i$ and $\gamma_M$ can be calculated as the
cokernel in the exact row below
$$
\xymatrix{
0 \ar[r] & \ideal{a}+\ideal{a}
\ar[r]^-k
& R+R+\ideal{a}+V\tensor_R \ideal{a} \ar[r]
\ar[d]_h
& Q \ar[r] \ar[ld]^{i\star\gamma} &0 \\
&& R+V &&
}
$$
where
$$
k= \begin{pmatrix}
-i & 0 & 1 & 0\\ 0 & -i & 1 & v_{\ideal{a}}
\end{pmatrix}
\text{ \quad and \quad }
h= \begin{pmatrix}
1 & 1 & i & 0\\ 0 & v & 0 & j
\end{pmatrix}^{\top}
% h= \begin{pmatrix}
% 1 & 0\\ 1 & v\\ i & 0\\ 0 & j
% \end{pmatrix}
$$
and $i\star\gamma$ is induced by $h$ because $\image(k)\subseteq\ker(h)$.
To show that $i\star\gamma$ is a monomorphism, it remains to
verify $\ker(h)\subseteq\image(k)$.

Assume $(x,y,a,w)\in\ker(h)$ for some $x,y\in R$, $a\in\ideal{a}$ and
$w= \sum_n w_n\tensor b_n \in V\tensor_R \ideal{a}$.
This corresponds to equations $x+y+a=0$ and $-vy = \sum_n w_nb_n$.
Because $vR$ is a pure submodule of $V$, there are $r_n\in R$ with
$-vy = \sum_n vr_nb_n$. Since $v$ is a monomorphism, we have
$y = -\sum_n r_nb_n \in\ideal{a}$ and $x\in\ideal{a}$.
Therefore
$(x,y,a,w)= (-x,-y)\bigl(\begin{smallmatrix}
-i & 0 & 1 & 0\\ 0 & -i & 1 & v_{\ideal{a}}
\end{smallmatrix}\bigr)\in \image(k)$.

\item
Tensoring the split exact sequence
$$
\xymatrix{
0 \ar[r]
& V \ar[r]^-{(\begin{smallmatrix} 0 & 1 \end{smallmatrix})}
& R+V \ar[r]^-{\bigl(\begin{smallmatrix} 1 \\ 0 \end{smallmatrix}\bigr)}
& R \ar[r] & 0
}
$$
with $M$, we obtain $\ker(\sigma_M)=V\tensor_R M$ from which the equivalence
follows.\qedhere
\end{alphlist}
\end{proof}

\noindent
Observe that in the situation of \ref{prop:modules}(d),
two maps $\map {f,g}MN$ are homotopic iff $\map{g-f}MN$ factors
through some injective module. This relation is known as stable
equivalence (see e.g.~\cite{KURO05}*{Section~4}
or \cite{HOVE99}*{Definition~2.2.2}) and the homotopy equivalences
will then also be called stable equivalences.

\begin{corollary}
\label{cor:modules}
Let $(\cyl,\gamma,\sigma)$ be a final cartesian cylinder in ${}_RMod$
and suppose that the ring $R$ is injective.
Then each map in $\Lambda=\Lambda(\cyl,\emptyset,I)$ has injective domain
and codomain. In particular, every object is fibrant and
$\cww=\cww(\cyl,\emptyset,I)$ is the class of stable equivalences.
\end{corollary}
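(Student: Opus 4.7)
The plan is to establish by induction on the layers $\Lambda^n$ that every map in $\Lambda$ has injective domain and codomain; fibrancy of every object and the identification of $\cww$ with the stable equivalences will then follow quickly. Throughout, I use that $V\tensor_R M$ is injective for every $M$ (Proposition~\ref{prop:modules}(d)), that $R$ is injective by hypothesis, and that finite direct sums of injectives are injective.

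For the base layer $\Lambda^0$, note that since $S=\emptyset$ this set consists of maps $f\star\gamma^0$ and $f\star\gamma^1$ for $f\colon\ideal{a}\inclusion R$ in $I$. A short calculation identifies the pushout defining $f\star\gamma^0$ with $R\oplus(V\tensor_R\ideal{a})$ and its codomain with $\cyl(R)=R\oplus V$, both sums of injectives. For $f\star\gamma^1$ the automorphism $\bigl(\begin{smallmatrix}1&v_{\ideal{a}}\\0&1\end{smallmatrix}\bigr)$ of $\cyl(\ideal{a})$ exhibited in the proof of Proposition~\ref{prop:modules}(c) carries $\gamma^0_{\ideal{a}}$ to $\gamma^1_{\ideal{a}}$, so its domain is isomorphic to the previous one.

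For the inductive step, take $g\colon A\to B$ in $\Lambda^n$ with $A$ and $B$ injective. The codomain $\cyl(B)=B\oplus(V\tensor_R B)$ is again a sum of injectives. To compute the domain $Q$ of $g\star\gamma$, perform the change of variables $(a_1,a_2)\mapsto(a_1+a_2,a_2)$ on both $A+A$ and $B+B$; the square defining $Q$ then becomes a direct sum of two pushout squares, one formed from $g$ and the identity of $A$ (with pushout $B$), the other from $g$ and $v_A\colon A\to V\tensor_R A$. Since $A$ is injective, the monomorphism $g$ splits, so $B\cong A\oplus B'$ with $B'$ injective, and a direct computation reduces the second pushout to $B'\oplus(V\tensor_R A)$. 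Hence $Q\cong B\oplus B'\oplus(V\tensor_R A)$ is injective.

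With the injectivity claim proved, fibrancy of every object $X$ is immediate: any $g\colon A\to B$ in $\Lambda$ is a monomorphism out of the injective module $A$, hence splits, so any map $A\to X$ extends to $B\to X$ and the lifting against $(X\to 0)$ exists. Since every object is fibrant, Corollary~\ref{cor:heq} identifies $\cww$ with the class of homotopy equivalences, and Proposition~\ref{prop:modules}(a) together with the remark following it shows that in the present final-cartesian setting these are precisely the stable equivalences. The only nontrivial step is the pushout computation in the inductive case, but the change of coordinates reduces it cleanly to a situation where the splitting of monomorphisms out of injectives finishes the argument.
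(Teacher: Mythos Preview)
Your proof is correct and follows the same overall strategy as the paper: induction on the layers $\Lambda^n$, with the injectivity of the domain (guaranteed by the inductive hypothesis) providing the needed splitting. The base case is identical to the paper's.

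The only difference is in the mechanics of the inductive step. Where you perform a change of coordinates to decompose the pushout $Q$ explicitly as $B\oplus B'\oplus(V\tensor_R A)$, the paper observes more directly that $Q$ sits in a short exact sequence
\[
0 \longrightarrow M+M \longrightarrow N+N+M+(V\tensor_R M) \longrightarrow Q \longrightarrow 0,
\]
which splits because $M+M$ is injective; hence $Q$ is a direct summand of an injective and therefore injective. Both arguments hinge on the same fact (injectivity of the domain forces a splitting), so this is a cosmetic difference rather than a genuinely different route. Your version has the mild advantage of giving an explicit description of $Q$; the paper's is a line shorter.
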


\begin{proof}
By part (b) of Proposition~\ref{prop:modules} one can assume $\cyl=\cyl_v$
for some $\map vRV$.
Moreover, $\cyl_v$ preserves injective objects by part (d).
We prove by induction that each map in $\Lambda^n$ has injective
domain and codomain.

For an inclusion $\map i{\ideal{a}}R$ of a left ideal,
we already remarked in the proof of part (c) that $i\star\gamma^0$
and $i\star\gamma^1$ have isomorphic domains and also calculated
$\map {i\star\gamma^0}{R+V\tensor_R\ideal{a}}{R+V\tensor_R R}$.
Therefore every map in $\Lambda^0$ has injective domain and codomain.

Now assume that the claim holds for $\Lambda^n$
and let $\map fMN$ be a map in $\Lambda^n$. Then the codomain of
$f\star\gamma$ is $N+V\tensor_R N$, which is injective.
Its domain $Q$ is the cokernel of a split exact sequence
$$
\xymatrix{
0 \ar[r] & M+M \ar[r] & N+N+M+V\tensor_R M \ar[r] & Q \ar[r] & 0
}
$$
and is therefore also injective.
\end{proof}

\begin{example}
Let $H$ be a finite dimensional Hopf algebra over a field $k$, i.e.~a
(finite dimensional) $k$-algebra together with algebra maps
$\map\Delta H{H\tensor_k H}$ (comultiplication) and $\map\varepsilon Hk$
(counit), and an anti-algebra map $\map SHH$ (antipode) satisfying
certain conditions (for details see e.g.\ Montgomery~\cite{MONT93}).
${}_H\Mod$ has a model structure where the weak equivalences are
the stable equivalences
\cite{HOVE99}*{Theorem~2.2.12 and Proposition~4.2.15}
We will show that this model structure is left determined by
verifying the conditions of Proposition~\ref{prop:modules} and
Corollary~\ref{cor:modules}.

\begin{numlist}
\item
Due to results of Larson and Sweedler~\cite{LASW69}*{Theorem~2 (p79)
and Proposition~2 (p83)} on finite dimensional Hopf algebras over a field,
$H$ satisfies the following conditions:
\begin{alphlist}

\item the antipode $\map SHH$ is invertible.

\item there exists a nonzero $d\in H$ with $hd=\varepsilon(h)d$
for all $h\in H$.
Giving $k$ a left $H$-module structure via $\map\varepsilon Hk$, such a $d$
corresponds to a (nonzero) $H$-linear map $\map dkH$.

\item a left $H$-module is injective iff it is projective
\end{alphlist}
\item
Let $M$ and $N$ be two $H$-modules.
Then $M\tensor_k N$ has an $H\tensor_k H$-module structure with
$(c\tensor c')(m\tensor n)=cm\tensor c'n$.
Via the map $\map\Delta H{H\tensor_k H}$ this induces an $H$-module structure
on ${M\tensor_k N}$. Observe that with this definition
$k\tensor_k M \iso M \iso M\tensor_k k$ and for a two sided module $V$
also $M\tensor_k (V \tensor_H N) \iso (M\tensor_k V) \tensor_H N$
as $H$-modules.

Let $\Hom(M,N)$ be the group of all $k$-linear maps from $M$ to $N$.
Then $\Hom(M,N)$ has a $H\tensor_k H^{op}$-module structure with
$((c\tensor c')f)m=c(f(c'm))$. From this one obtains two different
$H$-module structures on $\Hom(M,N)$:

The first one is induced via
$\xymatrix@1{
H \ar[r]^-\Delta & H\tensor_k H \ar[r]^-{H\tensor S} & H\tensor_k H^{op}}$.
We write $\Hom^r(M,N)$ for this module structure.
The second one is induced via
$\xymatrix@1{
H \ar[r]^-\Delta & H\tensor_k H \ar[r]^-{tw}
& H\tensor_k H \ar[r]^-{\,H\tensor S^{-1}} & H\tensor_k H^{op}}$,
where $tw$ is defined by $tw(c\tensor c')=c'\tensor c$.
We write $\Hom^l(M,N)$ for this module structure.

Then one can verify that this gives bifunctors on ${}_H\Mod$ and
that for any given $M$, the $k$-linear evaluation maps
\newline
$\map {e_N}{\Hom^r(M,N)\tensor_k M}N$ and
$\map {e'_N}{M\tensor_k\Hom^l(M,N)}N$
defined by $e_N(f,m) = fm = e'_N(m,f)$
are indeed $H$-linear and provide counits of two adjunctions
$(-)\tensor_k M\adjoint\Hom^r(M,-)$ and
$M\tensor_k (-)\adjoint\Hom^l(M,-)$.

\item
We fix some $\map dkH$ as in (1b) above.
Set $V= H\tensor_k H$. Then $V$ is a two sided $H$-module.
Define $\map vHV$ by the composition
$\xymatrix@1{H \iso k\tensor_k H \ar[r]^-{d\tensor H} & H\tensor_k H}$.
Then this gives a map of two sided $H$-modules.

\item
Tensoring over the field $k$ with a fixed module preserves monomorphisms.
In particular the above $\map vHV$ is a monomorphism.
Moreover the natural isomorphisms $v\tensor_H(-) \iso d\tensor_k(-)$
and $V\tensor_H(-) \iso H\tensor_k(-)$ yield that
$\map vHV$ is pure and $V$ is flat.

\item
For a fixed module $M$, both $\Hom^l(M,-)$ and $\Hom^r(M,-)$
preserve epimorphisms. Therefore their left adjoints $M\tensor_k(-)$ and
$(-)\tensor_k M$ preserve projective $H$-modules.
In particular, $V\tensor_H M \iso H\tensor_k M$ is projective and
therefore injective.
\end{numlist}
\end{example}

\begin{bibdiv}
\begin{biblist}

\bib{AHRT02b}{article}{
  author={Ad{\'a}mek, Ji{\v {r}}\'\i },
  author={Herrlich, Horst},
  author={Rosick{\'y}, Ji{\v {r}}\'\i },
  author={Tholen, Walter},
  title={Weak factorization systems and topological functors},
  note={Papers in honour of the seventieth birthday of Professor Heinrich Kleisli (Fribourg, 2000)},
  journal={Appl. Categ. Structures},
  volume={10},
  date={2002},
  number={3},
  pages={237--249},
  issn={0927-2852},
  review={\MR {1916156 (2003i:18001)}},
}

\bib{ADRO94}{book}{
  author={Ad{\'a}mek, Ji{\v {r}}\'\i },
  author={Rosick{\'y}, Ji{\v {r}}\'\i },
  title={Locally presentable and accessible categories},
  series={London Mathematical Society Lecture Note Series},
  volume={189},
  publisher={Cambridge University Press},
  place={Cambridge},
  date={1994},
  pages={xiv+316},
  isbn={0-521-42261-2},
  review={\MR {1294136 (95j:18001)}},
}

\bib{BEKE00}{article}{
  author={Beke, Tibor},
  title={Sheafifiable homotopy model categories},
  journal={Math. Proc. Cambridge Philos. Soc.},
  volume={129},
  date={2000},
  number={3},
  pages={447--475},
  issn={0305-0041},
  review={\MR {1780498 (2001i:18015)}},
}

\bib{CISI02}{article}{
  author={Cisinski, Denis-Charles},
  title={Th\'eories homotopiques dans les topos},
  language={French, with English summary},
  journal={J. Pure Appl. Algebra},
  volume={174},
  date={2002},
  number={1},
  pages={43--82},
  issn={0022-4049},
  review={\MR {1924082 (2003i:18021)}},
}

\bib{HIRS03}{book}{
  author={Hirschhorn, Philip S.},
  title={Model categories and their localizations},
  series={Mathematical Surveys and Monographs},
  volume={99},
  publisher={American Mathematical Society},
  place={Providence, RI},
  date={2003},
  pages={xvi+457},
  isbn={0-8218-3279-4},
  review={\MR {1944041 (2003j:18018)}},
}

\bib{HOVE99}{book}{
  author={Hovey, Mark},
  title={Model categories},
  series={Mathematical Surveys and Monographs},
  volume={63},
  publisher={American Mathematical Society},
  place={Providence, RI},
  date={1999},
  pages={xii+209},
  isbn={0-8218-1359-5},
  review={\MR {1650134 (99h:55031)}},
}

\bib{JOHN77}{book}{
  author={Johnstone, Peter T.},
  title={Topos theory},
  note={London Mathematical Society Monographs, Vol. 10},
  publisher={Academic Press [Harcourt Brace Jovanovich Publishers]},
  place={London},
  date={1977},
  pages={xxiii+367},
  isbn={0-12-387850-0},
  review={\MR {0470019 (57 \#9791)}},
}

\bib{JOTI90}{article}{
  author={Joyal, Andr{\'e}},
  author={Tierney, Myles},
  title={Strong stacks and classifying spaces},
  conference={ title={Category theory}, address={Como}, date={1990}, },
  book={ series={Lecture Notes in Math.}, volume={1488}, publisher={Springer}, place={Berlin}, },
  date={1991},
  pages={213--236},
  review={\MR {1173014 (93h:18019)}},
}

\bib{KURO05}{article}{
  author={Kurz, Alexander},
  author={Rosick{\'y}, Ji{\v {r}}\'\i },
  title={Weak factorizations, fractions and homotopies},
  journal={Appl. Categ. Structures},
  volume={13},
  date={2005},
  number={2},
  pages={141--160},
  issn={0927-2852},
  review={\MR {2141595 (2006c:18001)}},
}

\bib{LASW69}{article}{
  author={Larson, Richard Gustavus},
  author={Sweedler, Moss Eisenberg},
  title={An associative orthogonal bilinear form for Hopf algebras},
  journal={Amer. J. Math.},
  volume={91},
  date={1969},
  pages={75--94},
  issn={0002-9327},
  review={\MR {0240169 (39 \#1523)}},
}

\bib{LARU06}{article}{
  author={L{\'a}russon, Finnur},
  title={The homotopy theory of equivalence relations},
  date={2006-12},
  eprint={arXiv:math/0611344v1 [math.AT]},
}

\bib{MACL98}{book}{
  author={Mac Lane, Saunders},
  title={Categories for the working mathematician},
  series={Graduate Texts in Mathematics},
  volume={5},
  edition={2},
  publisher={Springer-Verlag},
  place={New York},
  date={1998},
  pages={xii+314},
  isbn={0-387-98403-8},
  review={\MR {1712872 (2001j:18001)}},
}

\bib{MATS89}{book}{
  author={Matsumura, Hideyuki},
  title={Commutative ring theory},
  series={Cambridge Studies in Advanced Mathematics},
  volume={8},
  edition={2},
  note={Translated from the Japanese by M. Reid},
  publisher={Cambridge University Press},
  place={Cambridge},
  date={1989},
  pages={xiv+320},
  isbn={0-521-36764-6},
  review={\MR {1011461 (90i:13001)}},
}

\bib{MONT93}{book}{
  author={Montgomery, Susan},
  title={Hopf algebras and their actions on rings},
  series={CBMS Regional Conference Series in Mathematics},
  volume={82},
  publisher={Published for the Conference Board of the Mathematical Sciences, Washington, DC},
  date={1993},
  pages={xiv+238},
  isbn={0-8218-0738-2},
  review={\MR {1243637 (94i:16019)}},
}

\bib{REZK96}{article}{
  author={Rezk, Charles},
  title={A Model Category for Categories},
  date={1996-11},
  eprint={http://www.math.uiuc.edu/~rezk/cat-ho.dvi},
}

\bib{ROSI05}{article}{
  author={Rosick{\'y}, Ji{\v {r}}\'\i },
  title={Generalized Brown representability in homotopy categories},
  journal={Theory Appl. Categ.},
  volume={14},
  date={2005},
  pages={no. 19, 451--479 (electronic)},
  issn={1201-561X},
  review={\MR {2211427 (2007c:18009)}},
}

\bib{ROSI07}{article}{
  author={Rosick{\'y}, Ji{\v {r}}\'\i },
  title={On combinatorial model categories},
  date={2007-08},
  eprint={arXiv:0708.2185v1 [math.CT]},
}

\bib{ROTH03}{article}{
  author={Rosick{\'y}, Ji{\v {r}}\'\i },
  author={Tholen, Walter},
  title={Left-determined model categories and universal homotopy theories},
  journal={Trans. Amer. Math. Soc.},
  volume={355},
  date={2003},
  number={9},
  pages={3611--3623 (electronic)},
  issn={0002-9947},
  review={\MR {1990164 (2004e:55023)}},
}

\end{biblist}
\end{bibdiv}
\end{document}